\setlist[description]{font=\normalfont\scshape}
\xpatchcmd{\proof}{\itshape}{\normalfont\bfseries}{}{}
\newtheoremstyle{repeat}{}{}{\itshape}{}{\bfseries}{.}{.5em}{#3, repeated}
\newtheorem{theorem}{Theorem}[section]
\newtheorem{proposition}[theorem]{Proposition}
\newtheorem{lemma}[theorem]{Lemma}
\newtheorem{corollary}[theorem]{Corollary}
\newtheorem{fact}[theorem]{Fact}
\theoremstyle{definition}
\newtheorem{definition}[theorem]{Definition}
\newtheorem{remark}[theorem]{Remark}
\newtheorem{convention}[theorem]{Convention}
\newtheorem{example}[theorem]{Example}
\newtheorem{question}[theorem]{Question}
\theoremstyle{repeat}
\newtheorem*{repeated-theorem}{Repeat}
\newcommand{\A}{\mathcal{A}}
\newcommand{\C}{\mathcal{C}}
\newcommand{\M}{\mathcal{M}}
\renewcommand{\L}{\mathcal{L}}
\newcommand{\R}{\mathbb{R}}
\DeclareMathOperator{\tp}{tp}
\DeclareMathOperator{\qftp}{qftp}
\DeclareMathOperator{\gtp}{gtp}
\DeclareMathOperator{\dcl}{dcl}
\DeclareMathOperator{\Hom}{Hom}
\DeclareMathOperator{\colim}{colim}
\DeclarePairedDelimiter{\linspan}{\langle}{\rangle}
\newcommand{\Bil}{\mathbf{Bil}}
\newcommand{\Bils}{\mathbf{Bil}^\textup{s}}
\newcommand{\Bila}{\mathbf{Bil}^\textup{a}}
\renewcommand{\Vec}{\mathbf{Vec}}
\renewcommand{\phi}{\varphi}
\newcommand{\ACF}{{\textup{ACF}}}
\newcommand{\ec}{{\textup{ec}}}
\def\Ind#1#2{#1\setbox0=\hbox{$#1x$}\kern\wd0\hbox to 0pt{\hss$#1\mid$\hss}
\lower.9\ht0\hbox to 0pt{\hss$#1\smile$\hss}\kern\wd0}
\def\ind{\mathop{\mathpalette\Ind{}}}
\def\Notind#1#2{#1\setbox0=\hbox{$#1x$}\kern\wd0\hbox to 0pt{\mathchardef
\nn="3236\hss$#1\nn$\kern1.4\wd0\hss}\hbox to 0pt{\hss$#1\mid$\hss}\lower.9\ht0
\hbox to 0pt{\hss$#1\smile$\hss}\kern\wd0}
\def\nind{\mathop{\mathpalette\Notind{}}}
\title{Bilinear spaces over a fixed field are simple unstable}
\author{Mark Kamsma}
\email{mark@markkamsma.nl}
\urladdr{https://markkamsma.nl}
\address{School of Mathematics, University of East Anglia, Norwich, Norfolk, NR4 7TJ, UK}
\date{\today. \emph{MSC2020}: Primary: 03C45; secondary: 03C10}
\keywords{bilinear form; simple theory; positive logic; independence relation}
\thanks{This project was supported by EPSRC grant EP/W522314/1.}
\begin{document}

% Abstract
\begin{abstract}
We study the model theory of vector spaces with a bilinear form over a fixed field. For finite fields this can be, and has been, done in the classical framework of full first-order logic. For infinite fields we need different logical frameworks. First we take a category-theoretic approach, which requires very little set-up. We show that linear independence forms a simple unstable independence relation. With some more work we then show that we can also work in the framework of positive logic, which is much more powerful than the category-theoretic approach and much closer to the classical framework of full first-order logic. We fully characterise the existentially closed models of the arising positive theory. Using the independence relation from before we conclude that the theory is simple unstable, in the sense that dividing has local character but there are many distinct types. We also provide positive version of what is commonly known as the Ryll-Nardzewski theorem for $\omega$-categorical theories in full first-order logic, from which we conclude that bilinear spaces over a countable field are $\omega$-categorical.
\end{abstract}

% Title
\maketitle

% Table of contents
\setcounter{tocdepth}{1} % Only show sections
\tableofcontents

\section{Introduction}
\label{sec:introduction}
Vector spaces with a bilinear form, or \emph{bilinear spaces} as we will call them, appear in many different places in mathematics. Examples include inner product spaces (such as Hilbert spaces) and symplectic spaces arising from symplectic geometry. The model theory of bilinear spaces has been studied using various approaches. One approach is to study $K$-bilinear spaces over some fixed finite field $K$ \cite{kantor_aleph_0-categorical_1989, cherlin_finite_2003}. As $K$ is finite, its elements can simply be named in the signature (e.g.\ as constants). The arising theory turns out to be simple unstable. In \cite{granger_stability_1999} another approach is taken to study bilinear spaces over infinite fields. Their set-up is to consider a bilinear space as a two-sorted structure, with one vector space sort $V$ and one field sort $K$. They prove that the arising theory is non-simple. Later it was shown in \cite{chernikov_model-theoretic_2016} that the theory is NSOP$_1$ when $K$ is algebraically closed.

When studying the model theory of vector spaces we generally take a one-sorted approach, where scalar multiplication is coded by introducing a unary function symbol for each field element. The arising theory is well known to be very well-behaved (i.e.\ stable), regardless of the field. In contrast, when taking a two-sorted approach the arising theory will be at least as complicated as the theory of the field sort. So by making the field part of the language, and thus fixing it, it no longer complicates the arising theory. The main idea of this paper is to do something similar for bilinear spaces, namely fix the field and then study its model theory.

The main problem with the classical first-order approach to vector spaces with a bilinear form $[\cdot, \cdot]$ over some infinite field $K$, such as the approach in \cite{granger_stability_1999}, is the strength of compactness. If $[x, y] = \lambda$ is definable for every $\lambda \in K$, as it should be in any reasonable signature, then the set $\{[x, y] \neq \lambda : \lambda \in K\}$ has a realisation. This means that either the field needs to vary with the models, as in the approach in \cite{granger_stability_1999}, or there are models where the bilinear form $[\cdot, \cdot]$ is incomplete. In this paper we sidestep these issues by working in different logical frameworks. We first take a category-theoretic approach. This is a very general framework that requires very little set-up. Then we consider the more powerful setting of positive logic, which is much closer to the classical framework of full first-order logic, but still allows us to fix the field $K$ to be any field we like.

Independence relations are a central tool in determining where a theory belongs in Shelah's stability hierarchy, at least in the class of NSOP theories. General theory for simple independence relations has been developed for positive logic \cite{pillay_forking_2000, ben-yaacov_simplicity_2003, ben-yaacov_thickness_2003} and for the category-theoretic approach \cite{kamsma_kim-pillay_2020}. These results are roughly of the form ``a given theory/category can only have one nice enough independence relation (the \emph{canonical independence relation}), which reveals its place in the stability hierarchy''. In \cite{haykazyan_existentially_2021} such tools are used to study the positive theory of exponential fields. In this paper we will, in a similar way, employ these tools to show that linear independence is the canonical independence relation in bilinear spaces over a fixed field, and that this implies simplicity and non-stability.

\textbf{Main results.} We study the category $\Bil_K$ of $K$-bilinear spaces with bilinear monomorphisms (injective linear maps that respect the bilinear form). We write $\Bils_K$ and $\Bila_K$ for the full subcategories of symmetric and alternating $K$-bilinear spaces respectively, and we write $\Bil^*_K$ when we mean any of these three categories. To study the model-theoretic behaviour of bilinear spaces using a category-theoretic approach we use the framework of \emph{abstract elementary categories}, or \emph{AECats}, from \cite{kamsma_kim-pillay_2020}, see section \ref{subsec:preliminaries-aecats} for the relevant details.
\begin{theorem}
\thlabel{thm:category-of-bilinear-spaces-is-simple-aecat}
The category $\Bil^*_K$ is an AECat with the amalgamation property that has a canonical simple unstable independence relation $\ind$ given by linear independence.
\end{theorem}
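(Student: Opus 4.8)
The plan is to verify the AECat axioms for $\Bil^*_K$ and then to check that linear independence satisfies the axioms of a canonical simple independence relation as set out in the Kim--Pillay-style characterisation of \cite{kamsma_kim-pillay_2020}. Let me sketch each part.

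The plan is to verify the AECat axioms for $\Bil^*_K$, establish a free amalgamation lemma for bilinear forms, and then check that linear independence satisfies the hypotheses of the Kim--Pillay-style theorem for AECats from \cite{kamsma_kim-pillay_2020}; this will simultaneously identify it as the canonical independence relation and yield simplicity, while non-stability will come from an explicit failure of stationarity. Throughout one may take $\M = \C = \Bil^*_K$, since the category-theoretic framework imposes no completeness requirement on the form.

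For the AECat structure: every morphism is an injective linear map and hence a monomorphism; a directed colimit is the directed union of bilinear spaces, with the form on the union uniquely determined, so directed colimits exist and are computed as unions; and $\Bil^*_K$ is finitely accessible with the finite-dimensional bilinear spaces as the finitely presentable objects, since each bilinear space is the directed union of its finite-dimensional subspaces. The real work is the amalgamation property. Given $A \hookrightarrow B$ and $A \hookrightarrow C$, fix vector-space complements $B = A \oplus B_0$ and $C = A \oplus C_0$; on $D := A \oplus B_0 \oplus C_0$ there is a unique bilinear form restricting to those of $B$ and $C$ and with $[B_0, C_0] = [C_0, B_0] = 0$, and a direct check shows that $D$ is symmetric (resp.\ alternating) whenever $B$ and $C$ are, so $D \in \Bil^*_K$ amalgamates $B$ and $C$ over $A$. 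Taking $A = 0$ gives joint embedding. An essentially identical bookkeeping argument handles three-amalgamation: given bilinear spaces $\linspan{abC}$, $\linspan{acC}$, $\linspan{bcC}$ with matching restrictions to $\linspan{aC}$, $\linspan{bC}$, $\linspan{cC}$ and with $a, b, c$ linearly independent over $C$, specifying the form on a spanning set of $\linspan{abcC}$ by the three given pieces is consistent precisely because those pieces agree on overlaps, and preservation of symmetry / the alternating law on all of $\linspan{abcC}$ (not just on the generating set) is a short computation.

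Now define $A \ind_C B$ inside an ambient object to mean $\linspan{AC} \cap \linspan{BC} = \linspan{C}$, i.e.\ linear disjointness over $C$. Invariance, (left and right) monotonicity, normality, base monotonicity, transitivity, symmetry and finite character are standard facts about the modular lattice of subspaces of a vector space; existence and extension follow from the amalgamation lemma, since a copy of $A$ can always be placed linearly disjoint from the right-hand side over $C$; and local character holds because, given $A$ with $|A| + \aleph_0 = \kappa$ and any $C$, a subset $C_0 \subseteq C$ of size $\le \kappa$ spanning $\linspan{A} \cap \linspan{C}$ satisfies $A \ind_{C_0} C$. Finally, the independence theorem over an arbitrary base is exactly an instance of the three-amalgamation lemma, linear disjointness of the data being precisely what makes the underlying linear amalgamation problem free. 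By the canonicity theorem of \cite{kamsma_kim-pillay_2020}, $\ind$ is therefore the canonical independence relation and $\Bil^*_K$ is simple.

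It remains to see that $\ind$ is not stable, i.e.\ that stationarity fails over some model. Fix $M \in \Bil^*_K$ and let $N = M \oplus \linspan{e}$ with $e$ orthogonal to $M$ and $[e,e] = 0$. The type over $M$ of a new vector orthogonal to $M$ with square zero is realised, in suitable extensions, both by some $v_1$ with $[v_1, e] = 0$ and by some $v_2$ with $[v_2, e] \ne 0$, each linearly independent from $N$ over $M$; then $\tp(v_1/M) = \tp(v_2/M)$ and $v_1 \ind_M N$, $v_2 \ind_M N$, but $\tp(v_1/N) \ne \tp(v_2/N)$. Hence the canonical independence relation is unstable; concretely this reflects the fact that there are many types, e.g.\ $|K|^{\dim M}$ distinct $1$-types over an infinite-dimensional $M$ obtained by varying the pairings with a basis. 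The only genuinely non-routine step is the bilinear-form amalgamation, ordinary and three-fold: one must check that freely chosen cross-pairings are mutually consistent and that symmetry or the alternating condition survives on the whole amalgam rather than merely on a generating set. Everything else is either standard linear algebra or a direct appeal to \cite{kamsma_kim-pillay_2020}.
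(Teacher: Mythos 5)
Your proposal is correct and follows essentially the same route as the paper: the AECat structure via finite accessibility and directed colimits computed as unions, free (zero cross-pairing) amalgamation giving the amalgamation property and \textsc{Extension}, \textsc{3-amalgamation} by the same basis bookkeeping, canonicity and simplicity from the Kim--Pillay-style theorem of \cite{kamsma_kim-pillay_2020}, and unstability via a failure of \textsc{Stationarity} that is essentially the paper's own example (two vectors with the same Galois type over the base but different pairings against a new vector). The only place you are lighter than the paper is \textsc{3-amalgamation}, which must hold for arbitrary $V_1, V_2, V_3$ containing the corner data, not just for the subspaces they generate; the paper handles this by extending the independent sets to bases of each $V_i$ before setting the remaining cross-pairings to zero, and your bookkeeping argument extends verbatim to that setting.
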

We then move to positive logic and define a theory $T_K$ for $K$-bilinear spaces, again with the notation $T_K^s$ and $T_K^a$ for the symmetric and alternating cases respectively and $T_K^*$ for any of these theories. The signature $\L_K$ will be the standard signature for $K$-vector spaces, i.e.\ where we have a unary function symbol for scalar multiplication for each $\lambda \in K$, a symbol $\neq$ for inequality and the bilinear form is coded by binary relation symbols of the form $[x, y] = \lambda$ for every $\lambda \in K$. The important part is that $[x, y] \neq \lambda$ will not be positively definable when $K$ is infinite, resulting in the fact that the e.c.\ (existentially closed) models are $K$-bilinear spaces. In fact, we fully characterise the e.c.\ models.
\begin{theorem}
\thlabel{thm:ec-models}
The following are equivalent for an $\L_K$-structure $V$:
\begin{enumerate}[label=(\roman*)]
\item $V$ is an e.c.\ model of $T_K^*$,
\item $V$ is a finitely injective $K$-bilinear space,
\item $V$ is an infinite dimensional non-degenerate finitely injective $K$-bilinear space.
\end{enumerate}
If the theory $T_K^*$ in (i) is $T_K^s$ or $T_K^a$ then the conditions in (ii) and (iii) should be further restricted to symmetric or alternating $K$-bilinear spaces respectively.
\end{theorem}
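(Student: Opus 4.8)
The plan is to establish the cycle (i)~$\Rightarrow$~(ii)~$\Rightarrow$~(iii)~$\Rightarrow$~(i). Of these, (iii)~$\Rightarrow$~(ii) is immediate, and (ii)~$\Rightarrow$~(iii) is easy: if $V$ is a finitely injective $K$-bilinear space, iterating the extension property along a chain of finite-dimensional spaces of increasing dimension shows $\dim V = \infty$, and if some nonzero $v$ lay in the radical, then applying finite injectivity to the embedding of $\langle v \rangle$ into a hyperbolic pair $\langle v, w \rangle$ with $[v,w] = 1$ would produce $w' \in V$ with $[v, w'] = 1$, contradicting $v \in \operatorname{rad}(V)$. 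So the real work is (i)~$\Rightarrow$~(ii) and (iii)~$\Rightarrow$~(i).

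For (i)~$\Rightarrow$~(ii) I would first check that an e.c.\ model $V$ of $T_K^*$ is genuinely a $K$-bilinear space of the appropriate type. Every model of $T_K^*$ is a $K$-vector space carrying a single-valued, bi-additive, bi-homogeneous \emph{partial} form that respects symmetry/alternation where defined, since all the defining axioms apart from totality are h-inductive; and since every such structure embeds into one whose form is total (extend the partial form, e.g.\ by choosing compatible values on a basis), existential closedness transfers the atomic fact $[a,b] = \lambda$ back to $V$, so the form on $V$ is total. Now let $f \colon W_0 \to V$ be a bilinear monomorphism with $W_0 \subseteq W_1$ finite-dimensional of the relevant type. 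Form the orthogonal amalgam $V' = V \oplus_{W_0} W_1$, declaring a complement of $W_0$ in $V$ orthogonal to a complement of $W_0$ in $W_1$; one checks $V'$ is again a $K$-bilinear space of the same type, hence a model of $T_K^*$ receiving a homomorphism from $V$. Picking a basis $e_1, \dots, e_n$ of $W_1$ extending a basis $e_1, \dots, e_k$ of $W_0$, the images of the $e_i$ in $V'$ satisfy, over the parameters $f(e_1), \dots, f(e_k) \in V$, a positive formula $\psi$ recording all the required bilinear-form relations; existential closedness realises $\psi$ in $V$, and the resulting linear map $W_1 \to V$ extends $f$ and preserves the form.

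The main obstacle is precisely that this last map need not be injective: linear independence is a conjunction of inequations $\neq 0$ and so is not positively expressible, so $\psi$ cannot force it. I would get around this by enlarging $W_1$: every finite-dimensional bilinear space of a given type embeds into a finite-dimensional \emph{non-degenerate} one $W_2$ of the same type (adjoin to $W_1$ a space dual to its radical), with $W_0 \subseteq W_1 \subseteq W_2$; and any form-preserving linear map out of a non-degenerate space has kernel contained in the radical, hence is injective. So it suffices to run the previous paragraph with $W_2$ in place of $W_1$: this produces a bilinear monomorphism $W_2 \to V$ extending $f$, and composing with $W_1 \hookrightarrow W_2$ gives the one we want.

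Finally, (iii)~$\Rightarrow$~(i); by the first paragraph it suffices to assume (ii), so let $V$ be a finitely injective $K$-bilinear space and $h \colon V \to W$ a homomorphism into a model of $T_K^*$. Non-degeneracy of $V$ together with bilinearity (in particular $[0,y]=0$) and single-valuedness of the form forces $\ker h = 0$, and totality of the form on $V$ forces $h$ to reflect the relations $[x,y] = \lambda$; thus $h$ identifies $V$ with a total substructure of $W$. To see $h$ is an immersion, suppose $W \models \phi(h\bar a)$ for a positive $\phi$ and $\bar a$ from $V$, witnessed by a finite tuple $\bar b$; then $\langle h\bar a, \bar b \rangle$ spans a finite-dimensional partial bilinear space, which embeds into a finite-dimensional total $K$-bilinear space $U$ of the right type, inside which $\langle h\bar a \rangle \cong \langle \bar a \rangle_V$. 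Applying finite injectivity to $\langle \bar a \rangle_V \cong \langle h\bar a \rangle \hookrightarrow U$ and $\langle \bar a \rangle_V \hookrightarrow V$ transports $\bar b$ to a tuple of $V$ witnessing $\phi(\bar a)$ there; no linear-independence issue arises now, since the configuration already lives in the genuine bilinear space $U$. Hence $V$ is e.c., closing the cycle.
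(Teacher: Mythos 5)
Your overall skeleton matches the paper's (e.c.\ models are bilinear spaces; e.c.-ness plus orthogonal amalgamation, i.e.\ \thref{prop:independent-amalgamation}, gives finite injectivity; finite injectivity gives back existential witnesses), but there is one genuine gap, and it sits exactly at the technical heart of the theorem. You twice assert that a partial bilinear form arising from a model of $T_K^*$ extends to a total form of the right type: first for the e.c.\ model $V$ itself (``extend the partial form, e.g.\ by choosing compatible values on a basis''), and again in your (ii)$\Rightarrow$(i), where the span $\langle h\bar a,\bar b\rangle$ inside an arbitrary model $W$ of $T_K^*$ is to be embedded into a finite-dimensional total space $U$. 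The basis-choice justification does not work as stated: the partial form is defined on an arbitrary set of pairs, not on pairs of basis vectors, so the given values impose linear constraints on the unknown basis values, and one must prove this system is solvable --- equivalently, that whenever $\sum_i c_i(a_i\otimes b_i)=0$ in $V\otimes_K V$ (modulo the symmetric/alternating relations in those cases) with each $[a_i,b_i]$ defined, then $\sum_i c_i[a_i,b_i]=0$. This consistency is true, but only because $T_K^*$ contains, for every shape of linear dependence and every incompatible tuple of scalars, the h-universal sentence forbidding that configuration in bilinear spaces of the given type; some argument of this kind must be supplied, and it is precisely the role played in the paper by \thref{lem:negation-of-bilinear-value} (used in \thref{lem:ec-is-bilinear}). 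As written, your proof assumes this content rather than proving it; once the extension lemma is proved, the rest of your argument goes through.

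Apart from that, your route is sound and genuinely different from the paper's at one point, in a nice way: in (i)$\Rightarrow$(ii) you force injectivity of the extension map not via a positive formula defining linear independence, but by first embedding $W_1$ into a finite-dimensional non-degenerate hull $W_2$ of the same type and using that a form-preserving linear map out of a non-degenerate space has trivial kernel. That is correct and avoids the paper's $\theta_n$ from \thref{prop:formula-for-linear-independence} altogether (the paper needs $\theta_n$ for later results anyway). Two small corrections: your remark that linear independence ``is a conjunction of inequations $\neq 0$ and so is not positively expressible'' is off --- $\neq$ is part of $\L_K$, so inequations are atomic, and in e.c.\ models linear independence \emph{is} positively definable by $\theta_n$; the true obstacle is only that it is an infinite conjunction of such atomic conditions for infinite $K$, so your quantifier-free $\psi$ cannot force it. Likewise, injectivity of homomorphisms is automatic from $\neq$ being in the signature, so the appeal to non-degeneracy at the start of your (ii)$\Rightarrow$(i) is unnecessary, though harmless.
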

We then prove some results about $T_K^*$ that are useful for technical reasons. We prove that the quantifier-free part of a type determines the entire type (\thref{thm:quantifier-free-type-determines-type}), but $T_K^*$ has positive quantifier elimination precisely when $K$ is finite (\thref{thm:quantifier-elimination}). If $K$ is finite then $T_K^*$ is Boolean (every full first-order formula is equivalent to a positive formula), so we are essentially back in the classical first-order case mentioned earlier in this introduction. However, when $K$ is infinite then $T_K^*$ cannot be Boolean (not even \emph{Hausdorff}, see \thref{def:hausdorff}), but we still have that equality of types is type-definable (i.e.\ \emph{semi-Hausdorff}, see \thref{def:hausdorff}). Putting everything together we can cast \thref{thm:category-of-bilinear-spaces-is-simple-aecat} in the setting of positive logic.
\begin{theorem}
\thlabel{thm:positive-theory-simple-unstable}
The theory $T_K^*$ is simple unstable, and non-dividing coincides with linear independence. That is, for every $\bar{a}$, $\bar{b}$ and $C$ in some e.c.\ model $M$ we have $\bar{a} \ind_C^M \bar{b}$ iff $\tp(\bar{a}/C\bar{b})$ does not divide over $C$. 
\end{theorem}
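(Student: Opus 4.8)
The plan is to transport \thref{thm:category-of-bilinear-spaces-is-simple-aecat} across the bridge supplied by \thref{thm:ec-models} and \thref{thm:quantifier-free-type-determines-type}. Recall that the e.c.\ models of a positive theory, together with the immersions between them, always form an AECat (this is part of the set-up of \cite{kamsma_kim-pillay_2020}); write $\mathcal{E}_K^*$ for the one attached to $T_K^*$. By \thref{thm:ec-models} its objects are exactly the infinite-dimensional non-degenerate finitely injective $K$-bilinear spaces. For the morphisms the key point is that, since qf-types determine types in $T_K^*$ by \thref{thm:quantifier-free-type-determines-type}, a map between e.c.\ models is an immersion if and only if it is a bilinear monomorphism: an immersion preserves and reflects all positive formulas, in particular all atomic ones, so it is an injective linear map respecting the form; conversely a bilinear monomorphism preserves and reflects qf-types, hence -- qf-types determining types -- all positive formulas. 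Thus $\mathcal{E}_K^*$ is equivalent to the full subcategory of $\Bil^*_K$ on the infinite-dimensional non-degenerate finitely injective spaces.

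Next I would identify the relevant monster objects. The canonical independence relation $\ind$ of $\Bil^*_K$ from \thref{thm:category-of-bilinear-spaces-is-simple-aecat} is read off from Galois types over subsets of a fixed monster object $\MM$ of $\Bil^*_K$; by \thref{thm:ec-models} such an $\MM$ is the same thing as a monster model of $T_K^*$ (a sufficiently rich and homogeneous $K$-bilinear space is infinite-dimensional, non-degenerate and finitely injective, and conversely a monster model of $T_K^*$ is universal and homogeneous in $\Bil^*_K$ -- homogeneity using \thref{thm:quantifier-free-type-determines-type} again to see that isomorphisms of small subspaces are elementary in the positive sense). Fixing a common such $\MM$, the Galois type of a tuple over $C \subseteq \MM$ determines and is determined by the positive type $\tp(\bar a/C)$, since $T_K^*$ is semi-Hausdorff (\thref{thm:quantifier-free-type-determines-type}). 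Under this identification the canonical independence relation of $\Bil^*_K$ becomes an independence relation on the positive types of $T_K^*$, namely linear independence, and it is still simple and unstable, since those are properties of the relation unchanged by the translation.

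Finally I would invoke the positive-logic Kim--Pillay analysis to pass from ``there is a canonical independence relation'' to ``the theory is simple and that relation is non-dividing''. Concretely, the axioms for a simple independence relation (invariance, monotonicity, symmetry, transitivity, finite and local character, extension, and the independence theorem over the appropriate boundedly closed base sets) that linear independence is shown to satisfy in the proof of \thref{thm:category-of-bilinear-spaces-is-simple-aecat} are exactly the hypotheses of the positive Kim--Pillay theorem (\cite{kamsma_kim-pillay_2020}; see also \cite{pillay_forking_2000, ben-yaacov_simplicity_2003, ben-yaacov_thickness_2003}), whose conclusion is that $T_K^*$ is simple with non-dividing equal to $\ind$. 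Instability likewise transfers from \thref{thm:category-of-bilinear-spaces-is-simple-aecat}; alternatively it can be seen directly, since over an infinite-dimensional $C$ with basis $(c_i)_i$ the partial types $\{[x, c_i] = \lambda_i : i\}$ are consistent (by finite injectivity and non-degeneracy), hence realised in $\MM$, for every choice of scalars $\lambda_i \in K$, and they are pairwise contradictory, yielding $|K|^{|C|}$ distinct positive $1$-types over $C$ -- far too many for a stable theory.

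The step I expect to be the main obstacle is the second one: pinning down the identification of the monster objects and of Galois types with positive types, and verifying that the precise list of axioms established for \thref{thm:category-of-bilinear-spaces-is-simple-aecat} matches the list consumed by the positive Kim--Pillay theorem (possible mismatches -- the independence theorem over models versus over bounded closures, base monotonicity, and so on -- are where the care is needed). Once these bookkeeping points are settled, both the simplicity transfer and the conclusion that non-dividing equals $\ind$ are essentially formal.
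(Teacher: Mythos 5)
Your proposal is correct and follows essentially the same route as the paper: it applies the positive-logic simplicity theorem (\thref{fact:kim-pillay-positive-logic}, i.e.\ Ben-Yaacov's Theorem 1.51) to linear independence, whose required properties are exactly those verified in section \ref{sec:independence-relation}, and proves instability by the same counting of pairwise inconsistent partial types $\{[x, c_i] = \lambda_i\}$ over a set of linearly independent vectors. The extra scaffolding you build (monster models, the explicit equivalence between the AECat of e.c.\ models and a subcategory of $\Bil^*_K$) is harmless but not needed; the paper deliberately avoids monster models and passes from Galois types to positive types implicitly via \thref{thm:ec-models} and \thref{thm:quantifier-free-type-determines-type}.
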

In the above $\ind$ is the relation given by linear independence, see \thref{def:linear-independence}. Dividing, simplicity, unstability and the type $\tp(\bar{a}/C\bar{b})$ should all be read in the sense of positive logic, see \thref{def:type} and the definitions at the end of section \ref{sec:bilinear-spaces-in-positive-logic}.

We also extend \cite[Theorem 6.5]{haykazyan_spaces_2019} to \thref{thm:omega-categoricity}, resulting in a characterisation of $\omega$-categorical positive theories (i.e.\ theories where there is exactly one countable e.c.\ model, up to isomorphism). This includes a positive variant of being an isolated type. Using this reformulation we then conclude that $T_K^*$ is $\omega$-categorical (for countable $K$), see \thref{cor:bilinear-spaces-omega-categorical}.

\textbf{Overview.} We start with some preliminaries in section \ref{sec:preliminaries} about bilinear spaces, the category-theoretic framework of AECats and positive logic. Then in section \ref{sec:independence-relation} we establish the properties that make linear independence a simple unstable independence relation in bilinear spaces (over a fixed field). In section \ref{sec:bilinear-spaces-in-positive-logic} we study the positive theory of bilinear spaces over a fixed field. In the stand-alone section \ref{sec:omega-categoricity} we prove a theorem characterising $\omega$-categorical theories. Finally, in section \ref{sec:comparison-to-different-approaches} we discuss two other model-theoretic approaches to certain bilinear spaces, and compare them to our approach: Hilbert spaces in continuous logic and the approach of \cite{granger_stability_1999} to bilinear spaces over an infinite field.

\textbf{Acknowledgements.} I would like to thank Jan Dobrowolski, Jonathan Kirby and Rosario Mennuni for their feedback on earlier versions of this paper. I would also like to thank the anonymous referee for their comments which have helped improve the presentation of this paper.
\section{Preliminaries}
\label{sec:preliminaries}
We discuss some preliminaries about bilinear spaces and the logical frameworks of AECats and positive logic. All claims made in this section are well-known and are either trivial or can be found in the references given at the start of each subsection.

Lowercase letters such as $a, b, c$ and $x, y, z$ will generally denote single elements or variables. We write $\bar{a}$ to mean a (possibly empty, possibly infinite) tuple. The exact length of a tuple often does not matter, so we write $\bar{a} \in A$ instead of $\bar{a} \in A^n$. We write unions in juxtaposition, so $AB$ means $A \cup B$.

\subsection{Bilinear spaces}
\label{subsec:bilinear-spaces}
Throughout this subsection we fix a field $K$. We will drop the $K$ from any names and terms, e.g.\ we write ``vector space'' instead of ``$K$-vector space''.
\begin{definition}
\thlabel{def:bilinear-space}
Let $V$ be a vector space. A \emph{bilinear form} is a map $[\cdot, \cdot]: V \times V \to K$ that is linear in each argument. That is, for all $x,y,z \in V$ and $\lambda \in K$:
\begin{itemize}
\item $[x, y + z] = [x, y] + [x, z]$ and $[x, \lambda y] = \lambda [x, y]$,
\item $[x + y, z] = [x, z] + [y, z]$ and $[\lambda x, y] = \lambda [x, y]$.
\end{itemize}
A \emph{bilinear space} is a vector space equipped with a bilinear form.
\end{definition}
For any $A \subseteq V$ we write $\linspan{A}$ for the linear span of $A$. If $V$ is a bilinear space we naturally view $\linspan{A}$ as a bilinear space by restricting the bilinear form.
\begin{definition}
\thlabel{def:symmetric-alternating}
Let $V$ be a bilinear space, we call $V$ or the bilinear form on $V$:
\begin{itemize}
\item \emph{symmetric} if $[x, y] = [y, x]$ for all $x,y \in V$;
\item \emph{alternating} if $[x, x] = 0$ for all $x \in V$.
\end{itemize}
\end{definition}
Note that in an alternating bilinear space $V$ we always have $[x, y] = -[y, x]$ for all $x,y \in V$, this is easily seen by expanding $[x+y, x+y]$.
\begin{example}
\thlabel{ex:bilinear-spaces}
We give some common examples of bilinear spaces.
\begin{enumerate}[label=(\roman*)]
\item For any field $K$ we can consider the $n$-dimensional vector space $K^n$ with the dot product as bilinear form, which yields a symmetric $K$-bilinear space.
\item A real Hilbert space $H$ is a real vector space with an inner product such that the associated metric makes it into a complete space. In particular, $H$ together with the inner product is a symmetric real bilinear space.
\item The standard symplectic space $\R^{2n}$: let $x_1, \ldots, x_n, y_1, \ldots, y_n$ be the standard basis, then the bilinear form is determined by $[x_i, y_i] = -[y_i, x_i] = 1$ and $0$ for any other combination of basis vectors.
\end{enumerate}
\end{example}
We will use the following straightforward fact implicitly throughout this paper.
\begin{fact}
\thlabel{fact:bilinear-data-of-basis-determines-bilinear-form}
Let $V$ be a vector space with basis $B$. To specify a bilinear form on $V$ it suffices to specify the value for $[a, b]$ for every pair $a,b \in B$ (extend linearly in each argument). In fact, any bilinear form is uniquely determined by the values $[a, b]$ for all $a,b \in B$. It is symmetric precisely when $[a, b] = [b, a]$ for all $a, b \in B$ and it is alternating precisely when $[a, a] = 0$ and $[a, b] = -[b, a]$ for all $a, b \in B$.
\end{fact}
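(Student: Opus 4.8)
The plan is to treat this as routine linear algebra, split into the existence/uniqueness claim and the two characterisations, all of which reduce to expanding in the basis and using bilinearity. For existence, given a prescribed function $c \colon B \times B \to K$, I would define a form on $V$ by writing each $x \in V$ in its unique basis expansion $x = \sum_{a \in B} \lambda_a a$ (with all but finitely many $\lambda_a$ zero), similarly $y = \sum_{b \in B} \mu_b b$, and setting $[x, y] := \sum_{a, b \in B} \lambda_a \mu_b\, c(a,b)$, a finite sum. Well-definedness is immediate from uniqueness of basis expansions, bilinearity follows because the coordinate functions $x \mapsto \lambda_a$ are themselves linear, and $[a,b] = c(a,b)$ for $a, b \in B$ by construction.

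For uniqueness, if $[\cdot,\cdot]_1$ and $[\cdot,\cdot]_2$ are bilinear forms on $V$ agreeing on $B \times B$, then expanding $x$ and $y$ in the basis and applying bilinearity in each argument to both forms yields $[x,y]_i = \sum_{a,b} \lambda_a \mu_b [a,b]_i$ for $i = 1,2$, and the two right-hand sides coincide; hence the forms are equal. The symmetric characterisation is similar: one direction is trivial since $B \subseteq V$, and conversely, if $[a,b] = [b,a]$ for all $a,b \in B$, then for arbitrary $x,y$ the expansions of $[x,y]$ and $[y,x]$ into $\sum_{a,b} \lambda_a \mu_b [a,b]$ and $\sum_{a,b} \lambda_a \mu_b [b,a]$ agree term by term, so the form is symmetric.

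For the alternating characterisation, if $V$ is alternating then $[a,a] = 0$ for $a \in B$ trivially, and $[a,b] = -[b,a]$ by the remark following \thref{def:symmetric-alternating} (expand $[a+b,a+b]$). Conversely, suppose $[a,a] = 0$ and $[a,b] = -[b,a]$ for all $a,b \in B$. For $x = \sum_{a \in F} \lambda_a a$ with $F \subseteq B$ finite, bilinearity gives
\[
[x,x] = \sum_{a,b \in F} \lambda_a \lambda_b [a,b] = \sum_{a \in F} \lambda_a^2 [a,a] + \sum_{\substack{a,b \in F \\ a \neq b}} \lambda_a \lambda_b [a,b],
\]
where the first sum vanishes since each $[a,a] = 0$, and in the second the $(a,b)$ and $(b,a)$ contributions add to $\lambda_a \lambda_b([a,b] + [b,a]) = 0$; hence $[x,x] = 0$ for all $x$. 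Since every step is a direct computation I expect no real obstacle; the only point worth flagging is that in the alternating characterisation the hypothesis $[a,b] = -[b,a]$ on basis pairs genuinely cannot be dropped (vanishing of $[a,a]$ on the basis alone does not force $[x,x] = 0$), which is why the statement carries both conditions and why the converse computation above uses both.
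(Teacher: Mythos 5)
Your proof is correct and is exactly the routine basis-expansion argument that the paper implicitly relies on: this statement is presented in the preliminaries as a well-known fact and given no proof there, the blanket remark being that such claims are trivial. Your additional observation that the alternating characterisation genuinely needs the condition $[a,b] = -[b,a]$ on basis pairs (vanishing of $[a,a]$ on the basis alone is not enough) is also accurate and consistent with how the fact is stated.
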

\begin{definition}
\thlabel{def:non-degenerate}
We call a bilinear space $V$ \emph{non-degenerate} if $[x, y] = 0$ for all $y \in V$ implies $x = 0$ and, symmetrically, $[x, y] = 0$ for all $x \in V$ implies $y = 0$.
\end{definition}
All examples in \thref{ex:bilinear-spaces} are non-degenerate. One might normally only be interested in non-degenerate spaces. However, we will not worry about this so much as every degenerate space embeds into a non-degenerate one. This last claim follows from \thref{thm:ec-models} and \thref{fact:complete-to-ec-model}, but we give a sketch of a direct proof here. Start with some bilinear space $V_0$. We extend $V_0$ to $V_1$ by adding a new basis vector $y_x$ for each $x \in V_0$ and extend the bilinear form so that $x$ and $y_x$ have non-zero bilinear product. We repeat this process $\omega$ times and then take the union of the resulting chain of bilinear spaces.
\begin{definition}
\thlabel{def:category-of-vector-spaces}
Let $K$ be a field. We write $\Vec_K$ for the category of $K$-vector spaces with injective linear maps as arrows.
\end{definition}
\begin{definition}
\thlabel{def:bilinear-homomorphism}
A \emph{bilinear homomorphism} is a linear map between bilinear spaces that respects the bilinear forms. That is, a linear map $f: V \to W$ where $V$ and $W$ are bilinear spaces, such that $[x, y] = [f(x), f(y)]$ for all $x, y \in V$. A \emph{bilinear monomorphism} is an injective bilinear homomorphism.
\end{definition}
\begin{definition}
\thlabel{def:category-of-bilinear-spaces}
Let $K$ be a field. We write $\Bil_K$ for the category of $K$-bilinear spaces with bilinear monomorphisms. Furthermore, we write $\Bils_K$ and $\Bila_K$ for the full subcategory of symmetric and alternating $K$-bilinear spaces respectively. We will write $\Bil^*_K$ if something applies to all of $\Bil_K$, $\Bils_K$ and $\Bila_K$.
\end{definition}
\subsection{Abstract Elementary Categories (AECats)}
\label{subsec:preliminaries-aecats}
We first recall the basic notions concerning accessible categories. A great reference for this is \cite{adamek_locally_1994}.
\begin{definition}
\thlabel{def:lambda-presentable}
Let $\C$ be a category and let $\lambda$ be a regular cardinal. An object $X$ in $\C$ is called \emph{$\lambda$-presentable} if whenever $Y = \colim_{i \in I} Y_i$ is a $\lambda$-directed colimit then every arrow $X \to Y$ factors essentially uniquely as $X \to Y_i \to Y$ for some $i \in I$. Equivalently: $\Hom(X, -)$ preserves $\lambda$-directed colimits.
\end{definition}
\begin{definition}
\thlabel{def:accessible-category}
A category $\C$ is called \emph{$\lambda$-accessible} if:
\begin{enumerate}[label=(\roman*)]
\item $\C$ has $\lambda$-directed colimits;
\item there is a set $\A$ of $\lambda$-presentable objects, such that every object in $\C$ can be written as a $\lambda$-directed colimit of objects in $\A$.
\end{enumerate}
In the case where $\lambda = \omega$ we say that $\C$ is finitely accessible. A category is called \emph{accessible} if it is $\lambda$-accessible for some $\lambda$.
\end{definition}
In \cite[Definition 2.5]{kamsma_kim-pillay_2020} an AECat is defined as a pair of categories $(\C, \M)$ satisfying some properties. We will only be interested in the case where $\C = \M$, so when we say ``$\C$ is an AECat'' we will actually mean ``$(\C, \C)$ is an AECat''. This also allows us to simplify the definition.
\begin{definition}
\thlabel{def:aecat}
An accessible category $\C$ is called an \emph{AECat} if it has all directed colimits and all arrows are monomorphisms.
\end{definition}
\begin{definition}
\thlabel{def:ap}
A category $\C$ is said to have the \emph{amalgamation property} if any span of arrows $Y_1 \leftarrow X \to Y_2$ can be completed to a commuting square.
\end{definition}
There are many examples of AECats with the amalgamation property, such as the motivating example: any category of models of some first-order theory with elementary embeddings. In this paper we will just be interested in $\Vec_K$ and $\Bil_K^*$. Note that the arrows are injective maps in both cases, because all arrows in an AECat have to be monomorphisms. The following is straightforward to check.
\begin{fact}
\thlabel{fact:categories-are-aecats}
For any $K$, the categories $\Vec_K$ and $\Bil^*_K$ are finitely accessible. An object $V$ in any of these categories is $\lambda$-presentable precisely when $\dim(V) < \lambda$. In particular, each of these categories is an AECat.
\end{fact}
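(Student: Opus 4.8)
The plan is to verify everything directly and concretely, exploiting the fact that directed colimits in all of these categories are computed on underlying vector spaces.

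First I would check that $\Vec_K$ and $\Bil^*_K$ have all directed colimits. Given a directed system $(V_i)_{i \in I}$ with bilinear monomorphisms as transition maps, form the usual vector-space colimit $V = \colim_i V_i$; the cocone maps $\iota_i \colon V_i \to V$ are injective because the transition maps are. Equip $V$ with a bilinear form by setting $[x, y] := [x', y']_{V_i}$ whenever $x, y$ are the images under $\iota_i$ of $x', y' \in V_i$. This is well defined and linear in each argument because the system is directed and the transition maps preserve the forms, and it is symmetric resp.\ alternating if every $V_i$ is. One checks this is the colimit in $\Bil^*_K$, and similarly (more easily) in $\Vec_K$.

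Next I would characterise the $\lambda$-presentable objects. For the easy direction, if $\dim(V) \geq \lambda$ then $V$ is the colimit of the $\lambda$-directed system of its subspaces (resp.\ sub-bilinear-spaces, with the restricted form) of dimension $< \lambda$: the system is $\lambda$-directed because $\lambda$ is regular, so a sum of fewer than $\lambda$ subspaces each of dimension $< \lambda$ again has dimension $< \lambda$; and $\mathrm{id}_V$ does not factor through any of these proper subspaces, so $V$ is not $\lambda$-presentable. Conversely, suppose $\dim(V) < \lambda$ and let $f \colon V \to W = \colim_i W_i$ with the system $\lambda$-directed, with cocone maps $\iota_i \colon W_i \to W$. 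Pick a basis $(e_j)_{j \in J}$ of $V$ with $|J| < \lambda$; each $f(e_j)$ lies in the image of some $W_{i_j}$, and by $\lambda$-directedness there is a single $i$ above all the $i_j$, so all $f(e_j)$ factor through $W_i$. Choosing preimages and extending linearly gives a linear map $g \colon V \to W_i$ with $\iota_i \circ g = f$. Then $g$ is injective because $f = \iota_i \circ g$ is, and $g$ respects the bilinear form because $\iota_i$ is a bilinear monomorphism and $\iota_i \circ g = f$; so $g$ is an arrow of the category. Essential uniqueness of $g$ is the standard filtered-colimit argument: two such factorisations agree after composing with a suitable further transition map. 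Hence $V$ is $\lambda$-presentable.

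Finally, taking $\lambda = \omega$ in the previous paragraph shows that every object is an $\omega$-directed colimit of its finite-dimensional sub(-bilinear-)spaces, and a set of representatives for the finite-dimensional objects (e.g.\ the spaces $K^n$ in $\Vec_K$, and the finitely many isomorphism types of $n$-dimensional bilinear spaces for each $n$ in $\Bil^*_K$) serves as the required set $\A$ of $\omega$-presentable objects; this establishes finite accessibility. Combined with the first paragraph, each category has all directed colimits and all arrows are monomorphisms (being injective maps), so each is an AECat by \thref{def:aecat}. I do not expect a serious obstacle: the only points needing care are that the induced form on a directed colimit is well defined, and that the factoring map $g$ above is genuinely an arrow (injective and form-preserving) rather than merely linear — both of which follow immediately from the corresponding properties of $f$.
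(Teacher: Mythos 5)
Your verification is correct and is exactly the routine check the paper leaves unproven (it states the fact with no proof, deeming it straightforward): colimits are computed on underlying vector spaces with the form induced by directedness, $\lambda$-presentability is matched to $\dim < \lambda$ in both directions, and the factoring map inherits injectivity and form-preservation from $f$. One small inaccuracy: your generating family for $\Bil^*_K$ is described as ``the finitely many isomorphism types of $n$-dimensional bilinear spaces for each $n$'', but over an infinite field this can be infinite (e.g.\ symmetric forms over $\Q$ have infinitely many congruence classes already in dimension $1$); this is harmless, since \thref{def:accessible-category} only asks for a \emph{set} of $\omega$-presentable objects, and the isomorphism classes of finite-dimensional $K$-bilinear spaces do form a set.
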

It is easily seen that $\Vec_K$ has the amalgamation property. Later, we will see that $\Bil^*_K$ also has the amalgamation property (\thref{prop:independent-amalgamation}). This is mainly relevant for the notion of Galois type, which we define below. The amalgamation property ensures that ``having the same Galois type'' is indeed transitive and thus an equivalence relation. Once again, we simplify the definition for our specific case.
\begin{definition}
\thlabel{def:galois-type}
Let $\C$ be one of $\Vec_K$ or $\Bil^*_K$. Let $V$ and $V'$ be objects in $\C$, $\bar{a} \in V$, $\bar{a}' \in V'$ and let $B \subseteq V, V'$ be a shared subset. Then we say that $\bar{a}$ and $\bar{a}'$ have the same \emph{Galois type over $B$}, and we write
\[
\gtp(\bar{a} / B; V) = \gtp(\bar{a}' / B; V'),
\]
if there are arrows $V \xrightarrow{f} W \xleftarrow{g} V'$ in our category $\C$ that agree on $B$ and are such that $f(\bar{a}) = g(\bar{a}')$. If $B = \emptyset$ we drop it from the notation altogether.
\end{definition}
We can make sense of arbitrary elements and subsets of the objects in $\Vec_K$ and $\Bil_K^*$ because any tuple $\bar{a} \in V$ uniquely determines a subspace $\linspan{\bar{a}} \subseteq V$, which is then again an object in our category. Furthermore, as $\bar{a}$ generates $\linspan{\bar{a}}$ arrows with domain $\bar{a}$ are in one-to-one correspondence with arrows with domain $\linspan{\bar{a}}$.
\subsection{Positive logic}
\label{subsec:positive-logic}
For a more extensive treatment we refer to \cite{ben-yaacov_positive_2003, poizat_positive_2018}.
\begin{definition}
\thlabel{def:positive-syntax}
Fix a signature $\L$. A \emph{positive existential formula} in $\L$ is one that is built using atomic formulas and $\wedge$, $\vee$, $\top$, $\bot$ and $\exists$. An \emph{h-inductive sentence} is a sentence of the form $\forall \bar{x}(\phi(\bar{x}) \to \psi(\bar{x}))$, where $\phi(\bar{x})$ and $\psi(\bar{x})$ are positive existential formulas. A \emph{positive theory} is a set of h-inductive sentences.
\end{definition}
\begin{convention}
\thlabel{conv:positive-is-standard}
Whenever we say ``formula'' or ``theory'' we will mean ``positive existential formula'' and ``positive theory'' respectively, unless explicitly stated otherwise. This also means that every formula and theory we consider will be implicitly assumed to be positive (existential).
\end{convention}
In full first-order logic we consider elementary embeddings because they preserve and reflect truth of all first-order formulas. We do not have negation in positive logic, so there is a difference between preserving and reflecting truth of formulas.
\begin{definition}
\thlabel{def:homomorphism-immersion}
A function $f: M \to N$ between $\L$-structures is called a \emph{homomorphism} if it preserves truth of formulas. That is, if for every $\phi(\bar{x})$ and every $\bar{a} \in M$ we have
\[
M \models \phi(\bar{a}) \implies N \models \phi(f(\bar{a})).
\]
We call $f$ an \emph{immersion} if the converse implication also holds.
\end{definition}
\begin{definition}
\thlabel{def:ec-model}
We call a model $M$ of $T$ an \emph{existentially closed model} or an \emph{e.c.\ model} if the following equivalent conditions hold:
\begin{enumerate}[label=(\roman*)]
\item every homomorphism $f: M \to N$ with $N \models T$ is an immersion;
\item for every $\bar{a} \in M$ and $\phi(\bar{x})$ such that $M \not \models \phi(\bar{a})$ there is $\psi(\bar{x})$ with $T \models \neg \exists \bar{x}(\phi(\bar{x}) \wedge \psi(\bar{x}))$ and $M \models \psi(\bar{a})$.
\end{enumerate}
\end{definition}
Positive model theory generally studies the e.c.\ models of a theory. Every model can be completed to an e.c.\ model anyway, see the fact below.
\begin{fact}
\thlabel{fact:complete-to-ec-model}
Let $M$ be a model of some theory $T$ then there is a homomorphism $f: M \to N$ such that $N$ is an e.c.\ model of $T$.
\end{fact}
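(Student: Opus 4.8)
\emph{Proof proposal.} I would run the standard two-level chain construction. Call a pair $(\bar{a}, \phi(\bar{x}))$ with $\bar{a} \in M$ \emph{bad} for a model $M \models T$ if $M \not\models \phi(\bar{a})$ but there is no $\psi(\bar{x})$ with $T \models \neg\exists\bar{x}(\phi(\bar{x}) \wedge \psi(\bar{x}))$ and $M \models \psi(\bar{a})$; by \thref{def:ec-model}(ii), a model of $T$ is e.c.\ precisely when it has no bad pairs. The first step is the key extension lemma: if $(\bar{a}, \phi)$ is bad for $M$, then there is a homomorphism $M \to M'$ with $M' \models T$ that satisfies $\phi$ at the image of $\bar{a}$. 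This follows from compactness applied to $T \cup \Delta^+(M) \cup \{\phi(\bar{a})\}$, where $\Delta^+(M)$ is the positive diagram of $M$ (with a new constant naming each element): if this set were inconsistent, a finite subset would yield a positive existential $\chi'(\bar{a}, \bar{m})$ true in $M$ with $T \models \neg\exists\bar{x}(\phi(\bar{x}) \wedge \exists\bar{y}\,\chi'(\bar{x}, \bar{y}))$, so $\psi(\bar{x}) := \exists\bar{y}\,\chi'(\bar{x}, \bar{y})$ would witness that $(\bar{a}, \phi)$ is not bad; a model of the consistent theory provides $M'$, with the homomorphism read off from the constants.

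Next I would record two monotonicity facts. (a) Once $M \models \phi(\bar{a})$, this persists along every homomorphism (positive existential formulas are preserved), so a once-resolved bad pair stays resolved. (b) If $(\bar{a}, \phi)$ is \emph{not} bad for $M$, then it remains not bad along any homomorphism into a model of $T$ (with the same certifying $\psi$, if one was needed), since $T \models \neg\exists\bar{x}(\phi \wedge \psi)$ forces $\phi$ to fail wherever $\psi$ holds. Now the construction: given $M_0 \models T$, enumerate its bad pairs as $((\bar{a}_i, \phi_i))_{i < \kappa}$ with $\kappa \leq |M_0| + |\L| + \aleph_0$, and build a continuous chain of homomorphisms $M_0 \to P_1 \to P_2 \to \cdots$ of models of $T$ where at stage $i$ one applies the extension lemma to resolve $(\bar{a}_i, \phi_i)$ if it is still bad for $P_i$, and does nothing otherwise; put $M_0^+ := \colim_i P_i$. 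By (a) and (b), every bad pair of $M_0$ is resolved in $M_0^+$. Iterating, form the outer chain $M = N_0 \to N_1 \to N_2 \to \cdots$ with $N_{n+1} := N_n^+$, and set $N := \colim_n N_n$ with the evident homomorphism $M \to N$.

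Finally I would check $N \models T$ and that $N$ is e.c. For $N \models T$: the axioms of $T$ are h-inductive, and h-inductive sentences are preserved under directed colimits of homomorphisms — if $N$ satisfies a positive existential $\phi(\bar{a})$ then, as such a formula is already satisfied at some stage of a directed colimit of structures, some $N_n$ satisfies $\phi$ at a preimage of $\bar{a}$, hence (being a model of $T$) satisfies the consequent there, which transfers up to $N$ by preservation; the same argument applies to each inner colimit $M_0^+$. For ``e.c.'': any bad pair $(\bar{a}, \phi)$ of $N$ has $\bar{a}$ lying in some $N_n$ (finite tuples appear at finite stages); by (b) it must then be bad for $N_n$ as well, so it is resolved in $N_{n+1} = N_n^+$, hence in $N$ by (a) — contradicting that it is bad for $N$. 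The point that requires care, and forces the chain to be two levels deep, is that one cannot resolve all bad pairs of a model simultaneously by a single compactness argument: extracting a finite subset only controls a \emph{conjunction} of finitely many $\phi_i(\bar{a}_i)$, which need not certify any individual pair. Hence the bad pairs must be handled one at a time along the inner chain, while the outer chain mops up the new bad pairs created by the elements added at each round.
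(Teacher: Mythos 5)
The paper itself offers no proof of this Fact: it is quoted as standard from the positive-logic references, and your proposal is exactly the construction those references use (resolve one failure of existential closedness at a time via the positive diagram plus compactness, close off along a chain, iterate $\omega$ times, and use that h-inductive sentences pass to directed colimits while positive existential formulas persist forward). The extension lemma, the preservation facts, and the colimit verifications are all correct, so the overall route is the intended one.

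One step is stated too strongly, and as literally written it is false: it is not the case that every bad pair of $M_0$ is \emph{resolved} in $M_0^+$, nor that a pair which is bad for $N_n$ is still bad at the inner stage where it is scheduled --- badness does not persist forward along homomorphisms. Resolving one bad pair can silently certify another: in $T_K$, if the value $[a,b]$ is undefined in $M_0$, then the pairs $\bigl((a,b),\,[x,y]=0\bigr)$ and $\bigl((a,b),\,[x,y]=1\bigr)$ are both bad, and resolving the first makes the second not bad (with certificate $[x,y]=0$) while it is never resolved. Your fact (b) only yields the contrapositive direction (bad for $P_i$ implies bad for $M_0$), not the persistence of badness you invoke. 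The repair stays entirely inside your own toolkit: at the stage of the inner chain where the pair $(\bar{a},\phi)$ is treated, it is either still bad, in which case the extension lemma makes $\phi(\bar{a})$ true and (a) keeps it true, or it has acquired a certificate $\psi$ with $T \models \neg\exists\bar{x}(\phi \wedge \psi)$, and then $\psi(\bar{a})$ persists to $M_0^+$ and to $N$. So the correct invariant is: for every pair over $N_n$, in $N_{n+1}$ either $\phi$ or some $\psi$ inconsistent with $\phi$ modulo $T$ holds at $\bar{a}$; either way the pair is not bad for $N$ (using (b), since $N \models T$), which is all the final contradiction needs.
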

\begin{definition}
\thlabel{def:type}
Let $M$ be an e.c.\ model, $B \subseteq M$ and $\bar{a} \in M$. Then the \emph{type of $\bar{a}$ over $B$} is defined as:
\[
\tp(\bar{a}/B) = \{ \phi(\bar{x}) \text{ with parameters in } B : M \models \phi(\bar{a}) \}.
\]
In other words, it is a maximal consistent set of formulas with parameters in $B$. A \emph{partial type} over $B$ is just any consistent set of formulas over $B$.
\end{definition}
\begin{definition}
\thlabel{def:saturation}
We call an e.c.\ model $M$ of some theory $T$ \emph{$\kappa$-saturated} if any partial type with $< \kappa$ variables and $< \kappa$ parameters from $M$ that is finitely satisfiable in $M$ has a realisation in $M$.
\end{definition}
Because e.c.\ models are generally not the same as just models of some theory there can be h-inductive sentences that are true in all e.c.\ models, but fail in some models. Such sentences can be added to the theory without changing the class of e.c.\ models. It will be useful to have some notation for this.
\begin{definition}
\thlabel{def:t-ec}
Let $T$ be a theory. The \emph{Kaiser hull of $T$} is defined as:
\[
T^\ec = \{\chi \text{ an h-inductive sentence} : M \models \chi \text{ for every e.c.\ model $M$ of $T$}\}.
\]
\end{definition}
The following definitions, except for being Boolean, are taken from \cite{ben-yaacov_thickness_2003}, and are very useful for developing (neo)stability theory for positive logic.
\begin{definition}
\thlabel{def:hausdorff}
Let $T$ be a positive theory. We call $T$:
\begin{itemize}
\item \emph{Boolean} if every formula in full first-order logic is equivalent to some positive existential formula modulo $T$, or equivalently: for every positive existential formula $\phi(\bar{x})$ there is a positive existential $\psi(\bar{x})$ that is equivalent to $\neg \phi(\bar{x})$ modulo $T$;
\item \emph{Hausdorff} if for any two distinct types $p(\bar{x})$ and $q(\bar{x})$ there are $\phi(\bar{x}) \not \in p(\bar{x})$ and $\psi(\bar{x}) \not \in q(\bar{x})$ such that $T^\ec \models \forall \bar{x}(\phi(\bar{x}) \vee \psi(\bar{x}))$;
\item \emph{semi-Hausdorff} if equality of types is type-definable, so there is a partial type $\Omega(\bar{x}, \bar{y})$ such that for any $\bar{a}, \bar{b}$ in some e.c.\ model $M$ we have $\tp(\bar{a}) = \tp(\bar{b})$ if and only if $M \models \Omega(\bar{a}, \bar{b})$;
\end{itemize}
\end{definition}
The reason for the name Hausdorff is that this corresponds to the type spaces being Hausdorff, where formulas correspond to closed sets.
\begin{fact}
\thlabel{fact:boolean-implies-hausdorff-implies-semi-hausdorff}
Boolean implies Hausdorff implies semi-Hausdorff.
\end{fact}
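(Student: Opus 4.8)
The plan is to prove the two implications separately, each by directly unwinding the definitions; no real machinery is needed. For \textbf{Boolean $\Rightarrow$ Hausdorff}, I would start with two distinct types $p(\bar x) \ne q(\bar x)$ and, after possibly interchanging them, fix a formula $\theta(\bar x) \in q \setminus p$. Using that $T$ is Boolean, choose a positive existential $\psi(\bar x)$ equivalent to $\neg\theta(\bar x)$ modulo $T$. Then $\forall\bar x(\theta(\bar x) \vee \psi(\bar x))$ is an h-inductive semantic consequence of $T$, hence holds in every e.c.\ model, so $T^{\ec} \models \forall\bar x(\theta(\bar x)\vee\psi(\bar x))$. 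It remains to note that $\theta \notin p$ by choice, and that $\psi \notin q$: realising $q$ by some $\bar b$ in an e.c.\ model $N$ gives $N \models \theta(\bar b)$, so $N \not\models \psi(\bar b)$ by the equivalence. Taking $\phi := \theta$, the pair $\phi, \psi$ is exactly what the definition of Hausdorff asks for.

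For \textbf{Hausdorff $\Rightarrow$ semi-Hausdorff}, I would exhibit the partial type
\[
\Omega(\bar x, \bar y) := \{\, \phi(\bar x) \vee \psi(\bar y) : \phi(\bar z), \psi(\bar z) \text{ formulas}, \ T^{\ec} \models \forall \bar z\,(\phi(\bar z) \vee \psi(\bar z)) \,\}
\]
and verify three things. First, $\Omega$ is consistent, so that it is a genuine partial type: if $\bar a$ realises a type in an e.c.\ model $M$, then $M \models \Omega(\bar a, \bar a)$, because $M \models T^{\ec}$ forces $M \models \phi(\bar a) \vee \psi(\bar a)$ for each relevant $\phi, \psi$. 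Second, if $\tp(\bar a) = \tp(\bar b) = p$ then $M \models \Omega(\bar a, \bar b)$: given $\phi(\bar x) \vee \psi(\bar y) \in \Omega$ we have $M \models \phi(\bar a) \vee \psi(\bar a)$, so $\phi \in p$ or $\psi \in p = \tp(\bar b)$, and either way $M \models \phi(\bar a) \vee \psi(\bar b)$. Third, if $\tp(\bar a) \ne \tp(\bar b)$, then Hausdorffness yields $\phi \notin \tp(\bar a)$ and $\psi \notin \tp(\bar b)$ with $T^{\ec} \models \forall\bar z(\phi(\bar z)\vee\psi(\bar z))$; this $\phi(\bar x) \vee \psi(\bar y)$ lies in $\Omega$ yet fails at $(\bar a, \bar b)$, so $M \not\models \Omega(\bar a, \bar b)$. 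Hence $\Omega$ type-defines equality of types, which is semi-Hausdorffness.

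I do not expect a genuine obstacle: the statement is essentially bookkeeping, and uses nothing about bilinear spaces. The only points needing a little care are keeping track of whether an equivalence or consequence is taken modulo $T$ or modulo $T^{\ec}$ (harmless, since $T \subseteq T^{\ec}$ and e.c.\ models satisfy both), and confirming that $\Omega$ really is a partial type rather than merely a set of formulas — which is the reason for including the consistency check. Throughout, everything is over $\emptyset$ and "type" means a complete type realised in an e.c.\ model, in line with the conventions fixed just before the statement.
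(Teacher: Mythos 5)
Your argument is correct. The paper states this as a \emph{Fact} without proof (the preliminaries declare such claims well-known, with the definitions taken from the literature), and your two reductions are exactly the standard argument one would supply: for Boolean $\Rightarrow$ Hausdorff, pairing a formula separating the two types with its positive negation, and for Hausdorff $\Rightarrow$ semi-Hausdorff, taking $\Omega(\bar x,\bar y)$ to be all disjunctions $\phi(\bar x)\vee\psi(\bar y)$ with $T^{\ec}\models\forall\bar z(\phi(\bar z)\vee\psi(\bar z))$; your three verifications (consistency, equal types satisfy $\Omega$, distinct types are separated via Hausdorffness) are all sound, and the bookkeeping about $T$ versus $T^{\ec}$ is handled correctly.
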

The reader that is familiar with the above terminology might be missing one term: \emph{thickness}, which asserts that being an indiscernible sequence is type-definable and which is again weaker than being semi-Hausdorff. However, we will have no use for that notion here, so we leave it at an honourable mention.

Boolean theories are essentially the classical full first-order theories. Through a process called \emph{positive Morleyisation} \cite[section 2.3]{poizat_positive_2018} we can view any theory in full first-order logic as a Boolean positive theory, and we will implicitly do so.

The following fact is useful for proving or disproving that a theory is Hausdorff.
\begin{fact}[{\cite[Theorem 8]{poizat_positive_2018}}]
\thlabel{fact:hausdorff-iff-aph}
The following are equivalent for a theory $T$:
\begin{enumerate}[label=(\roman*)]
\item $T$ is Hausdorff;
\item any model of $T^\ec$ is an amalgamation base, so any span of homomorphisms $M_1 \leftarrow M \to M_2$ between models of $T$ (not necessarily e.c.) with $M \models T^\ec$ can be amalgamated to $M_1 \to N \leftarrow M_2$ where $N \models T$.
\end{enumerate}
\end{fact}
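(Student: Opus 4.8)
The plan is to route both implications through the standard description of amalgams of a span via diagrams and compactness. Recall that, for homomorphisms $f_1\colon M\to M_1$ and $f_2\colon M\to M_2$ with $M_1,M_2\models T$, an amalgam $M_1\to N\leftarrow M_2$ with $N\models T$ exists if and only if $T$ together with the atomic diagrams of $M_1$ and $M_2$, taken in a language in which the constant naming $f_1(m)$ is identified with the one naming $f_2(m)$ for each $m\in M$, is consistent: a model of this theory, viewed as an $\L$-structure, receives homomorphisms from $M_1$ and $M_2$ agreeing on $M$ (using that a map of $\L$-structures is a homomorphism exactly when it preserves atomic formulas), and conversely. By compactness this consistency fails precisely when there are formulas $\phi(\bar z),\psi(\bar z)$ and a tuple $\bar m\in M$ with $M_1\models\phi(f_1(\bar m))$, $M_2\models\psi(f_2(\bar m))$ and $T\models\neg\exists\bar z(\phi(\bar z)\wedge\psi(\bar z))$. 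I would isolate this as a lemma; its proof is routine.

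For $(i)\Rightarrow(ii)$, assume $T$ is Hausdorff and take a span $M_1\xleftarrow{f_1}M\xrightarrow{f_2}M_2$ with $M\models T^{\ec}$ and $M_1,M_2\models T$. Composing the legs with homomorphisms into e.c.\ models (\thref{fact:complete-to-ec-model}) we may assume $M_1,M_2$ are e.c., and it suffices to amalgamate this new span. Suppose no amalgam exists and take $\phi,\psi,\bar m$ as above; let $p$ be the type of $f_1(\bar m)$ in $M_1$ and $q$ the type of $f_2(\bar m)$ in $M_2$ (genuine types, as $M_1,M_2$ are e.c.). If $p=q$ then $\phi\wedge\psi\in p$, so the tuple realising $p$ satisfies $\phi\wedge\psi$ in a model of $T$, contradicting $T\models\neg\exists\bar z(\phi\wedge\psi)$; hence $p\neq q$. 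By Hausdorffness pick $\phi'\notin p$ and $\psi'\notin q$ with $T^{\ec}\models\forall\bar z(\phi'(\bar z)\vee\psi'(\bar z))$. This sentence is h-inductive, so $M\models T^{\ec}$ gives $M\models\forall\bar z(\phi'\vee\psi')$ and hence $M\models\phi'(\bar m)$ or $M\models\psi'(\bar m)$; pushing the satisfied disjunct forward along $f_1$ or $f_2$ yields $\phi'\in p$ or $\psi'\in q$, a contradiction. So the span amalgamates and $M$ is an amalgamation base.

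For $(ii)\Rightarrow(i)$ I argue contrapositively. Suppose $T$ is not Hausdorff, witnessed by distinct types $p\neq q$ in variables $\bar x$ that cannot be separated. Since $p\neq q$ and both are maximal, there are $\phi_0\in p$, $\psi_0\in q$ with $T\models\neg\exists\bar x(\phi_0\wedge\psi_0)$; it then suffices to produce a model $M\models T^{\ec}$ that is not an amalgamation base. Concretely, suppose $M\models T^{\ec}$ carries a tuple $\bar m$ whose set $r$ of satisfied (positive existential) formulas satisfies $r\subseteq p$ and $r\subseteq q$. Then, noting that every positive existential formula true of $\bar m$ in $M$ lies in $p\cap q$, the theories $T\cup\operatorname{Diag}^{+}(M)\cup\{\phi_0(\bar m)\}$ and $T\cup\operatorname{Diag}^{+}(M)\cup\{\psi_0(\bar m)\}$ are consistent by compactness, giving homomorphisms $g_1\colon M\to M_1'$ and $g_2\colon M\to M_2'$ into models of $T$ with $M_1'\models\phi_0(g_1(\bar m))$ and $M_2'\models\psi_0(g_2(\bar m))$. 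By the reformulation the span $M_1'\xleftarrow{g_1}M\xrightarrow{g_2}M_2'$ has no amalgam (any amalgam would contain, in a model of $T$, a tuple satisfying $\phi_0\wedge\psi_0$), so $M$ is not an amalgamation base, contradicting (ii). The real obstacle is the existence of such an $M$: it cannot be e.c., since there $r$ would be a complete type and $r\subseteq p$, $r\subseteq q$ would force $p=q$. One must instead build a non-e.c.\ model of $T^{\ec}$ realising a sufficiently small positive type — for instance the positive consequences of $\phi_0\vee\psi_0$ together with the negations of all positive formulas they fail to imply — and this is exactly where inseparability of $p$ and $q$ enters, via a compactness argument on type spaces showing that this partial type is realised inside a model of $T^{\ec}$. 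The remainder is bookkeeping with homomorphisms and the diagram criterion.
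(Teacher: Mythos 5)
The paper itself does not prove this Fact---it is quoted from Poizat--Yeshkeyev---so your argument can only be judged on its own terms. Your direction (i) $\Rightarrow$ (ii) is correct and complete: the reduction to e.c.\ targets via \thref{fact:complete-to-ec-model}, the diagram/compactness criterion for amalgamability of a span along homomorphisms, the observation that $\forall \bar{z}(\phi' \vee \psi')$ is h-inductive and therefore holds in $M \models T^\ec$, and pushing the satisfied disjunct forward along $f_1$ or $f_2$, are exactly the right steps.

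The gap is in (ii) $\Rightarrow$ (i), and you have located it yourself: everything hinges on producing a model $M \models T^\ec$ with a tuple $\bar{m}$ whose positive type is contained in $p \cap q$, and this is precisely the step you defer to ``a compactness argument on type spaces''. As written, your plan for it does not work: the partial type you propose to realise (``the positive consequences of $\phi_0 \vee \psi_0$ together with the negations of all positive formulas they fail to imply'') has no visible connection to $p$ and $q$, and since negations of positive formulas are not positive you must in any case specify which first-order theory is being shown consistent and why it is finitely satisfiable. The correct construction is: consider $T^\ec \cup \{\neg\chi(\bar{c}) : \chi \notin p\} \cup \{\neg\chi(\bar{c}) : \chi \notin q\}$ in new constants $\bar{c}$. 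A finite fragment excludes finitely many $\phi_1,\ldots,\phi_k \notin p$ and $\psi_1,\ldots,\psi_l \notin q$; because the type of a tuple in an e.c.\ model contains a disjunction if and only if it contains one of the disjuncts, $\phi := \phi_1 \vee \cdots \vee \phi_k \notin p$ and $\psi := \psi_1 \vee \cdots \vee \psi_l \notin q$, so the assumed failure of separation for the pair $(\phi,\psi)$ gives an e.c.\ model of $T$ (hence a model of $T^\ec$) with a tuple at which $\phi \vee \psi$ fails, i.e.\ all $\phi_i$ and all $\psi_j$ fail simultaneously. Compactness then yields the desired $M$ and $\bar{m}$, and the remainder of your argument (consistency of $T$ plus the positive diagram of $M$ plus $\phi_0(\bar{m})$, respectively $\psi_0(\bar{m})$, because every positive formula true of $\bar{m}$ lies in $p$ and in $q$; then the resulting span has no amalgam since $T \models \neg\exists\bar{x}(\phi_0 \wedge \psi_0)$) goes through as you wrote it. Until this construction is actually carried out, the (ii) $\Rightarrow$ (i) direction is unproved, since this is exactly the point where non-Hausdorffness is used.
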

The following is the positive analogue of being complete in full first-order logic.
\begin{definition}
\thlabel{def:jep}
A theory $T$ has the \emph{joint embedding property}, or \emph{JEP}, if for any two e.c.\ models $M_1$ and $M_2$ there is $N \models T$ with homomorphisms $M_1 \to N \leftarrow M_2$.
\end{definition}

\section{The independence relation}
\label{sec:independence-relation}
Throughout this section we again fix a field $K$ and drop it from any names and terms (except for the names of our categories).

The notion of linear independence can be formulated as a ternary relation on subsets of vector spaces. We give two equivalent formulations.
\begin{definition}
\thlabel{def:linear-independence}
Let $V$ be a vector space and let $A, B, C \subseteq V$. We say that \emph{$A$ is (linearly) independent from $B$ over $C$}, and write $A \ind^V_C B$, if the following equivalent statements hold:
\begin{enumerate}[label=(\roman*)]
\item $\linspan{AC} \cap \linspan{BC} = \linspan{C}$;
\item given a basis $C_0$ for $\linspan{C}$ and $A_0$ and $B_0$ such that $A_0 C_0$ is a basis for $\linspan{AC}$ and $B_0 C_0$ is a basis for $\linspan{BC}$, we have that $A_0 B_0 C_0$ is a linearly independent set.
\end{enumerate}
Notation: any of $A$, $B$ or $C$ can be replaced by a tuple enumerating them. For example, if $\bar{a}$ enumerates $A$ then $\bar{a} \ind_C^V B$ just means $A \ind_C^V B$.
\end{definition}
The relation $\ind$ really is a ternary relation on subobjects of objects in $\Vec_K$ (again using the idea that an arbitrary subset $A$ is essentially the same as $\linspan{A}$). This independence relation is known to have many desirable properties in $\Vec_K$. Most of these properties are immediate from the definition, for the remainder see for example \cite[Fact 2.1.4]{kim_simplicity_2014}.
\begin{fact}
\thlabel{fact:linear-independence-facts}
The independence relation $\ind$ has the following properties in $\Vec_K$.
\begin{description}
\item[\textsc{Invariance}] For any arrow $f: V \to W$ we have $A \ind_C^V B$ iff $f(A) \ind_{f(C)}^W f(B)$.

\item[\textsc{Monotonicity}] If $A \ind_C^V B$ and $B' \subseteq B$ then $A \ind_C^V B'$.

\item[\textsc{Base Monotonicity}] If $A \ind_C^V B$ and $C \subseteq C' \subseteq B$ then $A \ind_{C'}^V B$.

\item[\textsc{Transitivity}] If $A \ind_B^V C$ and $A \ind_C^V D$ with $B \subseteq C \subseteq D$ then $A \ind_B^V D$

\item[\textsc{Symmetry}] If $A \ind_C^V B$ then $B \ind_C^V A$.

\item[\textsc{Existence}] We always have $A \ind_C^V C$.

\item[\textsc{Finite Character}] If $A \ind_C^V B'$ for all finite $B' \subseteq B$ then $A \ind_C^V B$.

\item[\textsc{Local Character}] For any $A, B \subseteq V$ there is $B' \subseteq B$ with $\dim(\linspan{B'}) \leq \dim(\linspan{A})$ such that $A \ind_{B'}^V B$.

\item[\textsc{Extension}] If $\bar{a} \ind_C^V B$ then for any $D \subseteq V$ there is an extension $V \subseteq W$ with some $\bar{a}'$ in $W$ such that $\gtp(\bar{a} / BC; V) = \gtp(\bar{a}' / BC; W)$ and $\bar{a}' \ind_C^W BD$.

\item[\textsc{Stationarity}] If $\gtp(\bar{a}/C; V) = \gtp(\bar{a}'/C; V)$ then $\bar{a} \ind_C^V B$ and $\bar{a}' \ind_C^V B$ implies $\gtp(\bar{a}/BC; V) = \gtp(\bar{a}'/BC; V)$.
\end{description}
\end{fact}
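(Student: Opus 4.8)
The statement bundles the standard independence-calculus axioms for linear independence, so the plan is to verify them one by one, using throughout the paper's convention that $A$, $B$, $C$ stand for the subspaces $\linspan{A}$, $\linspan{B}$, $\linspan{C}$; under this convention formulation (i) simply says $\linspan{AC} \cap \linspan{BC} = \linspan{C}$. Most of the axioms fall straight out of (i). \textsc{Symmetry} is clear since $\linspan{AC} \cap \linspan{BC} = \linspan{C}$ is symmetric in $A$ and $B$, and \textsc{Existence} is the tautology $\linspan{AC} \cap \linspan{C} = \linspan{C}$. For \textsc{Monotonicity}, $\linspan{C} \subseteq \linspan{AC} \cap \linspan{B'C} \subseteq \linspan{AC} \cap \linspan{BC} = \linspan{C}$. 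For \textsc{Base-Monotonicity}, with $C \subseteq C' \subseteq B$, any $v \in \linspan{AC'} \cap \linspan{B}$ can be written $v = a + c'$ with $a \in \linspan{A}$ and $c' \in \linspan{C'} \subseteq \linspan{B}$, so $a = v - c' \in \linspan{A} \cap \linspan{B} \subseteq \linspan{AC} \cap \linspan{BC} = \linspan{C} \subseteq \linspan{C'}$, whence $v \in \linspan{C'}$. \textsc{Invariance} holds because an arrow of $\Vec_K$ is an injective linear map, and injective linear maps commute both with forming spans and with intersecting subspaces, so they transport the defining identity back and forth. \textsc{Finite Character} holds because $\linspan{BC}$ is the $\subseteq$-directed union of the subspaces $\linspan{B'C}$ over finite $B' \subseteq B$, and intersecting the fixed subspace $\linspan{AC}$ with a union of subspaces commutes with the union.

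\textsc{Transitivity} also follows directly from (i): if $v \in \linspan{AB} \cap \linspan{DB}$, write $v = a + b_1 = d + b_2$ with $a \in \linspan{A}$, $d \in \linspan{D}$, $b_1, b_2 \in \linspan{B}$; then $a = d + (b_2 - b_1) \in \linspan{D} + \linspan{C}$ (as $\linspan{B} \subseteq \linspan{C}$) and $a \in \linspan{A} \subseteq \linspan{A} + \linspan{C}$, so $a \in \linspan{AC} \cap \linspan{DC} = \linspan{C}$ by $A \ind_C^V D$; hence $a \in \linspan{A} \cap \linspan{C} \subseteq \linspan{AB} \cap \linspan{C} = \linspan{B}$ by $A \ind_B^V C$, and so $v = a + b_1 \in \linspan{B}$. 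For \textsc{Local Character} I would simply take $B' = \linspan{A} \cap \linspan{B}$, a subspace of $\linspan{B}$ with $\dim \linspan{B'} \leq \dim \linspan{A}$; then $\linspan{AB'} = \linspan{A}$ and $\linspan{BB'} = \linspan{B}$, so $\linspan{AB'} \cap \linspan{BB'} = \linspan{A} \cap \linspan{B} = \linspan{B'}$, i.e. $A \ind_{B'}^V B$.

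The two remaining axioms involve Galois types, so first I would record the basic fact that in $\Vec_K$, for a subspace $E$ and tuples $\bar{a} \in V$, $\bar{a}' \in V'$, one has $\gtp(\bar{a}/E; V) = \gtp(\bar{a}'/E; V')$ if and only if $\bar{a}$ and $\bar{a}'$ satisfy the same $K$-linear equations over $E$, equivalently the coordinatewise assignment $\bar{a} \mapsto \bar{a}'$ fixing $E$ extends to a linear isomorphism $\linspan{\bar{a}E} \to \linspan{\bar{a}'E}$: one direction pushes the linear relations along the two witnessing arrows, the other amalgamates $V$ and $V'$ along such an isomorphism. Granting this, for \textsc{Extension}, given $\bar{a} \ind_C^V B$, I would let $W$ be the amalgam of $V$ with a fresh copy of $\linspan{\bar{a}BC}$ over $\linspan{BC}$, arranged so that inside $W$ the two spaces meet exactly in $\linspan{BC}$, and let $\bar{a}'$ be the image of $\bar{a}$ coming from the copy. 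The copying isomorphism fixes $\linspan{BC}$, so $\bar{a}'$ has the same linear relations over $\linspan{BC}$ as $\bar{a}$, giving $\gtp(\bar{a}/BC; V) = \gtp(\bar{a}'/BC; W)$; moreover $\linspan{\bar{a}'BC} \cap V = \linspan{BC}$ in $W$ by construction, while $\linspan{\bar{a}'C} \cap \linspan{BC} = \linspan{C}$ is the transport under the isomorphism of $\linspan{\bar{a}C} \cap \linspan{BC} = \linspan{C}$ (which is exactly $\bar{a} \ind_C^V B$), so since $\linspan{BDC} \subseteq V$ we get $\linspan{\bar{a}'C} \cap \linspan{BDC} = \linspan{C}$, that is $\bar{a}' \ind_C^W BD$.

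Finally, for \textsc{Stationarity}, suppose $\gtp(\bar{a}/C; V) = \gtp(\bar{a}'/C; V)$. Any $K$-linear relation of $\bar{a}$ over $\linspan{BC}$ lies in $\linspan{\bar{a}C} \cap \linspan{BC} = \linspan{C}$ by $\bar{a} \ind_C^V B$, so it is already a relation over $\linspan{C}$ and hence, by sameness of Galois type over $C$ and the observation above, is also a relation of $\bar{a}'$; symmetrically, using $\bar{a}' \ind_C^V B$, every relation of $\bar{a}'$ over $\linspan{BC}$ is a relation of $\bar{a}$. So $\bar{a}$ and $\bar{a}'$ satisfy the same $K$-linear relations over $\linspan{BC}$, which by the observation is precisely $\gtp(\bar{a}/BC; V) = \gtp(\bar{a}'/BC; V)$. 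The only step where I expect real (though still routine) work is the Galois-type observation and the amalgam construction in \textsc{Extension}; every other axiom is a one-line manipulation of subspace sums and intersections, essentially what is recorded in \cite[Fact 2.1.4]{kim_simplicity_2014}.
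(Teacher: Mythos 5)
Your proposal is correct and is essentially the argument the paper has in mind: the paper states this as a known fact, remarking that most properties are immediate from the definition of $\ind$ and citing \cite[Fact 2.1.4]{kim_simplicity_2014} for the remainder, and your verifications are exactly the routine span-intersection computations, together with the standard identification of Galois types over a subspace in $\Vec_K$ with linear-equation (quantifier-free) types via amalgamation, that this remark alludes to. Your reading of \textsc{Local Character}, taking $B' = \linspan{A} \cap \linspan{B}$ as a subobject of $\linspan{B}$ rather than a literal subset of $B$, is the intended one, since the paper explicitly treats $\ind$ as a relation on subobjects and identifies a subset with its span (and the literal-subset reading would not even satisfy the stated dimension bound).
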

\begin{definition}
\thlabel{def:stable-independence-relation}
An independence relation $\ind$ satisfying the properties in \thref{fact:linear-independence-facts} is called a \emph{stable independence relation}.
\end{definition}
The fact that $\Vec_K$ has a stable independence relation means that it is model-theoretically very well-behaved. The situation in $\Bil^*_K$ turns out to be a little bit more complicated: we lose the \textsc{Stationarity} property.
\begin{proposition}
\thlabel{prop:failure-of-stationarity}
The \textsc{Stationarity} property fails for $\ind$ over every $C$ in $\Bil^*_K$. That is, for any $C$ in $\Bil^*_K$ there is an extension $C \subseteq V$ with $a, a', b \in V$ such that $\gtp(a/C; V) = \gtp(a'/C; V)$, $a \ind_C^V b$ and $a' \ind_C^V b$, while $\gtp(a/Cb; V) \neq \gtp(a'/Cb; V)$.
\end{proposition}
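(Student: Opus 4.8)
The plan is to exhibit an explicit extension in which the bilinear form is as degenerate as possible. Fix $C$ in $\Bil^*_K$ and a basis $B_C$ of its underlying vector space. Adjoin four new basis vectors $a, a', b, b'$ and let $W$ be the vector space with basis $B_C \cup \{a, a', b, b'\}$, equipped with the bilinear form extending that of $C$ and defined (via \thref{fact:bilinear-data-of-basis-determines-bilinear-form}) by: each of $a, a', b, b'$ is isotropic and orthogonal to $C$; $[a,a'] = [b,b'] = 0$; and $[a',b] = [a,b'] = 1$ while $[a,b] = [a',b'] = 0$. The remaining transposed values $[b,a]$, $[b,a']$, $[b',a]$, $[b',a']$, etc., are taken equal to the corresponding forward value in the case $\Bils_K$, equal to minus it in the case $\Bila_K$, and all $0$ in the case $\Bil_K$. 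A direct check on the new basis vectors then shows that $W$, and hence its subspace $V := \linspan{B_C \cup \{a, a', b\}}$, is an object of $\Bil^*_K$, and clearly $C \subseteq V$ is an extension.

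Two of the three required assertions are then immediate. Since $B_C \cup \{a, a', b\}$ is a basis of $V$, we have $\linspan{Ca} \cap \linspan{Cb} = \linspan{C} = \linspan{Ca'} \cap \linspan{Cb}$, so $a \ind_C^V b$ and $a' \ind_C^V b$ by \thref{def:linear-independence}. For $\gtp(a/Cb; V) \neq \gtp(a'/Cb; V)$: if there were arrows $V \xrightarrow{f} W' \xleftarrow{g} V$ in $\Bil^*_K$ agreeing on $Cb$ with $f(a) = g(a')$, then, since bilinear monomorphisms preserve the form and $f(b) = g(b)$,
\[
0 = [a,b] = [f(a), f(b)] = [g(a'), g(b)] = [a',b] = 1,
\]
which is impossible.

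The remaining point, $\gtp(a/C; V) = \gtp(a'/C; V)$, is the one needing care, and it is where I expect the only real obstacle to lie. The naive witness fails: the linear automorphism of $V$ fixing $C$ and swapping $a \leftrightarrow a'$ does not respect the form, precisely because $[a,b] \neq [a',b]$ --- indeed $a$ is orthogonal to all of $V$, so $b$ has nowhere to be sent inside $V$. This is the role of the auxiliary vector $b'$ and of the larger space $W$: the linear map $\sigma \colon W \to W$ fixing $C$ and interchanging the two pairs $a \leftrightarrow a'$ and $b \leftrightarrow b'$ \emph{is} a bilinear automorphism, since it swaps the two paired relations $[a',b] = 1$ and $[a,b'] = 1$ and fixes everything else. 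Taking $f \colon V \hookrightarrow W$ the inclusion and $g := \sigma|_V \colon V \to W$ --- both bilinear monomorphisms --- we have $f|_C = g|_C$ and $f(a) = a = \sigma(a') = g(a')$, which witnesses $\gtp(a/C; V) = \gtp(a'/C; V)$. (Alternatively, once \thref{prop:independent-amalgamation} is available this is immediate, since $a \mapsto a'$ is an isomorphism $\linspan{Ca} \cong \linspan{Ca'}$ over $C$.) Thus the substance of the argument is purely organisational: checking that the transposed values can be chosen consistently so that $W$ and $V$ lie in $\Bil^*_K$ and $\sigma$ is a genuine arrow, uniformly in the three cases $\Bil_K$, $\Bils_K$, $\Bila_K$ --- the underlying idea being to make $a$ and $a'$ isotropic and orthogonal to everything, hence interchangeable over $C$, while pairing them differently with $b$ so that they are separated over $Cb$.
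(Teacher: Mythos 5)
Your proof is correct, and its core coincides with the paper's: the same configuration of three new vectors $a, a', b$ over $C$, with $a, a'$ orthogonal to $C$ and to each other, $[a,b]=0$ but $[a',b]=1$; the same reading-off of $a \ind_C^V b$ and $a' \ind_C^V b$ from linear independence of the new basis vectors; and the same form-preservation argument $0=[a,b]=[f(a),f(b)]=[g(a'),g(b)]=[a',b]=1$ separating the Galois types over $Cb$. Where you genuinely diverge is the equality $\gtp(a/C;V)=\gtp(a'/C;V)$: the paper obtains its witnessing cospan by amalgamating $V \supseteq \linspan{Ca} \xrightarrow{f} V$ (with $f(a)=a'$) via \thref{prop:independent-amalgamation}, which at that point in the text is a forward reference, whereas you construct an explicit witness by adjoining a fourth vector $b'$ paired with $a$ exactly as $b$ is paired with $a'$ and checking that the swap $\sigma\colon a\leftrightarrow a'$, $b\leftrightarrow b'$ is a bilinear automorphism of $W$ in each of the three cases, so that the inclusion $V\hookrightarrow W$ together with $\sigma|_V$ witnesses the equality; your case-by-case prescription of the transposed values does make $W$ land in $\Bil_K$, $\Bils_K$, $\Bila_K$ respectively, and $\sigma$ visibly preserves the form since it merely exchanges the two nonzero pairings. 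The trade-off is modest: your route costs an explicit (but routine) verification of $\sigma$ and buys a self-contained proof that does not depend on the amalgamation lemma and uses a slightly smaller ambient space than the amalgam that lemma would produce; as you note in your parenthetical, once \thref{prop:independent-amalgamation} is available the equal-types step collapses to the paper's one-line argument.
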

\begin{proof}
Let $C$ be a bilinear space and introduce new linearly independent vectors $a, a', b$ and set $V = \linspan{Caa'b}$. By construction we then have $a \ind_C^V b$ and $a' \ind_C^V b$. We make $V$ into a bilinear space by setting $[x, c] = [c, x] = 0$ for $x \in \{a, a', b\}$ and $c \in C$. We set $[a', b] = [b, a'] = 1$ (in the alternating case we set $[b, a'] = -1$), for the remainder of the pairs in $\{a, a', b\}$ we take their bilinear product to be $0$.

Define $f: \linspan{Ca} \to V$ to be the identity on $C$ and $f(a) = a'$, and extend linearly. Then $f$ is a bilinear monomorphism. So we can amalgamate $V \supseteq \linspan{Ca} \xrightarrow{f} V$ using \thref{prop:independent-amalgamation} to find $V \xrightarrow{g} W \xleftarrow{h} V$ such that $g|_{\linspan{Ca}} = hf$. So $g$ and $h$ agree on $C$ and $g(a) = h(a')$, and thus $\gtp(a/C; V) = \gtp(a'/C; V)$.

Suppose for a contradiction that $\gtp(a/Cb; V) = \gtp(a'/Cb; V)$, then there are $V \xrightarrow{g} W \xleftarrow{h} V$ that agree on $Cb$ and $g(a) = h(a')$. But then $0 = [a, b] = [g(a), g(b)] = [h(a'), h(b)] = [a', b] = 1$.
\end{proof}
The independence relation on $\Bil_K^*$ is still reasonably nice. We just have to replace \textsc{Stationarity} with some weaker property, namely \textsc{3-amalgamation}. Once again, we give a simplified definition for our situation.
\begin{definition}
\thlabel{def:3-amalgamation}
An independence relation $\ind$ has \textsc{3-amalgamation} if the following holds. Suppose that we have a commuting diagram as below, but without the dashed arrows and without $W$. We view all the arrows as inclusions. Suppose furthermore that $A \ind_D^{V_1} B$, $B \ind_D^{V_3} C$ and $C \ind_D^{V_2} A$. Then we can find the dashed arrows and $W$, such that $A \ind_D^W V_3$ and the resulting diagram commutes.
\[
% https://tikzcd.yichuanshen.de/#N4Igdg9gJgpgziAXAbVABwnAlgFyxMJZABgBoBmAXVJADcBDAGwFcYkQAREAX1PU1z5CKMgEZqdJq3YBBHnxAZseAkXIUJDFm0QgAQvP7KhRUaQBMmqTpABhQ4oErhydeJpbpugGoB9UQ5KgqooZsRW2ux+5oFOJigALBYRXiB+5LHGIchJ4R7W7ADqPBIwUADm8ESgAGYAThAAtkhkIDgQSOa8tQ3NiK3tSOTdIPVNLTSDiKIjY31mbR2ICbO9Q5NLAKyr49MbSNsKc+uLSABsO33m+4gXR2uI16fLlwc3AOw0jFhgNlD0cAAFmUHMdlh8vj8-gDgVBQQ8zhCQN9fux-kCQdxKNwgA
\begin{tikzcd}[row sep=tiny]
                                    & V_2 \arrow[rrr, dashed]  &  &                         & W                      \\
A \arrow[rrr] \arrow[ru]            &                          &  & V_1 \arrow[ru, dashed]  &                        \\
                                    & C \arrow[uu] \arrow[rrr] &  &                         & V_3 \arrow[uu, dashed] \\
D \arrow[rrr] \arrow[ru] \arrow[uu] &                          &  & B \arrow[ru] \arrow[uu] &                       
\end{tikzcd}
\]
\end{definition}
\begin{definition}
\thlabel{def:simple-independence-relation}
An independence relation $\ind$ that satisfies the properties in \thref{fact:linear-independence-facts}, except possibly \textsc{Stationarity}, and also satisfies \textsc{3-amalgamation} is called a \emph{simple independence relation}.
\end{definition}
The possible failure of \textsc{Stationarity} is precisely what distinguishes a simple independence relation from a stable one. This is because we get \textsc{3-amalgamation} from \textsc{Stationarity}, modulo the rest of the properties.
\begin{fact}[{\cite[Proposition 6.16]{kamsma_kim-pillay_2020}}]
\thlabel{fact:stationarity-implies-3-amalgamation}
Any stable independence relation also satisfies \textsc{3-amalgamation}. So every stable independence relation is also simple.
\end{fact}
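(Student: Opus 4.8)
The statement is an instance of the classical phenomenon that in a stable independence relation the \emph{independence theorem} holds over arbitrary bases: the spaces $V_1$, $V_2$, $V_3$ are amalgams of $A$, $B$, $C$ over $D$ witnessing $A \ind_D^{V_1} B$, $B \ind_D^{V_3} C$ and $C \ind_D^{V_2} A$ respectively, and we must glue them into a single $W$ in which $A$ remains independent from $BC$ over $D$. Since only the abstract properties of \thref{fact:linear-independence-facts} are available (together with amalgamation, which is part of the ambient AECat-with-AP setting), the plan is to first manufacture a \emph{generic} copy $A^*$ of $A$ over $D$ that is independent from everything in $V_3$, and then use \textsc{Stationarity} to identify $A^*$ with the copy of $A$ living inside $V_1$ and with the copy living inside $V_2$.

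Concretely I would proceed in three steps. \emph{Step 1.} Using amalgamation together with \textsc{Existence} and \textsc{Extension}, extend $V_3$ (which contains $B$ and $C$ over $D$) to some $W_0 \supseteq V_3$ equipped with a tuple $A^*$ such that $\gtp(A^*/D; W_0) = \gtp(A/D)$ and $A^* \ind_D^{W_0} V_3$; by \textsc{Monotonicity} this gives both $A^* \ind_D^{W_0} B$ and $A^* \ind_D^{W_0} C$. \emph{Step 2.} Amalgamate $W_0$ and $V_1$ over $\linspan{B}$ into a common space $\widehat{W}$. Inside $\widehat{W}$ both $A^*$ and the image of the $V_1$-copy of $A$ have Galois type $\gtp(A/D)$ over $D$, and both are independent from $B$ over $D$ (by \textsc{Invariance}, using \textsc{Symmetry} for $A \ind_D^{V_1} B$), so \textsc{Stationarity} yields that they have the same Galois type over $DB$. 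Realising this equality of Galois types (and amalgamating once more) produces $W_1 \supseteq W_0$ together with an embedding $V_1 \hookrightarrow W_1$ fixing $B$ and $D$ pointwise and sending the $V_1$-copy of $A$ onto $A^*$. \emph{Step 3.} Run the same manoeuvre with $V_2$ and $C$ in place of $V_1$ and $B$, using $A^* \ind_D^{W_0} C$ from Step 1 (preserved to $W_1$ by \textsc{Invariance}), to obtain $W \supseteq W_1$ with an embedding $V_2 \hookrightarrow W$ fixing $C$ and $D$ pointwise and sending the $V_2$-copy of $A$ onto $A^*$.

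It then remains to verify that $W$ with the resulting embeddings of $V_1$, $V_2$, $V_3$ is a valid completion of the diagram of \thref{def:3-amalgamation}. Commutativity is immediate from the bookkeeping: the embeddings of $V_1$ and $V_2$ agree on $A$ (both land on $A^*$) and on $D$, and they agree with the inclusion of $V_3$ on $B$ and on $C$ respectively. Finally $A^* \ind_D^{W_0} V_3$ from Step 1 is preserved along the inclusions $W_0 \subseteq W_1 \subseteq W$ by \textsc{Invariance}, which is precisely the required $A \ind_D^W V_3$.

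The main obstacle I expect is Steps 2 and 3. \textsc{Stationarity} as formulated compares two tuples sitting inside a single ambient space, so the whole argument hinges on first bringing $A^*$, the $V_1$-copy of $A$, and $B$ into one amalgam (and likewise for $C$), and then on turning the resulting equality of Galois types back into an honest embedding of $V_1$ (resp.\ $V_2$) — this is where the amalgamation property of the AECat is genuinely used. The second delicate point is that \textsc{Stationarity} can only be invoked once $A^*$ is known to be independent over $D$ from \emph{both} $B$ and $C$; this is exactly why Step 1 makes $A^*$ generic over all of $V_3$ rather than merely over $B$ or over $C$ separately.
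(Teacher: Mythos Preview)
The paper does not supply a proof of this statement: it is recorded as a \emph{Fact} with a citation to \cite[Proposition 6.16]{kamsma_kim-pillay_2020}, so there is nothing in the paper to compare your argument against directly.

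That said, your proposal is correct and is precisely the standard derivation of \textsc{3-amalgamation} from \textsc{Stationarity} in the abstract setting: manufacture a generic copy $A^*$ of $A$ over $D$ that is independent from all of $V_3$, then use \textsc{Stationarity} over $B$ and over $C$ in turn to align $A^*$ with the $V_1$- and $V_2$-copies of $A$. The two subtleties you already flag are the only real ones. First, after \textsc{Stationarity} yields $\gtp(A^*/BD)=\gtp(A/BD)$ inside an amalgam $\widehat{W}$, you must unwind \thref{def:galois-type} to obtain two arrows $f,g:\widehat{W}\to W_1$ agreeing on $BD$ with $f(A^*)=g(A)$; you then use $f$ to carry $W_0$ (and hence $V_3$, $A^*$, $C$) forward and $g$ to embed $V_1$, so that the bookkeeping for Step~3 is done on the $f$-side. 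Second, before the second application of \textsc{Stationarity} you need both $A^*\ind_D C$ and $\gtp(A^*/D)=\gtp(A/D)$ to hold in the new ambient space; the former is \textsc{Invariance} along the chain of embeddings, and the latter uses that equality of Galois types is preserved along arrows and is transitive under AP. With those details filled in, the final commutativity and independence checks go through exactly as you describe.
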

Simple independence relations are always canonical, in the sense that there can only be one on a given AECat, see the fact below. So $\ind$, being stable and thus in particular simple, is the canonical independence relation on $\Vec_K$. Even though the formulation of the properties is slightly different from the formulation in \cite{kamsma_kim-pillay_2020}, it is an easy exercise to see that they are equivalent.
\begin{fact}[{\cite[Theorem 1.1]{kamsma_kim-pillay_2020}}]
\thlabel{fact:canonicity-of-independence}
Let $\C$ be an AECat with the amalgamation property. If $\ind$ and $\ind'$ are simple independence relations on $\C$ then $\ind = \ind'$.
\end{fact}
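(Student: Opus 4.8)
This is recorded as a \emph{Fact}, so of course I would ultimately cite \cite[Theorem 1.1]{kamsma_kim-pillay_2020}, but here is how I would prove it. The strategy is the usual Kim--Pillay one: exhibit an \emph{intrinsic} ternary relation — one that does not refer to any particular independence relation — with which \emph{every} simple independence relation on $\C$ must coincide. The natural candidate is \emph{non-dividing of Galois types over the base}. So I would fix an arbitrary simple independence relation $\ind$ on $\C$ and prove: for finite tuples $\bar a, \bar b$ and any base $C$ inside an object $V$, one has $\bar a \ind_C^V \bar b$ if and only if $\gtp(\bar a/C\bar b; V)$ does not divide over $C$. Applying this to both $\ind$ and $\ind'$ then gives $\ind = \ind'$ at once. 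First I would carry out the standard reductions: by \textsc{Symmetry} and \textsc{Finite Character} it suffices to treat finite $\bar a, \bar b$; by \textsc{Local Character}, \textsc{Monotonicity} and \textsc{Base-Monotonicity} one may assume $C$ has bounded size; and since there is no monster model in an AECat, I would use the amalgamation property together with directed colimits to build a single, sufficiently saturated and homogeneous object inside which all the extensions and indiscernible sequences below can be realised.

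The engine of the proof is a theory of \emph{$\ind$-Morley sequences}. Using \textsc{Existence} and \textsc{Extension} (and \textsc{Symmetry}, \textsc{Transitivity} to obtain mutual independence) one builds, over a small base $C$, sequences $(\bar b_i)_{i<\kappa}$ with $\gtp(\bar b_i/C) = \gtp(\bar b/C)$ and $\bar b_i \ind_C \bar b_{<i}$; a modelling argument (Erd\H{o}s--Rado) then extracts a $C$-indiscernible sequence with the same Galois Ehrenfeucht--Mostowski type, which by \textsc{Invariance} and \textsc{Finite Character} is still $\ind$-independent over $C$. For the direction ``non-dividing $\Rightarrow$ independence'': given that $\gtp(\bar a/C\bar b)$ does not divide over $C$, pick a $C$-indiscernible $\ind$-Morley sequence $(\bar b_i)_{i<\omega}$ with $\bar b_0=\bar b$; non-dividing hands us some $\bar a'$ realising $\bigcup_i \gtp(\bar a/C\bar b_i)$, so in particular $\gtp(\bar a'/C\bar b)=\gtp(\bar a/C\bar b)$, and after a further extraction one may assume $(\bar b_i)$ is also $C\bar a'$-indiscernible; a standard argument relating Morley sequences to independence (via \textsc{Extension} and \textsc{Transitivity}) then yields $\bar a' \ind_C \bar b$, which \textsc{Invariance} transports to $\bar a$ itself.

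The other direction, ``$\bar a \ind_C \bar b \Rightarrow \gtp(\bar a/C\bar b)$ does not divide over $C$'', is the hard part and I expect it to be the main obstacle; it is the AECat analogue of \emph{Kim's lemma}, and this is precisely where \textsc{3-amalgamation} is used — it is what still makes a \emph{simple}, not necessarily stable, relation work. The plan: if $\gtp(\bar a/C\bar b)$ divides, it does so along \emph{some} $C$-indiscernible sequence in $\gtp(\bar b/C)$, and using \textsc{3-amalgamation} one upgrades this to dividing along \emph{every} $\ind$-Morley sequence $(\bar b_i)_{i<\omega}$ in $\gtp(\bar b/C)$ — the $k$-inconsistency propagates because independent partial realisations can always be amalgamated. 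But \textsc{Extension} and \textsc{Symmetry} applied to $\bar a \ind_C \bar b_0$ produce a conjugate $\bar a'$ of $\bar a$ over $C\bar b_0$ with $\bar a' \ind_C (\bar b_i)_{i<\omega}$, whence (after one more modelling step, by \textsc{Finite Character}, \textsc{Invariance} and indiscernibility) $\bar a'$ realises all the copies $\gtp(\bar a/C\bar b_i)$ simultaneously — contradicting the $k$-inconsistency. The genuinely delicate point throughout, and the reason \cite{kamsma_kim-pillay_2020} needs real work, is doing all of this with Galois types and pushouts in an accessible category with amalgamation rather than with formulas in a monster model: one must express dividing and $k$-inconsistency purely type-theoretically, guarantee that the required indiscernible sequences and amalgams actually exist, and check that \textsc{3-amalgamation} of $\ind$ produces exactly the amalgam needed for the inconsistency-propagation step.
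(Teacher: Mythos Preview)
The paper does not prove this statement at all: it is recorded as a \emph{Fact} and simply cited from \cite[Theorem 1.1]{kamsma_kim-pillay_2020}, so there is no in-paper argument to compare against. Your sketch is a faithful high-level outline of the Kim--Pillay strategy that the cited reference carries out, and you correctly identify both where \textsc{3-amalgamation} enters (the Kim's-lemma direction) and where the real work lies (making dividing, indiscernible extraction, and $k$-inconsistency meaningful for Galois types in an accessible category without compactness). One small refinement worth noting: precisely because of the lack of compactness, the cited paper does not use ordinary $\omega$-length dividing as the intrinsic relation but a variant involving long sequences, which is what makes the Erd\H{o}s--Rado extraction and the local-character step in your ``non-dividing $\Rightarrow$ independence'' argument go through; your final paragraph already anticipates that some such adaptation is needed.
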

The remainder of this section is now devoted to proving that $\ind$ is a simple independence relation on $\Bil^*_K$, and is thus also the canonical independence relation.
\begin{proposition}[Independent amalgamation]
\thlabel{prop:independent-amalgamation}
Let $V, W_1, W_2$ be bilinear spaces. Let $W_1 \xleftarrow{f_1} V \xrightarrow{f_2} W_2$ be bilinear monomorphisms. Then there is a bilinear space $U$ and bilinear monomorphisms $W_1 \xrightarrow{g_1} U \xleftarrow{g_2} W_2$, such that $g_1 f_1 = g_2 f_2$ and $g_1(W_1) \ind_{g_1 f_1(V)}^U g_2(W_2)$. Furthermore, if $V, W_1, W_2$ are symmetric/alternating then we can choose $U$ to be symmetric/alternating.
\end{proposition}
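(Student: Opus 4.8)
The plan is to build $U$ as the pushout of $W_1$ and $W_2$ over $V$ in $\Vec_K$, carrying a bilinear form that restricts correctly to each side. Without loss of generality I assume $f_1$ and $f_2$ are inclusions, so $V \subseteq W_1$ and $V \subseteq W_2$. Fix a basis $B_V$ of $V$ and extend it to a basis $B_V \cup B_1$ of $W_1$ and a basis $B_V \cup B_2$ of $W_2$, with $B_V$, $B_1$, $B_2$ pairwise disjoint. Let $U$ be the vector space with basis $B_V \cup B_1 \cup B_2$, and let $g_1 \colon W_1 \to U$, $g_2 \colon W_2 \to U$ be the evident injective linear maps; they agree on $V$, so $g_1 f_1 = g_2 f_2$. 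Applying characterisation (ii) of \thref{def:linear-independence} with $C_0 = B_V$, $A_0 = B_1$, $B_0 = B_2$ shows immediately that $g_1(W_1) \ind_{g_1 f_1(V)}^U g_2(W_2)$, since $B_V \cup B_1 \cup B_2$ is a linearly independent basis of $U$.

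It remains to equip $U$ with a bilinear form making $g_1$ and $g_2$ bilinear monomorphisms. By \thref{fact:bilinear-data-of-basis-determines-bilinear-form} it suffices to prescribe $[a, b]$ for $a, b$ ranging over $B_V \cup B_1 \cup B_2$: for $a, b \in B_V \cup B_1$ take the value of $[a, b]$ in $W_1$; for $a, b \in B_V \cup B_2$ take the value in $W_2$ (these agree on $B_V \times B_V$ because $V$ is a subspace of both); and for the remaining cross pairs, with one of $a, b$ in $B_1$ and the other in $B_2$, set $[a, b] = 0$. Extend bilinearly. The restriction of the resulting form to $W_1 = \linspan{B_V \cup B_1}$ agrees with the form of $W_1$ on the basis $B_V \cup B_1$, hence equals it by the uniqueness clause of \thref{fact:bilinear-data-of-basis-determines-bilinear-form}; likewise for $W_2$. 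So $g_1$ and $g_2$ are bilinear monomorphisms.

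For the symmetric case, if $V, W_1, W_2$ are symmetric then $[a, b] = [b, a]$ for all basis pairs drawn from $B_V \cup B_1$ and from $B_V \cup B_2$, and $0 = 0$ for the cross pairs, so $U$ is symmetric by \thref{fact:bilinear-data-of-basis-determines-bilinear-form}; the alternating case is identical, using $[a, a] = 0$ and $[a, b] = -[b, a]$ on basis pairs (with $0 = -0$ for cross pairs). There is no genuine obstacle here; the only thing requiring a moment's care is the consistency of the prescribed basis values — which relies on $B_V$ being a common basis segment and the two forms agreeing on $V$ — and the fact that the restrictions of the new form recover the originals, both of which are immediate from \thref{fact:bilinear-data-of-basis-determines-bilinear-form}.
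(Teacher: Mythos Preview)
Your proof is correct and is essentially the same as the paper's: both pick a common basis $B_V$ of $V$, extend to bases $B_V \cup B_1$ and $B_V \cup B_2$ of $W_1$ and $W_2$, form $U$ with basis $B_V \cup B_1 \cup B_2$, and set the cross products from $B_1 \times B_2$ to zero. Your version is slightly more explicit about invoking \thref{fact:bilinear-data-of-basis-determines-bilinear-form} for consistency and for the symmetric/alternating clause, but the argument is identical.
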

\begin{proof}
After renaming elements we may assume $V \subseteq W_1, W_2$ and $W_1 \cap W_2 = V$. So we can take $U = \linspan{W_1 W_2}$ and take $g_1$ and $g_2$ to be the relevant embeddings. So we have $W_1 \ind_V^U W_2$ and we are left to extend the bilinear form to all of $U$. Let $V' \subseteq V$ be a basis for $V$ and for $i \in \{1, 2\}$ let $W'_i \subseteq W_i$ be such that $V'W'_i$ is a basis for $W_i$. Since $W_1 \ind_V^U W_2$ we have that $V'W'_1 W'_2$ is a linearly independent set, and hence a basis for $U$. So we can set $[w_1, w_2] = [w_2, w_1] = 0$ for all $w_1 \in W_1'$ and $w_2 \in W_2'$, and extend linearly in each argument. The final claim about the symmetric/alternating property then immediately follows.
\end{proof}
\begin{proposition}
\thlabel{prop:extension}
The independence relation $\ind$ on $\Bil^*_K$ satisfies \textsc{Extension}.
\end{proposition}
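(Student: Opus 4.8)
The plan is to produce $W$ and $\bar{a}'$ by a single application of independent amalgamation (\thref{prop:independent-amalgamation}), amalgamating $V$ with itself over $\linspan{BC}$. Applying \thref{prop:independent-amalgamation} to the span of inclusions $V \hookleftarrow \linspan{BC} \hookrightarrow V$ yields a bilinear space $W$ and bilinear monomorphisms $g_1, g_2 \colon V \to W$ that agree on $\linspan{BC}$ and satisfy $g_1(V) \ind_{g_1(\linspan{BC})}^W g_2(V)$; in the symmetric/alternating case the same proposition lets us take $W$ symmetric/alternating. Identifying $V$ with $g_1(V) \subseteq W$ (so that $g_1$ becomes the inclusion and $W$ is an extension of $V$), set $\bar{a}' := g_2(\bar{a})$.

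Now $g_2 \colon V \to W$ is a bilinear monomorphism fixing $BC$ pointwise as a subset of $W$ and sending $\bar{a}$ to $\bar{a}'$, so $g_2$ witnesses $\gtp(\bar{a}/BC; V) = \gtp(\bar{a}'/BC; W)$. For the independence statement, note that $D \subseteq V = g_1(V)$ while $\linspan{\bar{a}'C} \subseteq g_2(V)$ (as $\bar{a}' \in g_2(V)$ and $g_2$ fixes $C$); hence \textsc{Symmetry} and \textsc{Monotonicity} (\thref{fact:linear-independence-facts}) applied to $g_1(V) \ind_{g_1(\linspan{BC})}^W g_2(V)$ give $\bar{a}' \ind_{BC}^W BD$. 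Separately, the hypothesis $\bar{a} \ind_C^V B$ transports along the maps witnessing the Galois-type equality, by \textsc{Invariance}, to $\bar{a}' \ind_C^W B$, i.e. $\bar{a}' \ind_C^W BC$. Finally \textsc{Transitivity} (using $C \subseteq \linspan{BC}$) combines $\bar{a}' \ind_C^W BC$ with $\bar{a}' \ind_{BC}^W BD$ to yield $\bar{a}' \ind_C^W BD$, which is what \textsc{Extension} demands.

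Given \thref{prop:independent-amalgamation}, the argument is essentially bookkeeping with the $\Vec_K$-properties of linear independence, so I do not expect a genuine obstacle. The one point that needs care is that independent amalgamation produces independence over exactly the base one amalgamates along, and here we are forced to amalgamate over $\linspan{BC}$ (not over $\linspan{C}$) in order to preserve the Galois type over $BC$; consequently the independence comes out over the base $BC$, and the extra \textsc{Transitivity} step — fed by the transported hypothesis $\bar{a}' \ind_C^W B$ — is what lowers the base back down to $C$.
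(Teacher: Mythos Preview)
Your proof is correct and follows essentially the same route as the paper: apply \thref{prop:independent-amalgamation} over $\linspan{BC}$, then combine the resulting independence over $BC$ with the transported hypothesis $\bar{a}' \ind_C^W B$ via \textsc{Transitivity}. The only cosmetic difference is that the paper amalgamates $\linspan{BC\bar{a}}$ with $V$ rather than $V$ with $V$, and it records the slightly stronger conclusion $\bar{a}' \ind_C^W V$ before specialising to $BD$; neither change affects the argument.
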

\begin{proof}
Let $\bar{a} \ind_C^V B$. We will prove that there is some extension $V \subseteq W$ and a tuple $\bar{a}'$ in $W$ such that $\bar{a}' \ind_C^W V$ and $\gtp(\bar{a}/BC; V) = \gtp(\bar{a}'/BC; W)$. Consider the span of inclusions $\linspan{BC\bar{a}} \supseteq \linspan{BC} \subseteq V$ and use \thref{prop:independent-amalgamation} to find $\linspan{BC\bar{a}} \xrightarrow{f} W \xleftarrow{g} V$ completing this span to a commuting square, where we may assume $g$ to be an inclusion, such that $f(\linspan{BC\bar{a}}) \ind_{\linspan{BC}}^W V$. We thus have the required extension $V \subseteq W$, and we set $\bar{a}' = f(\bar{a})$. Then $\gtp(\bar{a}'/BC; W) = \gtp(\bar{a}/BC; \linspan{BC\bar{a}}) = \gtp(\bar{a}/BC; V)$. It follows that $\bar{a}' \ind_C^W \linspan{BC}$ and applying \textsc{Monotonicity} to $f(\linspan{BC\bar{a}}) \ind_{\linspan{BC}}^W V$ yields $\bar{a}' \ind_{\linspan{BC}}^W V$. So by \textsc{Transitivity} we get $\bar{a}' \ind_C^W V$, as required.
\end{proof}
We prove a stronger version of \textsc{3-amalgamation}, where the resulting cube satisfies two extra instances of independence.
\begin{theorem}[\textsc{3-amalgamation}]
\thlabel{thm:3-amalgamation}
Suppose that we have a commuting diagram as below of bilinear monomorphisms (which we view as inclusions), but without the dashed arrows and without $W$. Suppose furthermore that $A \ind_D^{V_1} B$, $B \ind_D^{V_3} C$ and $C \ind_D^{V_2} A$. Then there is a bilinear space $W$, together with the dashed bilinear monomorphisms, such that $A \ind_D^W V_3$, $B \ind_D^W V_2$ and $C \ind_D^W V_1$ and the resulting diagram commutes.
\[
% https://tikzcd.yichuanshen.de/#N4Igdg9gJgpgziAXAbVABwnAlgFyxMJZABgBoBmAXVJADcBDAGwFcYkQAREAX1PU1z5CKMgEZqdJq3YBBHnxAZseAkXIUJDFm0QgAQvP7KhRUaQBMmqTpABhQ4oErhydeJpbpugGoB9UQ5KgqooZsRW2ux+5oFOJigALBYRXiB+5LHGIchJ4R7W7ADqPBIwUADm8ESgAGYAThAAtkhkIDgQSOa8tQ3NiK3tSOTdIPVNLTSDiKIjY31mbR2ICbO9Q5NLAKyr49MbSNsKc+uLSABsO33m+4gXR2uI16fLlwc3AOw0jFhgNlD0cAAFmUHMdlh8vj8-gDgVBQQ8zhCQN9fux-kCQdxKNwgA
\begin{tikzcd}[row sep=tiny]
                                    & V_2 \arrow[rrr, dashed]  &  &                         & W                      \\
A \arrow[rrr] \arrow[ru]            &                          &  & V_1 \arrow[ru, dashed]  &                        \\
                                    & C \arrow[uu] \arrow[rrr] &  &                         & V_3 \arrow[uu, dashed] \\
D \arrow[rrr] \arrow[ru] \arrow[uu] &                          &  & B \arrow[ru] \arrow[uu] &                       
\end{tikzcd}
\]
Furthermore, if $V_1, V_2, V_3$ are all symmetric/alternating then we can choose $W$ to be symmetric/alternating.
\end{theorem}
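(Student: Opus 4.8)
The plan is to build $W$ explicitly from a coordinated choice of bases and then extend the three bilinear forms to it by hand; no reduction to a finitely generated or ``spanned'' case is needed. First I would fix a basis $D_0$ of $D$ and extend it separately to bases $D_0 A_0$, $D_0 B_0$, $D_0 C_0$ of $A$, $B$, $C$. By characterisation (ii) of \thref{def:linear-independence}, the hypothesis $A \ind_D^{V_1} B$ says that $D_0 A_0 B_0$ is linearly independent in $V_1$, so it extends to a basis $D_0 A_0 B_0 E_1$ of $V_1$; symmetrically $C \ind_D^{V_2} A$ and $B \ind_D^{V_3} C$ yield bases $D_0 A_0 C_0 E_2$ of $V_2$ and $D_0 B_0 C_0 E_3$ of $V_3$. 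After relabelling (or passing to disjoint copies of the $E_i$) we may take $D_0, A_0, B_0, C_0, E_1, E_2, E_3$ pairwise disjoint. Then let $W$ be the $K$-vector space whose basis is their union, and include $V_1, V_2, V_3$ into $W$ by sending each of their chosen basis vectors to itself. These inclusions restrict to the given inclusions of $A, B, C, D$, so the solid diagram commutes, and the three required independences fall out of the construction: for instance $A = \linspan{D_0 A_0}$ and $V_3 = \linspan{D_0 B_0 C_0 E_3}$ sit inside $W$ over the linearly independent set $D_0 A_0 B_0 C_0 E_3$, so $A \cap V_3 = \linspan{D_0} = D$, i.e.\ $A \ind_D^W V_3$; likewise $B \ind_D^W V_2$ and $C \ind_D^W V_1$.

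It then remains to equip $W$ with a bilinear form restricting to the given ones on $V_1, V_2, V_3$. By \thref{fact:bilinear-data-of-basis-determines-bilinear-form} it suffices to define $[u,v]_W$ for $u,v$ ranging over the basis of $W$: if $u$ and $v$ both lie in the chosen basis of a common $V_i$, set $[u,v]_W = [u,v]_{V_i}$, and otherwise set $[u,v]_W = 0$. Each inclusion $V_i \hookrightarrow W$ is then a bilinear monomorphism by construction, and the symmetric (resp.\ alternating) criterion of \thref{fact:bilinear-data-of-basis-determines-bilinear-form} transfers from $V_1, V_2, V_3$ to $W$ — in the alternating case also using that every basis vector of $W$ lies in some $V_i$, hence has $[u,u]_W = 0$ — which yields the final clause of the theorem. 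The point that needs checking is that the first clause is well defined: the only pairs for which it could prescribe two different values are those lying in the bases of two of $V_1, V_2, V_3$ at once, but any two of the bases $D_0 A_0 B_0 E_1$, $D_0 A_0 C_0 E_2$, $D_0 B_0 C_0 E_3$ meet exactly in the basis of $A$, of $B$ or of $C$ (and all three meet in $D_0$), and on those subspaces the relevant forms agree because the solid diagram commutes.

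I expect the genuine content to sit in that last consistency check: once $W$ is arranged so that each $V_i$ embeds, one must make sure the ``new'' vectors $E_i$ of $V_i$ are not accidentally constrained by the forms of the other $V_j$. This is precisely where the three hypotheses $A \ind_D^{V_1} B$, $B \ind_D^{V_3} C$, $C \ind_D^{V_2} A$ are used — to guarantee that $D_0 A_0 B_0$, $D_0 B_0 C_0$ and $D_0 A_0 C_0$ are linearly independent in the respective $V_i$, so that the $E_i$ can be taken disjoint and ``belong'' only to their own $V_i$. A secondary, purely bookkeeping issue is to present $W$ as an honest bilinear space containing $V_1, V_2, V_3$ with matching copies of $A, B, C$; this can be done via the disjoint-union-of-bases description above, or equivalently by realising $W$ as an iterated pushout in $\Vec_K$ built from the independent amalgams of \thref{prop:independent-amalgamation} while tracking the intermediate identifications. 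Everything else is routine linear algebra.
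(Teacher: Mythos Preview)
Your proposal is correct and follows essentially the same approach as the paper: choose a coordinated system of bases $D_0, A_0, B_0, C_0, E_1, E_2, E_3$ using the three independence hypotheses, let $W$ be the vector space on their disjoint union, and define the bilinear form on basis pairs by copying from whichever $V_i$ contains both and using $0$ otherwise. Your treatment of the consistency check and of the symmetric/alternating clause is, if anything, slightly more explicit than the paper's.
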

\begin{proof}
First pick a basis $D'$ of $D$ and let $A', B', C'$ be such that $A'D'$, $B'D'$ and $C'D'$ are bases of $AD$, $BD$ and $CD$ respectively. Because $A \ind_D^{V_1} B$ we have that $A'B'D'$ is a linearly independent set, so we extend it to a basis for $V_1$. So let $V'_1$ such that $A'B'D'V'_1$ is a basis for $V_1$. Similarly we find $V'_2$ and $V'_3$. We may assume that $V'_1, V'_2, V'_3$ are such that $D' A'B'C' V'_1 V'_2 V'_3$ is a basis for $W = \linspan{D' A'B'C' V'_1 V'_2 V'_3}$. This induces canonical inclusions $V_i \subseteq W$. So in particular $V_1 \cap V_2 = A$, $V_1 \cap V_3 = B$ and $V_2 \cap V_3 = C$, as subspaces of $W$. From this the independence relations in the conclusions easily follow. For example: $A \cap V_3 = A \cap V_1 \cap V_3 = A \cap B = D$ hence $A \ind_D^W V_3$, where we used $A \ind_D^{V_1} B$ in the final equality.

We are left to define a bilinear form on $W$. For any $e, e' \in D' A'B'C' V'_1 V'_2 V'_3$ with $e, e' \in V_i$ for some $1 \leq i \leq 3$ the choice for $[e, e']$ is forced, and these choices are compatible by commutativity of the original diagram. For the remainder of the pairs $e, e'$ we set $[e, e'] = 0$ and extend linearly in each argument. The final claim about the symmetric/alternating property then follows by \thref{fact:bilinear-data-of-basis-determines-bilinear-form}.
\end{proof}
\begin{repeated-theorem}[\thref{thm:category-of-bilinear-spaces-is-simple-aecat}]
The category $\Bil^*_K$ is an AECat with the amalgamation property that has a canonical simple unstable independence relation $\ind$ given by linear independence.
\end{repeated-theorem}
\begin{proof}
From \thref{fact:categories-are-aecats} we know that $\Bil^*_K$ is an AECat. We get the amalgamation property from \thref{prop:independent-amalgamation}. That $\ind$ then forms a simple independence relation comes down to checking all the required properties. We only need to verify \textsc{Invariance}, \textsc{Extension} and \textsc{3-amalgamation}, as the remaining properties do not depend on the category we are working in, so we get them directly from \thref{fact:linear-independence-facts}. As any bilinear monomorphism is an injective linear map, \textsc{Invariance} is immediate. The two remaining properties are exactly \thref{prop:extension} and \thref{thm:3-amalgamation}. In \thref{prop:failure-of-stationarity} we saw that \textsc{Stationarity} fails for $\ind$ on $\Bil^*_K$, and so $\ind$ is not stable. Finally, canonicity follows from \thref{fact:canonicity-of-independence}.
\end{proof}
\section{Bilinear spaces in positive logic}
\label{sec:bilinear-spaces-in-positive-logic}
We define and study the positive theory of $K$-bilinear spaces.
\begin{definition}
\thlabel{def:signature-of-bilinear-spaces}
Let $K$ be some field. We define the signature $\L_K$ as follows: $\L_K$ includes the standard signature for $K$-vector spaces, with a function symbol for scalar multiplication for each $\lambda \in K$ and a symbol $\neq$ for inequality. Furthermore, for every $\lambda \in K$ we have a binary relation symbol $[x, y] = \lambda$, which will express that the bilinear product of $x$ and $y$ is $\lambda$.
\end{definition}
We naturally view every $K$-bilinear space as an $\L_K$-structure.
\begin{definition}
\thlabel{def:theories-of-bilinear-spaces}
Let $K$ be some field. We define the following $\L_K$-theories:
\begin{itemize}
\item $T_K$ is the common h-inductive theory of all $K$-bilinear spaces,
\item $T_K^s$ is the common h-inductive theory of all symmetric $K$-bilinear spaces,
\item $T_K^a$ is the common h-inductive theory of all alternating $K$-bilinear spaces.
\end{itemize}
We write $T_K^*$ if a statement applies to all of $T_K$, $T_K^s$ and $T_K^a$.
\end{definition}
Note that having a symbol for inequality forces the homomorphisms between models of $T_K^*$ to be injective.

Any positive existential formula $\phi(\bar{x})$ is equivalent to one of the form
\[
\bigvee_{i = 1}^n \exists \bar{y}_i \psi_i(\bar{x}, \bar{y}_i),
\]
where each $\psi_i(\bar{x}, \bar{y}_i)$ is a conjunction of atomic formulas. We will implicitly use this fact in what follows by assuming all formulas are of this form, and we recall some relevant terminology.
\begin{definition}
\thlabel{def:regular-formula}
A \emph{regular formula} (sometimes also called \emph{pp-formula}) is one of the form $\exists \bar{y} \psi(\bar{x}, \bar{y})$ where $\psi(\bar{x}, \bar{y})$ is a conjunction of atomic formulas.
\end{definition}
\begin{lemma}
\thlabel{lem:negation-of-bilinear-value}
Let $\lambda \in K$ and let $\psi(x, y)$ be a regular formula that contains no linear equations such that $T_K^* \models \neg \exists xy(\psi(x, y) \wedge [x, y] = \lambda)$. Then there is some $\lambda' \in K$ such that $T_K^* \models \forall xy(\psi(x, y) \to [x, y] = \lambda')$. Similarly, for a regular formula $\psi(x)$ that contains no linear equations such that $T_K^* \models \neg \exists xy(\psi(x) \wedge [x, x] = \lambda)$ there is some $\lambda' \in K$ such that $T_K^* \models \forall x(\psi(x) \to [x, x] = \lambda')$.
\end{lemma}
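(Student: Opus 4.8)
The plan is to translate the statement into linear algebra over $K$: the set of values that $[x,y]$ can take on realisations of $\psi$ will turn out to be the image of an affine subspace of a matrix space under a coordinate projection, and such an image is automatically $\emptyset$, a single point, or all of $K$.

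First I would unwind the semantic content. Since $T_K^*$ is by definition the common h-inductive theory of all (symmetric/alternating, as appropriate) $K$-bilinear spaces, and both $\neg\exists xy(\psi\wedge[x,y]=\lambda)$ and $\forall xy(\psi\to[x,y]=\lambda')$ are h-inductive, the first holds iff no such bilinear space has $a,b$ with $\psi(a,b)$ and $[a,b]=\lambda$, and the second holds iff every such space with $\psi(a,b)$ has $[a,b]=\lambda'$. Writing $S\subseteq K$ for the set of values $[a,b]$ takes over all realisations of $\psi$ in (symmetric/alternating) bilinear spaces, the hypothesis is exactly $\lambda\notin S$, and the conclusion is equivalent to $|S|\le 1$ (take $\lambda'$ to be the element of $S$, or arbitrary if $S=\emptyset$). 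So it suffices to prove that $S$ is an affine subspace of the line $K$: then $\lambda\notin S$ rules out $S=K$ and forces $|S|\le 1$.

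To analyse $S$, I would write $\psi(x,y)=\exists\bar z\,\chi(x,y,\bar z)$ with $\chi$ a conjunction of atomic formulas. If $\psi$ has no realisation in any bilinear space of the appropriate type, then $S=\emptyset$ and we are done; so assume it has one. Since $\chi$ has no linear equations, each conjunct is then a linear inequation $\rho_i\ne 0$ — and each such $\rho_i$ must be a nonzero linear combination, since otherwise $\psi$ would be unsatisfiable — or a bilinear relation $[\sigma_j,\tau_j]=\mu_j$ with $\sigma_j,\tau_j$ linear terms, in the variables $v_1=x,\,v_2=y,\,v_3,\dots,v_m$. Introduce formal entries $G_{kl}$ ($1\le k,l\le m$) for $[v_k,v_l]$; expanding by bilinearity, each $[\sigma_j,\tau_j]=\mu_j$ becomes an affine-linear equation $\sum_{k,l}p^{(j)}_kq^{(j)}_l\,G_{kl}=\mu_j$, so the bilinear conjuncts of $\chi$ cut out an affine subspace $A$ (possibly empty) of the space of $m\times m$ matrices over $K$ — of symmetric, resp.\ alternating, matrices in the symmetric, resp.\ alternating, case. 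Now I claim $S=\pi_{12}(A)$, the image of $A$ under the projection to the $(1,2)$-entry. For $\subseteq$, a realisation of $\chi$ in a bilinear space of the appropriate type has a Gram matrix lying in $A$ (so $S=\emptyset$ if $A=\emptyset$). For $\supseteq$, given $G\in A$ let $U_G$ be the bilinear space with basis $v_1,\dots,v_m$ whose form is determined by $[v_k,v_l]:=G_{kl}$, which by \thref{fact:bilinear-data-of-basis-determines-bilinear-form} is symmetric, resp.\ alternating, because $G$ is; then $U_G\models\chi$, since the bilinear conjuncts hold as $G\in A$ and each $\rho_i\ne 0$ holds because $\rho_i$ is a nonzero linear combination of the linearly independent basis vectors $v_k$. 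Hence $G_{12}=[v_1,v_2]\in S$, and since a coordinate projection of an affine subspace of a matrix space over $K$ is an affine subspace of $K$, we are done. The second statement, bounding $[x,x]$ on realisations of $\psi(x)$, would follow by the identical argument with $G_{11}$ in place of $G_{12}$ (and in the alternating case $G_{11}\equiv 0$, so one may take $\lambda'=0$ directly).

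The only real content is the reduction of $\chi$ to linear algebra, and this is where ``no linear equations'' is essential: with linearly independent basis vectors the inequations of $\chi$ are satisfied automatically, so the bilinear conjuncts — affine-linear in the Gram entries — are the only genuine constraints, and $A$ captures exactly them. The symmetric/alternating bookkeeping and the degenerate unsatisfiable cases are then routine.
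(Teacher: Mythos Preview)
Your proposal is correct and follows essentially the same approach as the paper: both reduce the bilinear constraints in $\psi$ to an affine system in the Gram-matrix entries $G_{kl}$ (respectively $u_{ij}$), and both use that the constructed space on a linearly independent basis realises any point of that affine set because the inequations are automatic. The only difference is cosmetic: the paper argues by contradiction (from two distinct achieved values one interpolates to hit $\lambda$), whereas you phrase it directly as ``$S=\pi_{12}(A)$ is an affine subspace of $K$, hence $\lambda\notin S$ forces $|S|\le 1$''.
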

\begin{proof}
The proof below is for the case of $T_K$ and a formula $\psi(x, y)$ in two variables, at the end we discuss how to make the same proof work for a single variable and in $T_K^s$ and $T_K^a$. We can write $\psi(x, y)$ as
\[
\exists \bar{z}( \chi(x, y, \bar{z}) \wedge \bigwedge_{i \in I} [v_i, w_i] = \mu_i ),
\]
where $\chi(x, y, \bar{z})$ is a conjunction of linear inequalities (as $\psi(x, y)$ contains no linear equations) and each $v_i$ and $w_i$ is a linear combination of $x$, $y$ and $\bar{z}$.

We now claim that there is some $\lambda'$ such that for any bilinear space $V$ and any $c, d \in V$ such that $V \models \psi(c, d)$ we have $V \models [c, d] = \lambda'$. This is enough, because then $\forall xy(\psi(x, y) \to [x, y] = \lambda')$ is in $T_K$, as required.

To prove the claim we argue by contradiction. Suppose there are distinct $\lambda_1$ and $\lambda_2$ and bilinear spaces $V_1$ and $V_2$ with $c_1,d_1 \in V_1$ and $c_2, d_2 \in V_2$ such that $V_1 \models \psi(c_1, d_1) \wedge [c_1, d_1] = \lambda_1$ and $V_2 \models \psi(c_2, d_2) \wedge [c_2, d_2] = \lambda_2$. We rename the variables appearing in the quantifier-free part of $\psi$ as follows: $x$ becomes $z_1$, $y$ becomes $z_2$ and then we can enumerate $\bar{z}$ as $z_3, \ldots, z_n$ for some $n$. We introduce a variable $u_{ij}$ for all $1 \leq i,j \leq n$, so letting $u_{ij}$ represent $[z_i, z_j]$ we can view $\bigwedge_{i \in I} [v_i, w_i] = \mu_i$ as a system $S$ of linear equations in variables $(u_{ij})_{1 \leq i,j \leq n}$. This determines an affine space
\[
A = \{ (\alpha_{ij})_{1 \leq i,j \leq n} \in K^{n^2} : (\alpha_{ij})_{1 \leq i,j \leq n} \text{ is a solution to } S \}.
\]
By our assumption about the existence of $V_1$ and $V_2$ we know that there must be solutions $(\alpha_{ij})_{1 \leq i,j \leq n}$ and $(\beta_{ij})_{1 \leq i,j \leq n}$ in $A$ with $\alpha_{1,2} = \lambda_1$ and $\beta_{1,2} = \lambda_2$. The projection of an affine space onto one coordinate must be either a point or all of $K$. So since $\lambda_1$ and $\lambda_2$ are distinct, the projection of $A$ on the coordinate indexed by $(1, 2)$ must be all of $K$. We thus find a solution $(\gamma_{ij})_{1 \leq i,j \leq n}$ with $\gamma_{1,2} = \lambda$. Let $V$ be a vector space with basis $e_1, \ldots, e_n$ and make it into a bilinear space by setting $[e_i, e_j] = \gamma_{ij}$ for all $1 \leq i,j \leq n$. We now have $V \models \psi(e_1, e_2)$, where the existential quantifier over $\bar{z}$ is satisfied by $e_3, \ldots, e_n$. The $\chi$ part is then satisfied because it only contains linear inequalities and $e_1, \ldots, e_n$ are linearly independent, while the part $\bigwedge_{i \in I} [v_i, w_i] = \mu_i$ is satisfied by our choice of $[e_i, e_j]$. At the same time we have $V \models [e_1, e_2] = \lambda$, but this contradicts $T_K \models \neg \exists xy(\psi(x,y) \wedge [x, y] = \lambda)$.

The case where $\psi(x)$ has only one variable is a simpler version of the argument above. We replace any occurrence of the $y$ variable by the $x$ variable, and we will just have that $c_1 = d_1$, $c_2 = d_2$ and $e_1 = e_2$.

To make the above argument work for $T_K^s$ we restrict ourselves to symmetric bilinear spaces in the entire argument. To make sure the $V$ we construct is also symmetric, we can just add equations $u_{ij} = u_{ji}$ to our system $S$ of linear equations. Since the $V_1$ and $V_2$ we find are then assumed to be symmetric bilinear spaces, they still yield solutions to $S$. A similar trick works for $T_K^a$.
\end{proof}
\begin{lemma}
\thlabel{lem:ec-is-bilinear}
Every e.c.\ model of $T_K^*$ is a $K$-bilinear space.
\end{lemma}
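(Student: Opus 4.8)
The plan is to exploit that $M$ is existentially closed, reducing the statement to \thref{lem:negation-of-bilinear-value}. Let $M$ be an e.c.\ model of $T_K^*$. Since $M\models T_K^*$, it is already a $K$-vector space, its symbol $\neq$ is interpreted as genuine inequality, and the binary relation symbols $[x,y]=\lambda$ ($\lambda\in K$) define a single-valued, possibly partial, bilinear form on $M$; all of these facts are h-inductive consequences of the axioms of ($K$-bilinear, symmetric, or alternating) bilinear spaces, hence lie in $T_K^*$. So it suffices to show this partial form is total: then $M$ is a $K$-bilinear space, and the further h-inductive sentences $[x,y]=\lambda\to[y,x]=\lambda$ (resp.\ $[x,x]=0$) present in $T_K^s$ (resp.\ $T_K^a$) make it symmetric (resp.\ alternating). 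Suppose towards a contradiction that the form is not total, witnessed by $a,b\in M$ with $M\not\models[a,b]=\lambda$ for every $\lambda\in K$. Then I apply \thref{def:ec-model}(ii) with $\phi(x,y):=([x,y]=0)$, which $M$ does not satisfy at $(a,b)$, to get a formula $\psi(x,y)$ with $M\models\psi(a,b)$ and $T_K^*\models\neg\exists xy(\psi(x,y)\wedge[x,y]=0)$; replacing $\psi$ by a disjunct realised by $(a,b)$, I may take $\psi$ regular.

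The task is then to contradict this via \thref{lem:negation-of-bilinear-value}, whose hypotheses require the formula to contain no linear equations, so the linear equations in $\psi$ must first be removed. These linear equations express some of the existentially quantified variables of $\psi$ as linear combinations of the remaining ones (and of $x,y$), subject to finitely many consistency conditions that are linear dependencies between $x$ and $y$; since $M\models\psi(a,b)$, these dependencies hold of $(a,b)$, and because there are so few possible linear dependences between two vectors, they amount, modulo $T_K^*$, to one of: no constraint, $x=0$, $y=0$, or $x=\mu y$ for a unique $\mu\in K\setminus\{0\}$. If the conditions force $x=0$ (so $a=0$) or $y=0$ (so $b=0$), then $M\models[a,b]=0$, because $[0,y]=0$ and $[x,0]=0$ hold in every model of $T_K^*$, contradicting the choice of $a,b$. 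Otherwise, eliminating the determined variables rewrites $\psi$, modulo $T_K^*$, as $\psi'(x,y)$ or as $(x=\mu y)\wedge\psi'(x,y)$ with $\psi'$ regular and free of linear equations. In the first subcase $T_K^*\models\neg\exists xy(\psi'(x,y)\wedge[x,y]=0)$, so the two-variable part of \thref{lem:negation-of-bilinear-value} gives $\lambda'\in K$ with $T_K^*\models\forall xy(\psi'(x,y)\to[x,y]=\lambda')$, whence $M\models[a,b]=\lambda'$. In the second subcase, using that $[\mu y,y]=\lambda$ is $T_K^*$-equivalent to $[y,y]=\mu^{-1}\lambda$ by bilinearity, one gets $T_K^*\models\neg\exists y(\chi(y)\wedge[y,y]=0)$ for the regular, linear-equation-free formula $\chi(y):=\psi'(\mu y,y)$, so the one-variable part of \thref{lem:negation-of-bilinear-value} gives $\lambda'\in K$ with $T_K^*\models\forall y(\chi(y)\to[y,y]=\lambda')$; as $M\models\chi(b)$ this yields $M\models[b,b]=\lambda'$ and hence $M\models[a,b]=\mu\lambda'$. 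Either way this contradicts $M\not\models[a,b]=\lambda$ for all $\lambda$.

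So the form on $M$ is total, which finishes the proof. I expect the main obstacle to be exactly the bridge from the arbitrary formula $\psi$ returned by the existential-closedness criterion to the precise hypotheses of \thref{lem:negation-of-bilinear-value}: passing to a regular formula is trivial, but removing the linear equations — with the accompanying case analysis on the linear dependence of $a$ and $b$ and the reduction of the proportional case to the one-variable form of the lemma — is the step that needs genuine care. (An alternative route, extending the partial form on $M$ to a total bilinear form on a vector space extension and pulling the missing values back along the resulting immersion, also works, but it requires a separate extension lemma for partial bilinear forms.)
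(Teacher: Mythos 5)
Your proof is correct and rests on the same two pillars as the paper's own argument: the existential-closedness criterion \thref{def:ec-model}(ii) applied to $\phi(x,y):=([x,y]=0)$, followed by \thref{lem:negation-of-bilinear-value} to force a value for $[a,b]$. Where you genuinely differ is in the step you yourself flagged as delicate, namely how to meet the lemma's no-linear-equations hypothesis. The paper sidesteps your case analysis entirely by first observing that it suffices to define $[a,b]$ for linearly independent $a,b$ and for $a=b$ (fix a basis of $M$; bilinearity, forced by $T_K^*$, then determines the form everywhere), after which the linear equations in the regular disjunct can only serve to eliminate quantified variables and never impose a relation between $a$ and $b$. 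You instead treat an arbitrary pair $(a,b)$ and split on the linear relation the equations force on it: no relation (two-variable form of the lemma), $a=0$ or $b=0$ (where the value is $0$ outright), or $a=\mu y$ with $\mu\neq 0$, which you reduce to the one-variable form of the lemma via the substitution $x\mapsto\mu y$. This case distinction is complete (the degenerate system $x=0\wedge y=0$ falls under your $x=0$ branch), and each branch is sound, since the Gaussian-elimination equivalences and scalar manipulations such as $[\mu y,y]=\lambda\leftrightarrow[y,y]=\mu^{-1}\lambda$ are h-inductive consequences valid in all bilinear spaces and hence lie in $T_K^*$. So the two routes prove the same thing; the paper's basis reduction is a little slicker, while your version is more direct at this point and handles explicitly the proportional case that the paper's reduction never has to see.
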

\begin{proof}
Let $M$ be an e.c.\ model of $T_K^*$. Clearly $T_K^*$ will already specify that $M$ is a $K$-vector space. We need to prove that the binary relations $[x, y] = \lambda$ encode a bilinear form on $M$. For that we only need to check that for every $a, b \in M$ there is some $\lambda \in K$ such that $M \models [a, b] = \lambda$. Then $T_K^*$ will guarantee that this $\lambda$ is unique and that the binary function $M \times M \to K$ encoded by the relation symbols in $\L_K$ is actually a bilinear form. In fact, it is enough to check this for linearly independent $a$ and $b$, and when $a = b$. Then we can take any basis $B$ of $M$ and since $[\cdot, \cdot]$ will then be defined on any $a, b \in B$ it extends to all of $M$, which is forced by $T_K^*$.

If $M \models [a, b] = 0$ then we are done. So suppose that $M \not \models [a, b] = 0$. Then as $M$ is e.c.\ there must be some $\phi(x, y)$ such that $T_K^* \models \neg \exists xy(\phi(x,y) \wedge [x, y] = 0)$ and $M \models \phi(a, b)$. We can write $\phi(x, y)$ as a disjunction of regular formulas. Let $\psi(x, y)$ be a disjunct so that $M \models \psi(a, b)$, then we have $T_K^* \models \neg \exists xy(\psi(x,y) \wedge [x, y] = 0)$. We can write $\psi(x, y)$ as $\exists \bar{z} \chi(x, y, \bar{z})$, where $\chi(\bar{z})$ is a conjunction of atomic formulas. We may assume that $\chi(x, y, \bar{z})$ does not contain any linear equations. This is because we assumed $a$ and $b$ to be linearly independent, or $a = b$ and then we just drop the $y$ variable, and any $z \in \bar{z}$ that is linearly dependent on the remainder of the variables can be eliminated from the quantifier by replacing it by the appropriate linear combination of variables. By \thref{lem:negation-of-bilinear-value} there is then some $\lambda \in K$ such that $T_K^* \models \forall xy(\psi(x, y) \to [x, y] = \lambda)$. We thus have $M \models [a, b] = \lambda$, as required.
\end{proof}
The following is the analogue of being complete for first-order theories, and in fact together with \thref{prop:boolean-or-semi-hausdorff-not-hausdorff} this implies that $T_K^*$ is complete for finite $K$.
\begin{corollary}
\thlabel{cor:jep}
The theory $T_K^*$ has JEP.
\end{corollary}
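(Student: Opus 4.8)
The plan is to reduce JEP to independent amalgamation over the trivial bilinear space. Given two e.c.\ models $M_1$ and $M_2$ of $T_K^*$, the first step is to invoke \thref{lem:ec-is-bilinear} to conclude that both are $K$-bilinear spaces (symmetric, respectively alternating, in the cases of $T_K^s$ and $T_K^a$). The zero subspace $\{0\}$ is a $K$-bilinear space of each of these kinds, and it includes into both $M_1$ and $M_2$ via bilinear monomorphisms, so we obtain a span $M_1 \leftarrow \{0\} \to M_2$ in $\Bil^*_K$.

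Next I would apply \thref{prop:independent-amalgamation} to this span to obtain a $K$-bilinear space $U$ together with bilinear monomorphisms $M_1 \xrightarrow{g_1} U \xleftarrow{g_2} M_2$; the final clause of that proposition guarantees that $U$ can be taken symmetric, respectively alternating, when $M_1$ and $M_2$ are. Since $T_K^*$ is by definition the common h-inductive theory of all $K$-bilinear spaces of the relevant kind, and $U$ is such a space, we get $U \models T_K^*$ for free.

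The only thing left to check is that $g_1$ and $g_2$ are homomorphisms of $\L_K$-structures, not merely maps of bilinear spaces: they preserve the vector-space operations and the scalar-multiplication function symbols because they are linear, they preserve each relation $[x, y] = \lambda$ by the definition of a bilinear homomorphism, and they preserve $\neq$ because they are injective. Hence the span $M_1 \xrightarrow{g_1} U \xleftarrow{g_2} M_2$ witnesses JEP.

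I do not expect a genuine obstacle here; the one point that deserves a moment's care is that JEP only asks for homomorphisms into \emph{some} model of $T_K^*$ (not immersions, and not into an e.c.\ model), so there is no need for $U$ to be existentially closed --- any bilinear amalgam suffices, and \thref{prop:independent-amalgamation} supplies one. One could alternatively bypass the proposition and take $U = M_1 \oplus M_2$ with $[m_1, m_2] = [m_2, m_1] = 0$ for $m_i \in M_i$, but citing \thref{prop:independent-amalgamation} is cleaner and handles the symmetric/alternating bookkeeping automatically.
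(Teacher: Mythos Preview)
Your proposal is correct and follows essentially the same approach as the paper: invoke \thref{lem:ec-is-bilinear} to view the e.c.\ models as bilinear spaces, then amalgamate over the trivial bilinear space via \thref{prop:independent-amalgamation}. The paper compresses this into a single sentence, whereas you spell out the verification that the resulting amalgam is a model of $T_K^*$ and that bilinear monomorphisms are $\L_K$-homomorphisms, but the substance is identical.
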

\begin{proof}
Use that e.c.\ models are $K$-bilinear spaces (\thref{lem:ec-is-bilinear}) to amalgamate (\thref{prop:independent-amalgamation}) over the trivial bilinear space.
\end{proof}
\begin{proposition}
\thlabel{prop:formula-for-linear-independence}
For every $n \geq 1$ there is a formula $\theta_n(x_1, \ldots, x_n)$ such that for every $a_1, \ldots, a_n$ in any e.c.\ model $M$ of $T_K^*$ we have that $M \models \theta_n(a_1, \ldots, a_n)$ iff $a_1, \ldots, a_n$ are linearly independent.
\end{proposition}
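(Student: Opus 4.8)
The plan is to write the formula down explicitly and then verify the two implications separately, the forward (hard) direction being the only place where the e.c.\ hypothesis enters. The crucial observation is that, even though over an infinite field \emph{linear dependence} is not positively definable, \emph{linear independence} of $a_1,\dots,a_n$ can be witnessed positively by the existence of a ``dual system'', i.e.\ vectors $b_1,\dots,b_n$ with $[a_i,b_j]=\delta_{ij}$ (Kronecker delta). Accordingly I would set, writing $\delta_{ij}=1$ if $i=j$ and $\delta_{ij}=0$ otherwise,
\[
\theta_n(x_1,\dots,x_n) \;:=\; \exists y_1\cdots\exists y_n\ \bigwedge_{i,j=1}^{n}\bigl([x_i,y_j]=\delta_{ij}\bigr),
\]
which is a positive existential formula: each $[x_i,y_j]=\delta_{ij}$ is one of the atomic relation symbols of $\L_K$ (with $\delta_{ij}\in K$), and only $\exists$ and $\wedge$ are used.

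For the easy direction, suppose $M$ is any $K$-bilinear space (in particular any e.c.\ model, by \thref{thm:ec-models}) with $M\models\theta_n(\bar a)$, and pick witnesses $b_1,\dots,b_n\in M$. If $\sum_i\lambda_i a_i=0$ then pairing with $b_j$ and using bilinearity gives $0=\sum_i\lambda_i[a_i,b_j]=\lambda_j$ for every $j$, so $a_1,\dots,a_n$ are linearly independent. This half uses nothing about e.c.\ models.

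For the hard direction, let $M$ be an e.c.\ model of $T_K^*$ with $a_1,\dots,a_n$ linearly independent. By \thref{thm:ec-models}, $M$ is a finitely injective $K$-bilinear space (symmetric, resp.\ alternating, in the cases $T_K^s$, $T_K^a$). Put $V=\linspan{a_1,\dots,a_n}\subseteq M$, a finite-dimensional bilinear subspace, and build a finite-dimensional extension $V\subseteq W$ by adjoining new basis vectors $b_1,\dots,b_n$ and extending the form by $[a_i,b_j]=\delta_{ij}$ and $[b_i,b_j]=0$, together with $[b_j,a_i]=\delta_{ij}$ in the symmetric case and $[b_j,a_i]=-\delta_{ij}$ in the alternating case (in the general case the remaining values may be chosen arbitrarily). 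One checks that $W$ lies in the appropriate subcategory; the only computation worth doing is the alternating case, where expanding $[x,x]$ for $x=\sum_i\alpha_i a_i+\sum_j\beta_j b_j$ shows the cross terms $\sum_i\alpha_i\beta_i$ and $-\sum_i\alpha_i\beta_i$ cancel. Then $V\hookrightarrow W$ is a bilinear monomorphism of finite-dimensional bilinear spaces of the right type, so finite injectivity of $M$ produces a bilinear monomorphism $W\to M$ restricting to the inclusion on $V$; its values on $b_1,\dots,b_n$ are the required witnesses, so $M\models\theta_n(\bar a)$.

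I expect the main obstacle to be conceptual rather than computational: recognising that the bilinear form is exactly what converts the ``negative'' condition of linear independence into a positive existential statement, and correctly packaging the finite extension so that finite injectivity of the e.c.\ model applies uniformly across the three variants. The verification that $W$ is alternating whenever $V$ is is the one routine check I would carry out in full.
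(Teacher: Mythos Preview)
Your formula and the verification are both correct, and the dual-basis idea is clean. There is, however, a genuine circularity in the backward direction: you invoke \thref{thm:ec-models} to get finite injectivity of $M$, but in the paper the implication (i)~$\Rightarrow$~(ii) of \thref{thm:ec-models} is proved \emph{using} the formula $\theta_n$ from the present proposition (it is exactly what guarantees the witnesses pulled back into the e.c.\ model remain linearly independent, so that the extension map is injective). So as written your argument begs the question.

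The fix is immediate and costs nothing: instead of appealing to finite injectivity, extend $M$ itself rather than $V$. Pick a basis of $M$ extending $a_1,\dots,a_n$, adjoin fresh basis vectors $b_1,\dots,b_n$, and extend the form by $[a_i,b_j]=\delta_{ij}$, $[b_i,b_j]=0$, $[b_i,v]=0$ for all other basis vectors $v$ of $M$ (with the symmetric/alternating adjustment on $[b_j,a_i]$ as you already describe). This produces a $K$-bilinear space $N\supseteq M$ of the correct type with $N\models\theta_n(\bar a)$; existential closedness of $M$ then yields $M\models\theta_n(\bar a)$ directly. Alternatively, keep your $W$ and amalgamate $M\supseteq V\subseteq W$ via \thref{prop:independent-amalgamation}. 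Either route avoids \thref{thm:ec-models}. (Your citation of \thref{thm:ec-models} in the forward direction, to know $M$ is a bilinear space, should likewise be replaced by \thref{lem:ec-is-bilinear}, which is available at this point.)

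On the comparison: the paper builds $\theta_n$ inductively, at each step adjoining two witnesses $y,z$ that agree on $x_1,\dots,x_n$ but disagree on $x_{n+1}$, and argues directly from existential closedness. Your single-shot dual-basis formula is arguably more transparent and has the incidental benefit that $\theta_1(x)=\exists y\,[x,y]=1$ already forces $x\neq 0$, whereas the paper's base case $\theta_1(x)\equiv(x=x)$ does not by itself exclude the zero vector. Both approaches are short; yours just needs the one-line repair above to stand on its own.
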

\begin{proof}
We will build the formula $\theta_n(x_1, \ldots, x_n)$ by induction on $n$. For $\theta_1(x_1)$ we take $x_1 \neq 0$. Having built $\theta_n(x_1, \ldots, x_n)$ we define $\theta_{n+1}(x_1, \ldots, x_n, x_{n+1})$ as
\[
\theta_n(x_1, \ldots, x_n) \wedge \exists yz \left( \bigwedge_{i = 1}^n ([y, x_i] = 1 \wedge [z, x_i] = 1) \wedge [y, x_{n+1}] = 1 \wedge [z, x_{n+1}] = 0 \right).
\]
Let $M$ be an e.c.\ model of $T_K^*$ and let $a_1, \ldots, a_{n+1} \in M$. Suppose that $M \models \theta_{n+1}(a_1, \ldots, a_{n+1})$. Then by the inductive hypothesis $a_1, \ldots, a_n$ are linearly independent. So it remains to be shown that $a_{n+1}$ is not a linear combination of $a_1, \ldots, a_n$. Suppose for a contradiction that $a_{n+1} = \lambda_1 a_1 + \ldots + \lambda_n a_n$. Let $b$ and $c$ be realisations for the $y$ and $z$ variables respectively. Then as $[b, a_i] = 1$ for all $1 \leq i \leq n$ we must have that $1 = [b, a_{n+1}] = \lambda_1 + \ldots + \lambda_n$. By similar reasoning we also get $0 = [c, a_{n+1}] = \lambda_1 + \ldots + \lambda_n$, and thus $1 = 0$. So we arrive at a contradiction and conclude that $a_1, \ldots, a_{n+1}$ must indeed be linearly independent.

Conversely, suppose that $a_1, \ldots, a_{n+1}$ are linearly independent. Then by the induction hypothesis $M \models \theta_n(a_1, \ldots, a_n)$. Extend $a_1, \ldots, a_{n+1}$ to a basis $B$ of $M$ and add two new independent vectors $b$ and $c$. Let $V = \linspan{B b c}$, so it extends $M$. We extend the bilinear form on $M$ to $V$ as follows. We set $[b, a_i] = [c, a_i] = 1$ for all $1 \leq i \leq n$, and we set $[b, a_{n+1}] = 1$ and $[c, a_{n+1}] = 0$. We set the values for $[a_i, b]$ and $[a_i, c]$ for $1 \leq i \leq n+1$ according to whether we work in $T_K^s$ or $T_K^a$ (in the case of just $T_K$ it does not matter). Then $V \models \theta_{n+1}(a_1, \ldots, a_{n+1})$ because $b$ and $c$ are realisations for the $y$ and $z$ variables. So by existential closedness $M \models \theta_{n+1}(a_1, \ldots, a_{n+1})$.
\end{proof}
We can now give a full characterisation of the e.c.\ models of $T_K^*$, which will use the following definition (borrowing some category-theoretic terminology).
\begin{definition}
\thlabel{def:finitely-injective}
We call a $K$-bilinear space $V$ \emph{finitely injective} if for any finite dimensional $K$-bilinear spaces $A \subseteq B$ and any bilinear monomorphism $f: A \to V$ we can find a bilinear monomorphism $g: B \to V$ such that $g$ extends $f$. In the symmetric/alternating case we require all the spaces involved to be symmetric/alternating.
\end{definition}
\begin{repeated-theorem}[\thref{thm:ec-models}]
The following are equivalent for an $\L_K$-structure $V$:
\begin{enumerate}[label=(\roman*)]
\item $V$ is an e.c.\ model of $T_K^*$,
\item $V$ is a finitely injective $K$-bilinear space,
\item $V$ is an infinite dimensional non-degenerate finitely injective $K$-bilinear space.
\end{enumerate}
If the theory $T_K^*$ in (i) is $T_K^s$ or $T_K^a$ then the conditions in (ii) and (iii) should be further restricted to symmetric or alternating $K$-bilinear spaces respectively.
\end{repeated-theorem}
\begin{proof}
\underline{(ii) $\Leftrightarrow$ (iii).} The direction (iii) $\Rightarrow$ (ii) is trivial, so we prove the other direction. Let $V$ be a finitely injective $K$-bilinear space. Then $V$ is infinite dimensional, as we can embed spaces of arbitrarily large (finite) dimension in it. For any $a \in V$ let $A = \linspan{a}$ and define $B = \linspan{ab}$ where $b$ is some new vector linearly independent from $a$. Make $B$ into a $K$-bilinear space by setting $[a, b] = 1$ and also $[b, a] = \pm 1$ (the exact value depends on whether we are in the symmetric or alternating case). We can then extend the embedding $A \subseteq V$ to some bilinear monomorphism $g: B \to V$, and so we have $[a, g(b)] \neq 0$ and $[g(b), a] \neq 0$. Since $a$ was arbitrary we conclude that $V$ is non-degenerate.

\underline{(i) $\implies$ (ii).} By \thref{lem:ec-is-bilinear} we already know that every e.c.\ model is in fact a $K$-bilinear space. Let $A \subseteq B$ be finite dimensional $K$-bilinear spaces and let $f: A \to V$ be a bilinear monomorphism. Let $\bar{a}$ be a basis for $A$ and extend it to a basis $\bar{a}\bar{b}$ for $B$. Let $\bar{x}$ and $\bar{y}$ be variables matching $\bar{a}$ and $\bar{b}$ respectively. Let $\chi(\bar{x}, \bar{y})$ be a conjunction of binary relation symbols $[\cdot, \cdot] = \lambda$ capturing all the bilinear products in $\bar{a}\bar{b}$. Amalgamate $V \xleftarrow{f} A \subseteq B$ to $V \xrightarrow{h} W \supseteq B$, using \thref{prop:independent-amalgamation}. We may assume that $W$ is an e.c.\ model (otherwise complete it to one). Consider the formula $\phi(\bar{x}, \bar{y})$ given by $\theta_n(\bar{x}, \bar{y}) \wedge \chi(\bar{x}, \bar{y})$, where $\theta_n$ is the formula from \thref{prop:formula-for-linear-independence} capturing linear independence. Then $W \models \phi(\bar{a}, \bar{b})$ and thus $W \models \exists \bar{y} \phi(\bar{a}, \bar{y})$. As $\bar{a} = hf(\bar{a})$ and $h: V \to W$ is an immersion we have $V \models \exists \bar{y} \phi(f(\bar{a}), \bar{y})$. We thus find $\bar{c} \in V$ with $V \models \phi(f(\bar{a}), \bar{c})$. We can now extend $f$ to $g: B \to V$ by setting $g(\bar{b}) = \bar{c}$ and extend linearly.

\underline{(ii) $\implies$ (i).} Let $V \subseteq W$ be an extension and $W \models \phi(\bar{a})$ for some $\bar{a} \in V$. We may assume that $W$ is an e.c.\ model (otherwise complete it to one), so in particular it is a $K$-bilinear space. Write $\phi(\bar{x})$ as $\exists \bar{y} \psi(\bar{x}, \bar{y})$, where $\psi(\bar{x}, \bar{y})$ is quantifier-free. Let $\bar{b} \in W$ be such that $W \models \psi(\bar{a}, \bar{b})$. Define $A = \linspan{\bar{a}}$ and $B = \linspan{\bar{a}\bar{b}}$. So $A \subseteq V$ and $A \subseteq B$ are finite dimensional $K$-bilinear spaces. As $V$ is finitely injective we find $g: B \to V$ extending the inclusion $A \subseteq V$. As $\psi$ is quantifier-free we have $B \models \psi(\bar{a}, \bar{b})$ and thus $V \models \psi(\bar{a}, g(\bar{b}))$. So $V \models \phi(\bar{a})$, as required.
\end{proof}
\begin{theorem}
\thlabel{thm:quantifier-free-type-determines-type}
In $T_K^*$ all types are determined by their quantifier-free part.
\end{theorem}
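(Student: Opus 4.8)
The plan is to convert equality of quantifier-free types into an isomorphism of finitely generated subspaces, glue the two e.c.\ models along it with independent amalgamation, complete the result to an e.c.\ model, and then use that homomorphisms out of e.c.\ models are immersions. Concretely, let $\bar a \in V$ and $\bar a' \in V'$ be tuples in e.c.\ models of $T_K^*$, let $C$ be a common subset of $V$ and $V'$, and assume $\qftp(\bar a/C) = \qftp(\bar a'/C)$; I want $\tp(\bar a/C) = \tp(\bar a'/C)$.

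First I would observe that $V$ and $V'$ are $K$-bilinear spaces (\thref{lem:ec-is-bilinear}), so the $\L_K$-substructures generated by $C\bar a$ and by $C\bar a'$ are exactly the subspaces $\linspan{C\bar a} \subseteq V$ and $\linspan{C\bar a'} \subseteq V'$ carrying the restricted bilinear forms. Equality of quantifier-free types then says precisely that the assignment fixing $C$ pointwise and sending $a_i \mapsto a_i'$ extends to an isomorphism of bilinear spaces $\sigma \colon \linspan{C\bar a} \to \linspan{C\bar a'}$: since a linear equation holds of $\bar a$ over $C$ iff it holds of $\bar a'$ over $C$, the map $\sigma$ is a well-defined linear isomorphism onto $\linspan{C\bar a'}$, and since the atomic formulas $[x,y] = \lambda$ behave the same way, $\sigma$ preserves the bilinear form. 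This is routine, but it is the one step where one must be a bit careful about the exact shape of $\L_K$-terms and atomic formulas.

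Next I would amalgamate. Apply \thref{prop:independent-amalgamation} to the span of bilinear monomorphisms $V \hookleftarrow \linspan{C\bar a} \to V'$, where the left arrow is the inclusion and the right arrow is $\sigma$ composed with the inclusion $\linspan{C\bar a'} \hookrightarrow V'$. This yields a $K$-bilinear space $U$ --- symmetric, resp.\ alternating, when we are working with $T_K^s$, resp.\ $T_K^a$, by the final clause of \thref{prop:independent-amalgamation} together with \thref{thm:ec-models} --- and bilinear monomorphisms $g_1 \colon V \to U$ and $g_2 \colon V' \to U$ agreeing on $\linspan{C\bar a}$; in particular $g_1(c) = g_2(c)$ for all $c \in C$ and $g_1(a_i) = g_2(\sigma(a_i)) = g_2(a_i')$. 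Since $U$ is a $K$-bilinear space it models $T_K^*$, so by \thref{fact:complete-to-ec-model} there is a homomorphism $j \colon U \to N$ with $N$ an e.c.\ model of $T_K^*$. Every bilinear monomorphism is an $\L_K$-homomorphism, as it preserves all atomic formulas (linear equations, the relations $[x,y] = \lambda$, and $\neq$) and hence all positive formulas; so $h = j g_1 \colon V \to N$ and $h' = j g_2 \colon V' \to N$ are homomorphisms into a model of $T_K^*$ that agree on $C$ and satisfy $h(\bar a) = h'(\bar a')$.

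Finally, since $V$ and $V'$ are e.c.\ models, $h$ and $h'$ are immersions (\thref{def:ec-model}). Given a formula $\phi(\bar x)$ with parameters in $C$, if $V \models \phi(\bar a)$ then $N \models \phi(h(\bar a))$ since $h$ is a homomorphism fixing $C$; as $h(\bar a) = h'(\bar a')$ and $h, h'$ agree on the parameters, this is the statement $N \models \phi(h'(\bar a'))$, and then $V' \models \phi(\bar a')$ because $h'$ is an immersion. By symmetry the reverse implication holds too, so $\tp(\bar a/C) = \tp(\bar a'/C)$. I expect the write-up to be short: the substantive work --- amalgamating two e.c.\ models over a common subspace, which is exactly what upgrades ``same quantifier-free type'' to ``same type'' --- is already done in \thref{prop:independent-amalgamation}, and the only other point needing care is pulling the isomorphism $\sigma$ out of the quantifier-free type.
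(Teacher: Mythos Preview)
Your argument is correct and follows essentially the same route as the paper: build a bilinear monomorphism between the generated subspaces from equality of quantifier-free types, amalgamate via \thref{prop:independent-amalgamation}, and use that e.c.\ models make homomorphisms into models of $T_K^*$ into immersions. The only superfluous step is the detour through \thref{fact:complete-to-ec-model}: since $U$ is already a $K$-bilinear space and hence a model of $T_K^*$, the bilinear monomorphisms $g_1,g_2$ are themselves homomorphisms into a model of $T_K^*$, so they are immersions directly and there is no need to pass to an e.c.\ $N$.
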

\begin{proof}
Let $\bar{a}$ be some tuple in some e.c.\ model $M$ and let $\bar{b}$ be a tuple in some e.c.\ model $N$, such that $\qftp(\bar{a}) = \qftp(\bar{b})$. Define a function $f: \linspan{\bar{a}} \to N$ by $f(\bar{a}) = \bar{b}$ and extend linearly. Then $f$ is a bilinear monomorphism because $\qftp(\bar{a}) = \qftp(\bar{b})$. So we can amalgamate, using \thref{prop:independent-amalgamation}, to find bilinear monomorphisms $M \xrightarrow{g} V \xleftarrow{h} N$ such that $g(\bar{a}) = h(\bar{b})$. As $M$ and $N$ are e.c.\ models, $g$ and $h$ are immersions, so $\tp(\bar{a}) = \tp(\bar{b})$ follows.
\end{proof}
\begin{corollary}
\thlabel{cor:all-types-isolated}
Modulo $(T_K^*)^\ec$ every type in finitely many variables without parameters is equivalent to a formula. If $K$ is finite, this formula can be taken to be quantifier-free.
\end{corollary}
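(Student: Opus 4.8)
The plan is to invoke \thref{thm:quantifier-free-type-determines-type} to reduce the statement to the following: for a type $p(\bar x)$ with $\bar x = (x_1, \dots, x_n)$, produce a single formula $\phi(\bar x)$ that, in every e.c.\ model, defines exactly the realisations of the quantifier-free part of $p$. Since a quantifier-free type over $\emptyset$ records only the \emph{linear type} of a tuple $\bar a$ (which linear combinations of $\bar a$ vanish) together with the bilinear products $[a_i,a_j]$, and since by \thref{fact:bilinear-data-of-basis-determines-bilinear-form} the latter are determined by their values on a basis of $\linspan{\bar a}$, all of this should be encodable by a single formula.

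Concretely, fix a realisation $\bar a$ of $p$ in an e.c.\ model $M$, choose $i_1 < \dots < i_m$ so that $\bar a' = (a_{i_1},\dots,a_{i_m})$ is a basis of $\linspan{\bar a}$, write $a_j = \sum_k \mu_{jk} a_{i_k}$ for $j \notin \{i_1,\dots,i_m\}$, and set $\lambda_{kl} = [a_{i_k}, a_{i_l}]$. I would take $\phi(\bar x)$ to be the conjunction of $\theta_m(x_{i_1},\dots,x_{i_m})$ from \thref{prop:formula-for-linear-independence}, the linear equations $x_j = \sum_k \mu_{jk} x_{i_k}$, and the atomic formulas $[x_{i_k}, x_{i_l}] = \lambda_{kl}$. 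When $K$ is finite, $\theta_m$ can be replaced by the quantifier-free formula $\bigwedge_{\bar\nu \in K^m \setminus \{0\}} \sum_k \nu_k x_{i_k} \ne 0$, so $\phi$ becomes quantifier-free in that case.

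Then $\phi \in p$ is immediate (using \thref{prop:formula-for-linear-independence} for the $\theta_m$ conjunct). Conversely, if $N$ is an e.c.\ model and $\bar b \in N$ with $N \models \phi(\bar b)$, then $b_{i_1},\dots,b_{i_m}$ are linearly independent (\thref{prop:formula-for-linear-independence}) and the $b_j$ are the prescribed combinations, so $a_i \mapsto b_i$ extends to a linear isomorphism $\linspan{\bar a} \to \linspan{\bar b}$; by \thref{fact:bilinear-data-of-basis-determines-bilinear-form} together with the conjuncts $[x_{i_k}, x_{i_l}] = \lambda_{kl}$ this is even a bilinear isomorphism. Hence $\bar a$ and $\bar b$ satisfy the same atomic formulas, so $\qftp(\bar a) = \qftp(\bar b)$, and \thref{thm:quantifier-free-type-determines-type} yields $\tp(\bar b) = p$. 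Thus $\phi$ defines the realisations of $p$ in every e.c.\ model, i.e.\ $\phi$ is equivalent to $p$ modulo $(T_K^*)^\ec$.

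The main point requiring care, and the reason $\phi$ is in general only positive existential rather than quantifier-free, is the linear-independence clause: over an infinite $K$ one cannot assert linear independence of $m$ vectors with a quantifier-free positive formula (it would need infinitely many inequalities), so $\theta_n$ is genuinely needed there. One should also check that the finite data $(\mu_{jk}),(\lambda_{kl})$ really determines $\qftp(\bar a)$: an atomic formula in $\bar x$ is either a linear (in)equality — which, after substituting the $\mu_{jk}$, reduces to an (in)equality among $\bar a'$ decided by linear independence — or of the form $[v,w] = \lambda$ with $v,w$ linear combinations of $\bar x$, whose truth is computed from the $\mu_{jk}$ and $\lambda_{kl}$ by bilinearity; in both cases $\bar b$ behaves identically to $\bar a$, and the symmetric/alternating variants need no extra work.
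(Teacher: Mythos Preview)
Your proof is correct and follows essentially the same approach as the paper: reduce to quantifier-free types via \thref{thm:quantifier-free-type-determines-type}, then build $\phi$ as the conjunction of $\theta_m$ on a maximal linearly independent subtuple, the linear equations expressing the remaining coordinates, and the bilinear products on the subtuple. Your write-up is in fact more detailed than the paper's, which simply states the formula and leaves the verification implicit.
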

\begin{proof}
Let $\bar{a}$ be a finite tuple in an e.c.\ model $M$. By \thref{thm:quantifier-free-type-determines-type} it is enough to construct a formula $\phi(\bar{x})$ that is equivalent to $\qftp(\bar{a})$ (modulo $(T_K^*)^\ec$). Let $\bar{a}'$ be a maximal linearly independent subtuple of $\bar{a}$. Let $\psi(\bar{x})$ be a conjunction of linear equations capturing how the remainder of $\bar{a}$ depends on $\bar{a}'$. Let $\chi(\bar{x})$ be a conjunction of binary relation symbols $[\cdot, \cdot] = \lambda$ capturing the bilinear products in $\bar{a}'$. Then the formula $\theta_n(\bar{x}') \wedge \psi(\bar{x}) \wedge \chi(\bar{x}')$ is the $\phi(\bar{x})$ we are looking for, where $\theta_n$ is the formula from \thref{prop:formula-for-linear-independence} capturing linear independence. The final claim follows from the fact that linear independence over a finite field can be expressed by a quantifier-free formula, so we can replace $\theta_n$ by this quantifier-free formula.
\end{proof}
\begin{remark}
\thlabel{rem:quantifier-free-type-vs-quantifier-elimination}
In full first-order logic we have that if all types are determined by their quantifier-free part, then the theory has quantifier elimination, see for example \cite[Theorem 8.4.1]{hodges_model_1993}. In positive logic this is no longer true. As we will see below, $T_K^*$ for infinite $K$ is an example. By \thref{thm:quantifier-free-type-determines-type} we do have that in $T_K^*$ every type is determined by its quantifier-free part. However, \thref{thm:quantifier-elimination} shows that we do not have quantifier elimination.
\end{remark}
\begin{definition}
\thlabel{def:positive-quantifier-elimination}
We say that a theory $T$ has \emph{positive quantifier elimination} if for every formula $\phi(\bar{x})$ there is some quantifier-free formula $\psi(\bar{x})$ that is equivalent to $\phi(\bar{x})$ modulo $T^\ec$.
\end{definition}
\begin{theorem}
\thlabel{thm:quantifier-elimination}
The theory $T_K^*$ has positive quantifier elimination iff $K$ is finite.
\end{theorem}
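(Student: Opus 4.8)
The plan is to prove the two implications separately; the substance is all in the ``only if'' direction, which I would do by contraposition.

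\emph{If $K$ is finite.} Here I would argue that there are only finitely many types in a fixed finite tuple of variables $\bar x = (x_1,\dots,x_n)$: by \thref{thm:quantifier-free-type-determines-type} a type is determined by its quantifier-free part, and a quantifier-free type in $\bar x$ is in turn determined by the subspace $\{(\lambda_1,\dots,\lambda_n)\in K^n : \sum_i\lambda_i x_i = 0\}$ of $K^n$ together with the values $[x_i,x_j]\in K$ --- finitely much data when $K$ is finite. By the last sentence of \thref{cor:all-types-isolated} each such type $p$ is equivalent modulo $(T_K^*)^\ec$ to a quantifier-free formula $\rho_p(\bar x)$. Since whether a formula $\phi(\bar x)$ holds of a tuple in an e.c.\ model depends only on its type, $\phi(\bar x)$ is equivalent modulo $(T_K^*)^\ec$ to the finite disjunction $\bigvee\{\rho_p(\bar x) : \phi(\bar x)\in p\}$, which is again quantifier-free. (Recall $\neq$ is in $\L_K$ and, over a finite field, ``$[x,y]\neq\lambda$'' is the finite disjunction $\bigvee_{\mu\neq\lambda}[x,y]=\mu$, so these $\rho_p$ really are positive quantifier-free formulas.)

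\emph{If $K$ is infinite.} I claim the formula $\theta_2(x,y)$ of \thref{prop:formula-for-linear-independence} --- which in every e.c.\ model defines linear independence of $x$ and $y$ --- is not equivalent modulo $(T_K^*)^\ec$ to any quantifier-free formula, which gives the contrapositive. Suppose it is, say to $\rho(x,y) = \bigvee_i C_i(x,y)$ in disjunctive normal form, each $C_i$ a conjunction of atomic $\L_K$-formulas. Fix an e.c.\ model $M$; by \thref{thm:ec-models} it is finitely injective, so it contains a totally isotropic linearly independent pair $a,b$ (embed the two-dimensional bilinear space with identically-zero form, which is legitimate in all three cases). Then $M\models\rho(a,b)$, so some conjunct $C:=C_i$ holds of $a,b$. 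Now I inspect the atomic conjuncts of $C$: a linear equation true of the independent pair $a,b$ is trivial; a linear inequality $\lambda x+\mu y\neq 0$ forces $(\lambda,\mu)\neq(0,0)$; and a bilinear conjunct $[\alpha x+\beta y,\gamma x+\delta y]=\nu$ forces $\nu=0$, since expanding it at $a,b$ gives a combination of the four (vanishing) bilinear products. So modulo $(T_K^*)^\ec$, $C$ reduces to finitely many inequalities $\lambda_j x+\mu_j y\neq 0$ with $(\lambda_j,\mu_j)\neq(0,0)$ together with finitely many conditions of the form $[\,\cdot\,,\,\cdot\,]=0$. Finally I pick $u\in M$ with $u\neq 0$ and $[u,u]=0$ (again using finite injectivity, or automatic in the alternating case) and test $C$ on the \emph{linearly dependent} pair $(u,tu)$ for a scalar $t\in K$: each bilinear conjunct evaluates to $(\alpha_k+\beta_k t)(\gamma_k+\delta_k t)[u,u]=0$, and the inequality $\lambda_j x+\mu_j y\neq 0$ becomes $(\lambda_j+\mu_j t)u\neq 0$, which holds unless $\lambda_j+\mu_j t=0$; since $(\lambda_j,\mu_j)\neq(0,0)$ this rules out only finitely many values of $t$. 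As $K$ is infinite, choose $t$ avoiding all of them; then $M\models C(u,tu)$, hence $M\models\rho(u,tu)$, hence $M\models\theta_2(u,tu)$, contradicting that $u$ and $tu$ are linearly dependent.

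\emph{Main obstacle.} The hard part will be the normal-form bookkeeping in the infinite case: one must check that every atomic $\L_K$-formula in two variables is a linear equation, a linear inequality, or a bilinear-value statement, and that after evaluating at a totally isotropic independent pair each conjunct collapses exactly as claimed --- this is precisely where it matters that $\neq$ is a primitive atomic relation of $\L_K$, so that inequalities cannot smuggle in extra information. The only other point needing care is the (routine) existence of the auxiliary isotropic vectors, which the characterisation of e.c.\ models by finite injectivity in \thref{thm:ec-models} supplies.
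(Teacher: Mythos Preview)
Your proof is correct. The finite-field direction is essentially the paper's argument verbatim: finitely many $n$-types, each equivalent to a quantifier-free formula by \thref{cor:all-types-isolated}, hence every formula is a finite disjunction of these.

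For the infinite-field direction your route is genuinely different and more direct. The paper's argument is structural: writing a hypothetical quantifier-free equivalent of $\theta_n$ as $\bigvee_\ell \psi_\ell$, it encodes the bilinear atoms of each $\psi_\ell$ as a linear system in variables $u_{ij}$ standing for $[x_i,x_j]$, giving an affine subspace $A_\ell\subseteq K^{n^2}$, and then splits into two cases (some $A_\ell$ is all of $K^{n^2}$, versus all are proper), each producing a counterexample. Your argument instead fixes a single concrete witness --- a totally isotropic independent pair $(a,b)$ --- to force the bilinear atoms in the surviving disjunct to have right-hand side $0$, and then slides along the one-parameter family $(u,tu)$ of dependent pairs inside an isotropic line to defeat the finitely many linear inequalities. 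This avoids the case split entirely and replaces ``a finite union of proper affine subspaces misses a point of $K^{n^2}$'' by the simpler ``finitely many scalars do not exhaust $K$''. The price is that your argument is written for $n=2$, whereas the paper's works uniformly for all $n\geq 2$; but since one failing $n$ suffices, this costs nothing. A tiny economy: you need not invoke finite injectivity a second time to produce $u$ with $[u,u]=0$, since your $a$ already does the job.
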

\begin{proof}
We first prove the right to left direction. The quantifier-free type of any $n$-tuple is fully determined by any linear dependencies and the bilinear products in that tuple. As $K$ is finite, there are only finitely many possibilities for this for a fixed $n$. So there are only finitely many quantifier-free $n$-types, and hence finitely many $n$-types by \thref{thm:quantifier-free-type-determines-type}.

Let now $\phi(\bar{x})$ be some formula and write $[\phi(\bar{x})]$ for the set of all types that contain $\phi(\bar{x})$. By the above discussion $[\phi(\bar{x})]$ is finite and, by \thref{cor:all-types-isolated}, for each $p \in [\phi(\bar{x})]$ there is some quantifier-free $\chi_p(\bar{x})$ that is equivalent to $p$. It then immediately follows that $\phi(\bar{x})$ is equivalent to $\bigvee_{p \in [\phi(\bar{x})]} \chi_p(\bar{x})$, modulo $(T_K^*)^\ec$.

We now prove the contrapositive of the converse. So let $K$ be infinite, we prove that for any $n \geq 2$ the formula $\theta_n$ from \thref{prop:formula-for-linear-independence} is not equivalent to a quantifier-free formula, modulo $(T_K^*)^\ec$. We give a proof for $T_K$ and at the end of the proof we describe how to make the proof work for $T_K^s$ and $T_K^a$. Let $n \geq 2$ and suppose that $\theta_n(x_1, \ldots, x_n)$ is equivalent (modulo $T_K^\ec$) to some quantifier free formula $\psi(x_1, \ldots, x_n)$. We can write $\psi$ as a disjunction $\bigvee_{\ell = 1}^k \psi_\ell(x_1, \ldots, x_n)$, where each $\psi_\ell$ is a conjunction of atomic formulas.

Let $1 \leq \ell \leq k$. For each $1 \leq i,j \leq n$ introduce a variable $u_{ij}$ that will represent $[x_i, x_j]$. The atomic formulas in $\psi_\ell$ of the form $[t, s] = \lambda$, where $\lambda \in K$ and $t$ and $s$ are linear combinations of $x_1, \ldots, x_n$, will then determine a linear system of equations $S_\ell$ in variables $(u_{ij})_{1 \leq i,j \leq n}$. We define the affine space
\[
A_\ell = \{ (\alpha_{ij})_{1 \leq i,j \leq n} \in K^{n^2} : (\alpha_{ij})_{1 \leq i,j \leq n} \text{ is a solution to } S_\ell \}.
\]
We distinguish two cases.
\begin{enumerate}[label=(\arabic*)]
\item There is some $\ell$ such that $A_\ell$ is the entire space $K^{n^2}$. This means that $\psi_\ell$ is (equivalent to) some formula purely in the language of vector spaces. Then as linear independence is not definable in vector spaces over an infinite field, there is some vector space $V$ with $v_1, \ldots, v_n \in V$ such that $V \models \psi_\ell(v_1, \ldots, v_n)$ while $v_1, \ldots, v_n$ are not linearly independent. We make $V$ into a bilinear space (the choice of the bilinear form does not matter) and then extend it to an e.c.\ model $M$. This process does not invalidate the truth of $\psi_\ell(v_1, \ldots, v_n)$. So $M \models \psi_\ell(v_1, \ldots, v_n)$ and hence $M \models \psi(v_1, \ldots, v_n)$. However, we do not have $M \models \theta_n(v_1, \ldots, v_n)$, so $\theta_n$ cannot be equivalent to $\psi$ in all e.c.\ models.
\item Every $A_\ell$ is a proper subspace of $K^{n^2}$. As $K$ is infinite, $A = \bigcup_{1 \leq \ell \leq k} A_\ell$ is still a proper subset of $K^{n^2}$. Pick some $(\alpha_{ij})_{1 \leq i,j \leq n} \in K^{n^2} - A$. Let $V = \linspan{a_1, \ldots, a_n}$ be an $n$-dimensional vector space with basis $a_1, \ldots, a_n$. Make $V$ into a bilinear space by setting $[a_i, a_j] = \alpha_{ij}$ for all $1 \leq i,j \leq n$. Extend $V$ to some e.c.\ model $M \supseteq V$. We now have $M \models \theta_n(a_1, \ldots, a_n)$, as $a_1, \ldots, a_n$ are linearly independent. However, for every $1 \leq \ell \leq k$ we have that $([a_i, a_j]_{ij})_{1 \leq i,j \leq n} = (\alpha_{ij})_{1 \leq i,j \leq n}$ is not a solution to $S_\ell$, so $M \not \models \psi_\ell(a_1, \ldots, a_n)$. Hence $M \not \models \psi(x_1, \ldots, x_n)$, which contradicts that $\theta_n(x_1, \ldots, x_n)$ and $\psi(x_1, \ldots, x_n)$ are equivalent in all e.c.\ models.
\end{enumerate}
A similar proof works for $T_K^s$, we just restrict the variables $(u_{ij})_{1 \leq i,j \leq n}$ and the corresponding systems $S_\ell$ and affine subspaces $A_\ell$ to those $i$ and $j$ with $i \leq j$. This suffices as they will encode all the necessary information for a symmetric bilinear form. Similarly, for $T_K^a$ we restrict things to those $i$ and $j$ where $i < j$.
\end{proof}
\begin{proposition}
\thlabel{prop:boolean-or-semi-hausdorff-not-hausdorff}
If $K$ is finite then $T_K^*$ is Boolean. If $K$ is infinite then $T_K^*$ is semi-Hausdorff, but not Hausdorff.
\end{proposition}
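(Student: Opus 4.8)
The plan is to handle the two halves of the statement separately, since the finite and infinite cases are of quite different character. For finite $K$ I would read off Booleanness from the analysis already carried out for \thref{thm:quantifier-elimination}: there it is shown that for finite $K$ there are only finitely many quantifier-free types in each finite tuple of variables, hence — by \thref{thm:quantifier-free-type-determines-type} — only finitely many types, and by the proof of \thref{cor:all-types-isolated} each type $p$ is isolated by a \emph{quantifier-free} positive formula $\chi_p$ (using that over a finite field linear independence is quantifier-free definable). Given a positive formula $\phi(\bar x)$ in finitely many variables, write $[\phi]$ for the finite set of types containing it; then modulo $(T_K^*)^{\ec}$ one has $\phi(\bar x)\equiv\bigvee_{p\in[\phi]}\chi_p(\bar x)$, and since the whole type space is finite also $\neg\phi(\bar x)\equiv\bigvee_{p\notin[\phi]}\chi_p(\bar x)$, which is a positive formula. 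So $T_K^*$ is Boolean; this step is essentially bookkeeping on top of results already in hand.

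For infinite $K$ and semi-Hausdorffness: by \thref{thm:quantifier-free-type-determines-type} it suffices to produce a partial type $\Omega(\bar x,\bar y)$ equivalent to $\qftp(\bar x)=\qftp(\bar y)$, i.e. to ``$\bar x\mapsto\bar y$ extends to a bilinear isomorphism $\linspan{\bar x}\to\linspan{\bar y}$''. The linear-dependency pattern is pinned down by the positive formulas $\delta_{\bar\lambda}(\bar x,\bar y):=\bigl(\sum_i\lambda_i x_i=0\wedge\sum_i\lambda_i y_i=0\bigr)\vee\bigl(\sum_i\lambda_i x_i\neq 0\wedge\sum_i\lambda_i y_i\neq 0\bigr)$ for all scalar tuples $\bar\lambda$ — positive because $\neq$ is an atomic symbol of $\L_K$ — and a pair satisfies all of them exactly when $\bar x$ and $\bar y$ have the same linear dependencies. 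For the bilinear data one must, having fixed the dependency pattern, express that $[x_i,x_j]$ and $[y_i,y_j]$ agree for all $i,j$; here one cannot use ``$[x_i,x_j]\neq\lambda$'' (not positive for infinite $K$), so I would instead put into $\Omega$ positive formulas asserting the existence of witness vectors $z$ subject to positive atomic constraints such as $[z-x_i,x_j]=0$ and $[z-y_i,y_j]=0$, together with the formulas $\theta_n$ of \thref{prop:formula-for-linear-independence}, and then check (a) that such witnesses exist whenever $\tp(\bar a)=\tp(\bar b)$ — using non-degeneracy and finite injectivity of e.c.\ models (\thref{thm:ec-models}) to realise prescribed bilinear products, with a separate treatment of the degenerate configurations in which some $x_j,y_j$ are linearly dependent — and (b) that these formulas collectively force all bilinear values of $\bar x$ and $\bar y$ to coincide. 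Producing the correct formulas in (a)/(b), and in particular handling the degenerate configurations, is the main technical obstacle of the proposition; everything else is comparatively routine.

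For infinite $K$ and failure of Hausdorffness: by \thref{fact:hausdorff-iff-aph} it is enough to exhibit a model of $(T_K^*)^{\ec}$ that is not an amalgamation base. Since $K$ is infinite, for every finite $F\subseteq K$ there is a $K$-bilinear space with two linearly independent vectors whose bilinear product avoids $F$, so by compactness there is $M\models(T_K^*)^{\ec}$ with linearly independent $a,b$ to which $M$ assigns no value $[a,b]=\lambda$ (and, for $T_K^s$, $T_K^a$, also no value $[b,a]=\lambda$). From $M\models T_K^*$ one checks that the partial bilinear form of $M$ must be undefined on the entire coset determined by $a$ and $b$, so it can be completed to genuine $K$-bilinear spaces with $[a,b]$ set to any prescribed value; thus there are homomorphisms $M\to N_1$ and $M\to N_2$ with $N_1,N_2\models T_K^*$ and $[a,b]=\lambda_1$ in $N_1$, $[a,b]=\lambda_2$ in $N_2$ with $\lambda_1\neq\lambda_2$. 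As ``$[a,b]=\lambda_i$'' is positive, any amalgam of $N_1\leftarrow M\to N_2$ over $M$ would satisfy $[a,b]=\lambda_1\wedge[a,b]=\lambda_2$, contradicting $T_K^*$; hence $M$ is not an amalgamation base and, by \thref{def:hausdorff} via \thref{fact:hausdorff-iff-aph}, $T_K^*$ is not Hausdorff.
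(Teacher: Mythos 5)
Your finite-$K$ argument and the overall shape of the non-Hausdorff argument are fine, but the semi-Hausdorff half has a genuine gap: the whole content of that half is to actually exhibit a positive formula defining $[x,y]=[x',y']$, and you defer exactly this ("producing the correct formulas \dots is the main technical obstacle"). Moreover the constraints you hint at would fail: a single witness $z$ with $[z-x_i,x_j]=0\wedge[z-y_i,y_j]=0$ only asserts $[z,x_j]=[x_i,x_j]$ and $[z,y_j]=[y_i,y_j]$, with nothing linking the two values; by finite injectivity such a $z$ exists for essentially arbitrary pairs of tuples, so adding these formulas (even together with $\theta_n$) does not force $[x_i,x_j]=[y_i,y_j]$. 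What is needed is a \emph{chain} through two fresh witnesses: the paper uses $\exists zz'([x, y-z] = 0 \wedge [x-z', z] = 0 \wedge [z', z-y'] = 0 \wedge [z'-x', y'] = 0)$, which telescopes to $[x,y]=[x,z]=[z',z]=[z',y']=[x',y']$, and conversely, when $[a,b]=[a',b']=\lambda$, is realised in any e.c.\ model by finite injectivity after adjoining two new linearly independent vectors $c,c'$ with $[a,c]=[c',c]=[c',b']=\lambda$. Because the witnesses are fresh, no case distinction on degenerate configurations of $\bar x,\bar y$ is needed, so the difficulty you anticipate there does not arise once the right formula is written down; but without that formula your proof of semi-Hausdorffness is not complete.

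In the non-Hausdorff part, the sentence ``one checks that the partial bilinear form of $M$ \dots can be completed to genuine $K$-bilinear spaces with $[a,b]$ set to any prescribed value'' is itself the nontrivial step: a priori the values that \emph{are} defined in $M$ could force the value of $[a,b]$ (through linear combinations), even though $[a,b]$ is literally undefined, and ruling this out requires an argument. The paper does it by positive compactness plus \thref{lem:negation-of-bilinear-value}: if, say, no extension with $[a,b]=0$ existed, a finite conjunction $\phi(a,b,\bar c)$ of atomic facts true in the model would be inconsistent with $[x,y]=0$ modulo $T_K^*$, and (after eliminating linear equations, using that $a,b$ are linearly independent) the lemma produces a $\lambda$ with $T_K^*\models\forall xy(\exists\bar z\,\phi\to[x,y]=\lambda)$, forcing $[a,b]=\lambda$ in the model, a contradiction. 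You should route your ``one checks'' through this lemma (or an equivalent linear-algebra argument about the span of the defined values). Two further minor points: in your compactness step the finite approximations must be models of $(T_K^*)^\ec$, so use e.c.\ models (or, as the paper does, a full first-order elementary extension of an e.c.\ model) rather than arbitrary bilinear spaces avoiding $F$; and your Boolean argument for finite $K$, via the finitely many types each isolated by a quantifier-free formula, is correct and essentially equivalent to the paper's, which instead complements atomic formulas and invokes positive quantifier elimination (\thref{thm:quantifier-elimination}).
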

\begin{proof}
Suppose that $K$ is finite. Then $[x, y] \neq \lambda$ is equivalent to $\bigvee_{\lambda' \neq \lambda} [x, y] = \lambda'$. As we also have a symbol for inequality, we have that for every atomic formula $\chi(\bar{x})$ there is a positive (quantifier-free) formula that is equivalent to $\neg \chi(\bar{x})$. The same is then true for quantifier-free formulas. We conclude that $T_K^*$ is Boolean by positive quantifier elimination (\thref{thm:quantifier-elimination}).

We now move on to the case where $K$ is infinite. We first prove that $T_K^*$ is semi-Hausdorff. By \thref{thm:quantifier-free-type-determines-type} we only need to prove that having the same quantifier-free type is type-definable. This is clearly true when we restrict to just the language of $K$-vector spaces. So it suffices to show that $[x,y] = [x', y']$ is definable. Consider the formula $\phi(x,y,x',y')$ given by:
\[
\exists zz'([x, y-z] = 0 \wedge [x-z', z] = 0 \wedge [z', z-y'] = 0 \wedge [z'-x', y'] = 0).
\]
Let $M$ be an e.c.\ model and let $a,b,a',b' \in M$ with $M \models \phi(a,b,a',b')$. Let $c,c' \in M$ be such that $[a,b-c] = [a-c',c] = [c', c-b'] = [c'-a',b'] = 0$. We thus get $[a,b] - [a,c] = [a, c] - [c', c] = [c',c] - [c',b'] = [c', b'] - [a',b'] = 0$, and so $[a,b] = [a,c] = [c', c] = [c',b'] = [a',b']$. So $\phi(x,y,x',y')$ does indeed imply $[x,y] = [x', y']$.

Conversely, let $a,b,a',b' \in M$ such that $[a, b] = [a', b'] = \lambda$. Define $A = \linspan{aba'b'}$ and introduce two new linearly independent vectors $c, c'$ and form $B = \linspan{aba'b'cc'}$. Make $B$ into a $K$-bilinear space by setting $[a, c] = [c', c] = [c', b'] = \lambda$. Pick anything for the remainder of the bilinear products (respecting the form having to be symmetric/alternating). As $M$ is e.c.\  it is finitely injective (\thref{thm:ec-models}), so the inclusion $A \subseteq M$ extends to a bilinear monomorphism $g: B \to M$. Then $g(c)$ and $g(c')$ are realisations for $z$ and $z'$ respectively in $\phi$, so $M  \models \phi(a, b, a', b')$.

Now we prove that $T_K^*$ is not Hausdorff. Let $M$ be an e.c.\ model of $T_K^*$. Using the assumption that $K$ is infinite, we can use compactness for full first-order logic to find an elementary extension $N$ of $M$ with linearly independent $a, b \in N$ such that $N \not \models [a, b] = \lambda$ for all $\lambda \in K$. We claim that there are extensions $N_1$ and $N_2$ of $N$, both models of $T_K^*$, such that $N_1 \models [a, b] = 0$ and $N_2 \models [a, b] = 1$. This shows that $T_K^*$ is not Hausdorff by \thref{fact:hausdorff-iff-aph}, as $N_1 \supseteq N \subseteq N_2$ cannot be amalgamated.

Suppose for a contradiction that one of these extensions, say $N_1$, does not exist. Then by compactness there is a finite conjunction $\phi(a, b, \bar{c})$ of atomic formulas that are true in $N$ such that $T_K^* \models \neg \exists xy(\exists \bar{z}\phi(x, y, \bar{z}) \wedge [x, y] = 0)$. We may assume $\phi$ contains no linear equations since $a$ and $b$ are linearly independent and we can replace any $c \in \bar{c}$ that is linearly dependent on $a$, $b$ and the remainder of $\bar{c}$ by replacing it by the appropriate linear combination. We can thus apply \thref{lem:negation-of-bilinear-value} to obtain some $\lambda \in K$ such that $T_K^* \models \forall xy(\exists \bar{z}\phi(x, y, \bar{z}) \to [x, y] = \lambda)$. However, this would imply that $N \models [a, b] = \lambda$, a contradiction.
\end{proof}
We close out this section by recalling the definitions of simplicity (in the sense of \cite{ben-yaacov_simplicity_2003}) and stability for positive theories, and prove that $T_K^*$ is simple unstable.\footnote{Having JEP allows us to work in a monster model, but we have so far not used monster models and it seems unnecessary to introduce them just for the final result.}
\begin{definition}
\thlabel{def:dividing}
Let $M$ be some e.c.\ model and let $\bar{a}, \bar{b} \in M$, $C \subseteq M$. We say that a type $p(x, \bar{b}) = \tp(\bar{a}/C\bar{b})$ \emph{divides over $C$} if there is an extension $M \subseteq N$ with a $C$-indiscernible sequence $(\bar{b}_i)_{i < \omega}$ in $N$ such that $\bigcup_{i < \omega} p(x, \bar{b}_i)$ is inconsistent.
\end{definition}
\begin{definition}
\thlabel{def:simplicity}
A theory $T$ is called \emph{simple} if dividing has \emph{local character}. That is, there is some cardinal $\mu$ such that for any finite $\bar{a}$ in any e.c.\ model $M$ and any $B \subseteq M$ there is $B_0 \subseteq B$ with $|B_0| \leq \mu$ and $\tp(\bar{a}/B)$ does not divide over $B_0$.
\end{definition}
\begin{definition}
\thlabel{def:stability}
A theory $T$ is called \emph{stable} if there is some cardinal $\mu$ such that for all $A \subseteq M$, where $M$ is an e.c.\ model and $|A| \leq \mu$, there are at most $\mu$ different types over $A$ (possibly realised in extensions of $M$).
\end{definition}
The following fact is one half of a Kim-Pillay style theorem for positive logic, which allows us to characterise simple positive theories based on the existence of a simple independence relation. We simplified the statement for our setting and only mentioned one half as that is what we need, the original theorem is much stronger.
\begin{fact}[{\cite[Theorem 1.51]{ben-yaacov_simplicity_2003}}]
\thlabel{fact:kim-pillay-positive-logic}
Let $T$ be a positive theory. If there is a simple independence relation $\ind$ on subsets of the e.c.\ models of $T$ then $T$ is simple and $\ind$ coincides with non-dividing.
\end{fact}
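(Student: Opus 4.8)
The plan is to establish the two containments $\ind \Rightarrow$ non-dividing and non-dividing $\Rightarrow \ind$ separately; once they are shown to coincide, the \textsc{Local Character} axiom for $\ind$ (\thref{fact:linear-independence-facts}) transfers verbatim to dividing and yields that $T$ is simple in the sense of \thref{def:simplicity}. Throughout I would fix a sufficiently saturated e.c.\ model to serve as a monster model (justified by JEP, \thref{cor:jep}), so that Galois types are realised and $C$-indiscernible sequences can be extracted from any long enough sequence by a standard Ramsey argument together with compactness for h-inductive theories. All types are the positive/Galois types of \thref{def:type}, and I would repeatedly use that immersions (\thref{def:homomorphism-immersion}) preserve them.

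For the main direction $\ind \Rightarrow$ non-dividing, suppose $\bar a \ind_C \bar b$ and set $p(x,\bar b)=\tp(\bar a/C\bar b)$. The core step is to realise $\bigcup_{i<\omega} p(x,\bar b_i)$ along a \emph{Morley sequence} $(\bar b_i)_{i<\omega}$ over $C$ in $\tp(\bar b/C)$, i.e.\ a $C$-indiscernible sequence with $\bar b_i \ind_C \bar b_{<i}$, built using \textsc{Extension} and \textsc{Invariance}. I would construct by induction an $\bar a_n$ realising $p(x,\bar b_0)\cup\dots\cup p(x,\bar b_n)$ with $\bar a_n \ind_C \bar b_{\le n}$: from $\bar a \ind_C \bar b$ and indiscernibility one obtains a copy $\bar a'$ with $\bar a' \equiv_C \bar a_n$ realising $p(x,\bar b_{n+1})$ and $\bar a' \ind_C \bar b_{n+1}$, while the Morley condition gives $\bar b_{\le n} \ind_C \bar b_{n+1}$; then \textsc{3-amalgamation} (\thref{def:3-amalgamation}) amalgamates $\bar a_n$ and $\bar a'$ into a single $\bar a_{n+1}$ realising $p$ on all of $\bar b_{\le n+1}$ and independent from it. \textsc{Finite Character} and compactness then produce a realisation over the whole sequence. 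To upgrade ``does not divide along Morley sequences'' to ``does not divide along \emph{every} $C$-indiscernible sequence'', as \thref{def:dividing} requires, I would prove the analogue of Kim's lemma in this setting: if $p$ divides with respect to some $C$-indiscernible sequence then it already divides with respect to a Morley sequence.

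For the converse non-dividing $\Rightarrow \ind$ I would argue the contrapositive: if $\bar a \nind_C \bar b$ then $\tp(\bar a/C\bar b)$ divides over $C$. Using \textsc{Symmetry} and \textsc{Local Character} I would produce a Morley sequence $(\bar b_i)$ over $C$ in $\tp(\bar b/C)$ along which the dependence of $\bar a$ on $\bar b$ propagates, so that $\bigcup_i \tp(\bar a/C\bar b_i)$ becomes inconsistent; the point is that any realisation of the whole union would be $\ind$-independent from each $\bar b_i$ and hence, by \textsc{3-amalgamation} and \textsc{Transitivity}, from their span, contradicting $\bar a \nind_C \bar b$. Combined with the previous paragraph this identifies $\ind$ with non-dividing.

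The main obstacle is the Kim's-lemma step: separating dividing along arbitrary indiscernible sequences from dividing along Morley sequences is what genuinely requires the full strength of \textsc{3-amalgamation}, and it is exactly the step that would fail in the absence of a well-behaved independence relation. A secondary source of care is purely positive-logic bookkeeping: one must check that the indiscernible sequences and the amalgams produced by \textsc{3-amalgamation} live in genuine e.c.\ models, completing via \thref{fact:complete-to-ec-model} where necessary, and that the type-equalities used are preserved because the relevant maps are immersions. Once $\ind$ is identified with non-dividing, \textsc{Local Character} for $\ind$ gives local character for dividing, so $T$ is simple, completing the argument.
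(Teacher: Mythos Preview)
The paper does not prove this statement: it is recorded as a \emph{Fact} with a citation to \cite[Theorem~1.51]{ben-yaacov_simplicity_2003} and invoked as a black box in the proof of \thref{thm:positive-theory-simple-unstable}. There is therefore no ``paper's own proof'' to compare against; what follows is an assessment of your sketch on its own merits.

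Your plan for the direction $\ind\Rightarrow$ non-dividing is the standard one, and you correctly flag that the Kim's-lemma step (upgrading from Morley sequences to arbitrary indiscernible sequences) is where the real content lies; as written it is only named, not argued. A minor issue: you justify the monster model via \thref{cor:jep}, but that corollary is specific to $T_K^*$, whereas the present Fact is stated for an arbitrary positive theory $T$; JEP is not among the hypotheses (one either adds it or reduces to a completion). Similarly, you cite \thref{fact:linear-independence-facts} for \textsc{Local Character}, but for a general $\ind$ this is part of \thref{def:simple-independence-relation}.

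The backward direction, however, contains a genuine error. You argue the contrapositive: assume $\bar a \nind_C \bar b$, take a Morley sequence $(\bar b_i)$, and claim that any $\bar a^*$ realising $\bigcup_i \tp(\bar a/C\bar b_i)$ would satisfy $\bar a^* \ind_C \bar b_i$ for each $i$. But \textsc{Invariance} gives precisely the opposite: since $\bar a \nind_C \bar b$ and $(\bar a^*,\bar b_i)$ has the same type over $C$ as $(\bar a,\bar b)$, we get $\bar a^* \nind_C \bar b_i$ for every $i$, so your intended contradiction never materialises. The correct argument runs the other way: assume $\tp(\bar a/C\bar b)$ does \emph{not} divide, take a sufficiently long Morley sequence $(\bar b_i)_{i<\kappa}$, realise $\bigcup_i p(x,\bar b_i)$ by some $\bar a^*$, and apply \textsc{Local Character} for $\ind$ to $\bar a^*$ over $C\cup\{\bar b_i:i<\kappa\}$; together with the Morley property, \textsc{Base-Monotonicity} and \textsc{Transitivity} this yields some $i$ with $\bar a^* \ind_C \bar b_i$, and \textsc{Invariance} transports this to $\bar a \ind_C \bar b$.
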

\begin{repeated-theorem}[\thref{thm:positive-theory-simple-unstable}]
The theory $T_K^*$ is simple unstable, and non-dividing coincides with linear independence. That is, for every $\bar{a}$, $\bar{b}$ and $C$ in some e.c.\ model $M$ we have $\bar{a} \ind_C^M \bar{b}$ iff $\tp(\bar{a}/C\bar{b})$ does not divide over $C$. 
\end{repeated-theorem}
\begin{proof}
We get simplicity and the claim about $\ind$ coinciding with non-dividing directly from applying \thref{fact:kim-pillay-positive-logic} to $\ind$, as all the necessary properties have been verified in section \ref{sec:independence-relation}. To translate between the framework of AECats and the framework of positive logic, we note that Galois types coincide with positive types. That is, if $B$ is a common subset of e.c.\ models $M_1$ and $M_2$ and $\bar{a}_1 \in M_1$ and $\bar{a}_2 \in M_2$ then we have $\tp(\bar{a}_1/B) = \tp(\bar{a}_2/B)$ if and only if $\gtp(\bar{a}_1/B; M_1) = \gtp(\bar{a}_2/B; M_2)$ in $\Bil_K^*$. The left to right direction is a straightforward exercise using compactness and the method of diagrams. For the right to left direction we let $M_1 \xrightarrow{f_1} N \xleftarrow{f_2} M_2$ witness the equality of Galois types. Then indeed $\tp(\bar{a}_1/B) = \tp(f_1(\bar{a}_1)/B) = \tp(f_2(\bar{a}_2)/B) = \tp(\bar{a}_2/B)$, because the bilinear monomorphisms $f_1$ and $f_2$ are $\L_K$-homomorphisms and thus immersions since $M_1$ and $M_2$ are e.c.\ models.

We could prove non-stability using the fact that \textsc{Stationarity} fails, as in \thref{prop:failure-of-stationarity}, because in stable positive theories we must have \textsc{Stationarity} over certain sets \cite[Theorem 2.8]{ben-yaacov_simplicity_2003}. However, we will give a direct proof for \thref{def:stability}. Let $\mu$ be any infinite cardinal, and let $M$ be an e.c.\ model with at least $\mu$ many linearly independent vectors $(a_i)_{i < \mu}$. Then for any $\chi: \mu \to \{0,1\}$ we define the partial type $\Sigma_\chi(x) = \{[x, a_i] = \chi(i) : i < \mu\}$. Then each $\Sigma_\chi(x)$ can be extended to a type $p_\chi(x)$ over $(a_i)_{i < \mu}$ (i.e.\ it will have a realisation in some extension of $M$). This yields an injection from $2^\mu$ into the space of types over $(a_i)_{i < \mu}$, so there are more than $\mu$ many types over $(a_i)_{i < \mu}$.
\end{proof}
We named real Hilbert spaces as an example of bilinear spaces (\thref{ex:bilinear-spaces}(ii)). Complex Hilbert spaces are not bilinear spaces, because they are required to be conjugate symmetric, that is $[x, y] = \overline{[y, x]}$. Together with linearity in the first argument this means that the form is conjugate linear in the second argument: $[x, \alpha y + \beta z] = \bar{a}[x, y] + \bar{\beta}[x, z]$. This is an example of a \emph{Hermitian space}, which is a generalisation of bilinear spaces. Even more general are \emph{sesquilinear spaces}. It seems likely that similar techniques can be used to study such spaces. We thank Jan Dobrowolski for asking the following question.
\begin{question}
\thlabel{q:sesquilinear-forms}
Can we use positive logic to study sesquilinear or Hermitian spaces over a fixed base field (or even: division ring)? Is the arising theory still simple?
\end{question}
\section{\texorpdfstring{$\omega$}{omega}-Categoricity in positive logic}
\label{sec:omega-categoricity}
In this section we provide a positive version of what is commonly known as the Ryll-Nardzewski theorem in full first-order logic, extending \cite[Theorem 6.5]{haykazyan_spaces_2019}. This will then be used to conclude that, for countable $K$, the theory $T_K^*$ of (symmetric/alternating) $K$-bilinear spaces is $\omega$-categorical, see \thref{cor:bilinear-spaces-omega-categorical}.
\begin{definition}
\thlabel{def:categoricity}
Let $\kappa$ be a cardinal. A theory $T$ is called \emph{$\kappa$-categorical} if it has only one e.c.\ model of cardinality $\kappa$, up to isomorphism.
\end{definition}
The following definition is taken from \cite{haykazyan_spaces_2019}. More precisely, \cite[page 844]{haykazyan_spaces_2019} gives a topological definition of what it means for a set of formulas to be supported. We translate that to a logical property, where we restrict our attention to types (remember, for us these are maximal types, in contrast to \cite{haykazyan_spaces_2019}). This results in \thref{def:supported-type}(ii). We also provide an equivalent property, which is similar to the usual notion of isolated type. In fact, in full first-order logic supported is the same as isolated. In positive logic the latter would not be a good term, as it no longer corresponds to a type being an isolated point in the type space.
\begin{definition}
\thlabel{def:supported-type}
A type $p(\bar{x})$ in finitely many variables is called \emph{supported} if there is a formula $\phi(\bar{x}) \in p(\bar{x})$ such that the following equivalent conditions hold:
\begin{enumerate}[label=(\roman*)]
\item for all $\chi(\bar{x}) \in p(\bar{x})$ we have $T^\ec \models \forall \bar{x}(\phi(\bar{x}) \to \chi(\bar{x}))$,
\item for all $\psi(\bar{x}) \not \in p(\bar{x})$ we have $T \models \neg \exists \bar{x}(\phi(\bar{x}) \wedge \psi(\bar{x}))$.
\end{enumerate}
In this case we call $\phi(\bar{x})$ the \emph{support} of $p(\bar{x})$.
\end{definition}
\begin{lemma}
\thlabel{lem:supported-type}
The conditions in \thref{def:supported-type} are indeed equivalent.
\end{lemma}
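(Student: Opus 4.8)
The plan is to prove both implications by contraposition, using only the defining property of e.c.\ models (\thref{def:ec-model}(ii)), the fact that homomorphisms preserve our (positive existential) formulas, the completion \thref{fact:complete-to-ec-model}, and the observation that an h-inductive sentence is a consequence of $T^\ec$ exactly when it holds in every e.c.\ model of $T$. Fix throughout an e.c.\ model $M$ and a tuple $\bar a \in M$ realising $p(\bar x)$, so that $\eta \in p$ iff $M \models \eta(\bar a)$ for any formula $\eta$.

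For (i) $\Rightarrow$ (ii): assume (i) and take $\psi(\bar x) \notin p(\bar x)$. Since $M$ is e.c.\ and $M \not\models \psi(\bar a)$, \thref{def:ec-model}(ii) gives $\chi(\bar x)$ with $M \models \chi(\bar a)$ (hence $\chi \in p$) and $T \models \neg\exists \bar x(\psi(\bar x) \wedge \chi(\bar x))$. By (i), $T^\ec \models \forall\bar x(\phi(\bar x) \to \chi(\bar x))$. If now some $N \models T$ had $\bar c \in N$ with $N \models \phi(\bar c) \wedge \psi(\bar c)$, then completing $N$ to an e.c.\ model via a homomorphism $f$ (\thref{fact:complete-to-ec-model}) preserves $\phi$ and $\psi$, while the target, being e.c., satisfies $\forall\bar x(\phi\to\chi)$; this forces $\chi(f(\bar c))$ to hold and contradicts $T \models \neg\exists\bar x(\psi \wedge \chi)$. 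Hence $T \models \neg\exists\bar x(\phi \wedge \psi)$, which is (ii).

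For (ii) $\Rightarrow$ (i): assume (ii) and take $\chi(\bar x) \in p(\bar x)$. If $T^\ec \not\models \forall\bar x(\phi \to \chi)$, then (this sentence being h-inductive) it fails in some e.c.\ model $N$, say $N \models \phi(\bar c)$ but $N \not\models \chi(\bar c)$ for some $\bar c \in N$. Applying \thref{def:ec-model}(ii) to $\chi$ and $\bar c$ produces $\psi(\bar x)$ with $N \models \psi(\bar c)$ and $T \models \neg\exists\bar x(\chi(\bar x) \wedge \psi(\bar x))$; since $M \models \chi(\bar a)$ and $M \models T$, we cannot have $\psi \in p$ (else $M$ would witness $\exists\bar x(\chi\wedge\psi)$), so $\psi \notin p$. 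Then (ii) yields $T \models \neg\exists\bar x(\phi \wedge \psi)$, contradicting $N \models \phi(\bar c) \wedge \psi(\bar c)$. Thus $T^\ec \models \forall\bar x(\phi \to \chi)$, which is (i).

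The only point requiring care is the asymmetry between $T$ (appearing in (ii)) and $T^\ec$ (appearing in (i)): passing from a statement about all e.c.\ models to one about all models of $T$ is exactly what the completion step together with preservation of positive formulas accomplishes, while conversely every e.c.\ model is already a model of $T$, so that direction is immediate. I do not expect any genuine obstacle beyond keeping this bookkeeping straight.
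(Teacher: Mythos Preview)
Your proof is correct and follows essentially the same approach as the paper's: both directions proceed by contraposition, using the characterisation of e.c.\ models from \thref{def:ec-model}(ii) together with completion to e.c.\ models. The only minor tactical difference is in (i) $\Rightarrow$ (ii), where the paper argues slightly more directly via maximality of types (if $\phi \in q$ for some type $q$, then (i) forces $p \subseteq q$, hence $p = q$, so $\psi \in p$), rather than first extracting a specific witness $\chi$ as you do; your (ii) $\Rightarrow$ (i) is essentially identical to the paper's.
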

\begin{proof}
\underline{(i) $\Rightarrow$ (ii)} Let $\psi(\bar{x}) \not \in p(\bar{x})$ and assume for a contradiction that there is some model $M$ of $T$ with $\bar{a} \in M$ such that $M \models \phi(\bar{a}) \wedge \psi(\bar{a})$. We may assume $M$ to be e.c. Write $q(\bar{x}) = \tp(\bar{a})$, so we have $\phi(\bar{x}) \in q(\bar{x})$ and hence $p(\bar{x}) \subseteq q(\bar{x})$ by our assumption on $\phi$. By maximality of types we then have $p(\bar{x}) = q(\bar{x})$, and hence $\psi(\bar{x}) \in q(\bar{x}) = p(\bar{x})$, which is a contradiction.

\underline{(ii) $\Rightarrow$ (i)} Let $\chi(\bar{x}) \in p(\bar{x})$. If $T^\ec \not \models \forall \bar{x}(\phi(\bar{x}) \to \chi(\bar{x}))$ then by definition of $T^\ec$ there must be an e.c.\ model $M$ with $a \in M$ such that $M \models \phi(\bar{a})$ and $M \not \models \chi(\bar{a})$. So there is a negation $\psi(\bar{x})$ of $\chi(\bar{x})$ such that $M \models \psi(\bar{a})$. As $\chi(\bar{x}) \in p(\bar{x})$ we must have $\psi(\bar{x}) \not \in p(\bar{x})$, so by our assumption on $\phi(\bar{x})$ we have $T \models \neg \exists \bar{x}(\phi(\bar{x}) \wedge \psi(\bar{x}))$. However, this contradicts $M \models \phi(\bar{a}) \wedge \psi(\bar{a})$.
\end{proof}
\begin{definition}
\thlabel{def:atomic-prime-model}
Let $M$ be an e.c.\ model. We call $M$ \emph{atomic} if it only realises supported types. We call $M$ \emph{prime} if every e.c.\ model $N$ is an extension of $M$.
\end{definition}
\begin{fact}[{\cite[Proposition 6.3]{haykazyan_spaces_2019}}]
\thlabel{fact:prime-iff-countable-and-atomic}
Let $M$ be an e.c.\ model of a countable theory $T$ with JEP. Then $M$ is prime if and only if it is countable and atomic.
\end{fact}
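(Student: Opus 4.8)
I would prove the two implications separately, after first recording what ``prime'' amounts to for e.c.\ models. If $M$ is an e.c.\ model and $N \models T$, then every homomorphism $f \colon M \to N$ is an immersion (\thref{def:ec-model}); reflecting the atomic formula $x = y$ then shows $f$ is injective, and by \thref{def:homomorphism-immersion} it satisfies $\tp^N(f(\bar a)) = \tp^M(\bar a)$ for every tuple. Moreover, since $T$ has JEP, any two e.c.\ models of $T$ satisfy the same positive sentences (the maps of each into a joint model of $T$ are immersions, so positive sentences transfer both ways). Hence for e.c.\ models ``$N$ is a continuation of $M$'' is the same as ``$M$ immerses into $N$'', and ``$M$ is prime'' says exactly that $M$ immerses into every e.c.\ model of $T$.

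\emph{Countable and atomic $\Rightarrow$ prime.} I would run a one-sided back-and-forth (``forth'') argument. Fix an arbitrary e.c.\ model $N$ and an enumeration $M = \{m_0, m_1, \dots\}$, using countability of $M$, and build an increasing chain of finite partial maps $f_k$ on $\bar a_k := (m_0, \dots, m_{k-1})$ with $\tp^N(f_k(\bar a_k)) = \tp^M(\bar a_k)$; the base case $k = 0$ is the ``same positive sentences'' remark. At step $k$, put $q = \tp^M(\bar a_k, m_k)$; since $M$ is atomic (\thref{def:atomic-prime-model}), $q$ has a support $\phi(\bar x, y) \in q$ (\thref{def:supported-type}). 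Then $\exists y\, \phi(\bar x, y) \in \tp^M(\bar a_k) = \tp^N(f_k(\bar a_k))$, so there is $b \in N$ with $N \models \phi(f_k(\bar a_k), b)$; as $N \models T^\ec$, condition (i) of \thref{def:supported-type} forces $N \models \chi(f_k(\bar a_k), b)$ for every $\chi \in q$, and maximality of types gives $\tp^N(f_k(\bar a_k), b) = q$. Setting $f_{k+1}(m_k) = b$ preserves the inductive hypothesis, and $f = \bigcup_k f_k$ is then an immersion $M \to N$, so $N$ continues $M$; as $N$ was arbitrary, $M$ is prime.

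\emph{Prime $\Rightarrow$ countable and atomic.} Countability is immediate: $T$ is countable, so it has a countable model, which I complete to a countable e.c.\ model $N_0$ (\thref{fact:complete-to-ec-model}, the completion being arranged to preserve countability); primeness gives an immersion $M \to N_0$, so $|M| \le \aleph_0$. For atomicity I argue contrapositively: suppose some finite $\bar a \in M$ has $p := \tp^M(\bar a)$ non-supported. I would then build a countable e.c.\ model $N$ of $T$ that \emph{omits} $p$ (no tuple of $N$ realises $p$); since an immersion $g \colon M \to N$ would make $g(\bar a)$ realise $\tp^N(g(\bar a)) = p$, this shows $M$ is not prime. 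To build $N$ I would use a Henkin construction with new constants, interleaving: (a) witnesses for existential sentences; (b) steps forcing the limit model to be e.c.; and (c) for each finite tuple $\bar c$ of constants, a step killing $p$ at $\bar c$. In step (c), if $\phi(\bar c)$ is the positive formula recording the finite commitment about $\bar c$ so far and $\phi \notin p$ there is nothing to do; if $\phi \in p$, then $\phi$ does not support $p$, so failure of condition (i) in \thref{def:supported-type} yields some $\chi \in p$, an e.c.\ model $M'$ and $\bar d \in M'$ with $M' \models \phi(\bar d)$ and $M' \not\models \chi(\bar d)$, and, $M'$ being e.c., a formula $\psi$ with $T \models \neg \exists \bar x(\chi(\bar x) \wedge \psi(\bar x))$ and $M' \models \psi(\bar d)$ (\thref{def:ec-model}); then $\psi \notin p$ while $\phi \wedge \psi$ is consistent, so $\psi(\bar c)$ may legitimately be added, ensuring $\bar c$ omits $p$ in the end.

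\textbf{Main obstacle.} The substance is in steps (b) and (c) above: proving a positive-logic omitting types theorem, namely that a non-supported complete type over $\emptyset$ in a countable theory is omitted in some countable e.c.\ model. This fuses the ``e.c.\ Henkin construction'' of positive model theory with the diagonalisation against $p$, and it is here that countability of $T$ is genuinely used (to schedule all requirements into an $\omega$-length construction); the forth direction uses only countability of $M$. The other ingredients — the equivalence in \thref{lem:supported-type} and the elementary facts about immersions between e.c.\ models recorded above — are routine.
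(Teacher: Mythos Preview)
The paper does not prove this statement: it is stated as a Fact and attributed to \cite[Proposition~6.3]{haykazyan_spaces_2019}, so there is no in-paper argument to compare against. Your sketch is correct and follows the expected line---a forth construction along supports for one direction, and a positive-logic omitting-types construction for the other---which is essentially what the cited reference does. The only detail worth tightening is in step~(c): the ``commitment about $\bar c$'' must be read as the existential projection $\exists \bar e\,\Phi(\bar c,\bar e)$ of the entire finite Henkin diagram built so far, so that the e.c.\ witness $(M',\bar d)$ you produce also furnishes interpretations for the remaining constants and hence certifies that adding $\psi(\bar c)$ is consistent with the whole construction, not merely with $\phi(\bar c)$.
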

\begin{lemma}
\thlabel{lem:supported-n-types-implies-omega-saturated}
If every $n$-type is supported then every e.c.\ model is $\omega$-saturated.
\end{lemma}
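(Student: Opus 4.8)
The plan is to verify $\omega$-saturation (\thref{def:saturation} with $\kappa = \omega$) directly. The familiar full first-order route---``every $n$-type isolated'' $\Rightarrow$ ``finitely many $n$-types'' (compactness of the type space) $\Rightarrow$ $\omega$-categoricity, together with ``countable atomic $\Rightarrow$ $\omega$-saturated''---is unavailable here, precisely because a supported positive type need not be an \emph{isolated point} of the type space, so one cannot conclude that the type space is finite. Instead one realises the relevant types one at a time, absorbing the parameters into the tuple so that the hypothesis on $n$-types (without parameters) applies.

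\textbf{Construction.} Let $M$ be an e.c.\ model, $\bar{b} \in M$ a finite tuple, and $\Sigma(\bar{x})$ a partial type over $\bar{b}$ in finitely many variables that is finitely satisfiable in $M$; write $\Sigma(\bar{x}, \bar{y})$ for $\Sigma$ with the parameters $\bar{b}$ replaced by fresh variables $\bar{y}$. First I would check that $\Sigma(\bar{x}, \bar{y}) \cup \tp(\bar{b})$ is consistent: any finite part of it is satisfied in $M$ by $\bar{a}_0\bar{b}$ for a suitable $\bar{a}_0 \in M$ witnessing the corresponding finite fragment of $\Sigma$ over $\bar{b}$, so by compactness the whole set has a model of $T$, which I then complete to an e.c.\ model $N$ via \thref{fact:complete-to-ec-model} (the homomorphism preserves the positive formulas making up $\Sigma$ and $\tp(\bar{b})$, so the realisation survives). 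Let $\bar{a}\bar{b}' \in N$ realise $\Sigma(\bar{x}, \bar{y}) \cup \tp(\bar{b})$. Since $\bar{b}'$ satisfies every formula of $\tp_M(\bar{b})$ we have $\tp_M(\bar{b}) \subseteq \tp_N(\bar{b}')$, and maximality of positive types forces $\tp_M(\bar{b}) = \tp_N(\bar{b}')$.

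\textbf{Pulling the realisation back.} Now set $q(\bar{x}, \bar{y}) = \tp_N(\bar{a}\bar{b}')$, so $\Sigma(\bar{x}, \bar{y}) \subseteq q$. By hypothesis $q$ is supported (it is an $n$-type with $n = |\bar{x}| + |\bar{y}|$); let $\phi(\bar{x}, \bar{y})$ be a support. Then $N \models \exists \bar{x}\, \phi(\bar{x}, \bar{b}')$, so $\exists \bar{x}\, \phi(\bar{x}, \bar{y}) \in \tp_N(\bar{b}') = \tp_M(\bar{b})$, hence $M \models \exists \bar{x}\, \phi(\bar{x}, \bar{b})$; choose $\bar{a}^\ast \in M$ with $M \models \phi(\bar{a}^\ast, \bar{b})$. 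By clause (i) of \thref{def:supported-type}, for every $\chi(\bar{x}, \bar{y}) \in q$ we have $T^\ec \models \forall \bar{x}\bar{y}(\phi(\bar{x}, \bar{y}) \to \chi(\bar{x}, \bar{y}))$; since $M$ is e.c.\ it models $T^\ec$, and $M \models \phi(\bar{a}^\ast, \bar{b})$, so $M \models \chi(\bar{a}^\ast, \bar{b})$ for all $\chi \in q$, in particular for every formula of $\Sigma(\bar{x}, \bar{y})$. Thus $\bar{a}^\ast$ realises $\Sigma(\bar{x})$ over $\bar{b}$ inside $M$, as required.

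\textbf{Main obstacle.} There is no single hard computation, but care is needed in two spots: checking consistency of $\Sigma(\bar{x}, \bar{y}) \cup \tp(\bar{b})$ and then passing to an e.c.\ model without losing the realisation (this works only because everything in sight is positive, so a homomorphism---not an immersion---suffices); and the two uses of maximality of positive types, first to identify $\tp_N(\bar{b}')$ with $\tp_M(\bar{b})$, and then, via the support of $q$, to transport all of $q$ (not merely $\Sigma$) down to $\bar{a}^\ast$. The conceptual point worth flagging is exactly why the first-order shortcut fails and the ``absorb the parameters'' trick is forced: supportedness is the positive-logic substitute for being isolated, but it carries strictly less topological information, so $\omega$-saturation has to be established type by type rather than by a cardinality count on the type space.
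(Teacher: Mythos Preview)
Your proof is correct and follows essentially the same route as the paper: absorb the parameters into the tuple, realise the partial type together with $\tp(\bar{b})$ in some e.c.\ model, take the support $\phi$ of the resulting full type, and pull $\exists \bar{x}\,\phi(\bar{x},\bar{b})$ back to $M$. The only cosmetic difference is that the paper realises $\Sigma$ in an e.c.\ continuation $N$ of $M$ (so the same $\bar{b}$ lives in $N$ and the transfer of $\exists\bar{x}\,\phi$ is via the immersion $M\hookrightarrow N$), whereas you build $N$ by compactness and transfer via $\tp_N(\bar{b}')=\tp_M(\bar{b})$; both work.
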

\begin{proof}
Let $M$ be an e.c.\ model and let $\Sigma(\bar{x}, \bar{b})$ be finitely satisfiable in $M$, where $\bar{x}$ and $\bar{b} \in M$ are finite. Then there is a realisation $\bar{a}$ in some e.c.\ model $N$ that is an extension of $M$. Set $p(\bar{x}, \bar{y}) = \tp(\bar{a}, \bar{b})$ and let $\phi(\bar{x}, \bar{y})$ be the support of $p(\bar{x}, \bar{y})$. Then $N \models \exists \bar{x} \phi(\bar{x}, \bar{b})$ so because $M$ is e.c.\ we find $\bar{a}' \in M$ with $M \models \phi(\bar{a}', \bar{b})$. As $\phi$ supports $p$ we have that $M \models p(\bar{a'}, \bar{b})$ and hence $M \models \Sigma(\bar{a}', \bar{b})$.
\end{proof}
\begin{remark}
\thlabel{rem:finite-type-spaces}
\thref{thm:omega-categoricity} below provides several equivalent characterisations of being $\omega$-categorical for positive theories. However, compared to the analogous theorem for full first-order logic one important characterisation is missing: namely that the space of $n$-types is finite, for every $n < \omega$. As pointed out in \cite[Example 6.6]{haykazyan_spaces_2019} this is simply no longer equivalent to being $\omega$-categorical in positive logic. In fact, one easily sees that having finite type spaces is equivalent to being $\omega$-categorical and Boolean, where being Boolean follows because the complement of any positively definable set is positively definable using a finite disjunction.

The counterexample from \cite{haykazyan_spaces_2019} is quite simple, so we repeat it here. Consider the theory $T$ with constants $\{c_i\}_{i < \omega}$, asserting that $c_i \neq c_j$ for all $i \neq j$. Then $T$ has a unique e.c.\ model consisting of only interpretations for the constants. We see that $T$ is $\omega$-categorical, but each constant yields a different type, so we have infinitely many $1$-types.
\end{remark}
\begin{theorem}
\thlabel{thm:omega-categoricity}
Let $T$ be a countable theory with JEP. Then the following are equivalent:
\begin{enumerate}[label=(\roman*)]
\item $T$ is $\omega$-categorical,
\item every $n$-type is supported,
\item all e.c.\ models are atomic,
\item all countable e.c.\ models are atomic,
\item every e.c.\ model is $\omega$-saturated,
\item there is a saturated prime e.c.\ model.
\end{enumerate}
\end{theorem}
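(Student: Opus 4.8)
The plan is to establish the equivalences in a cycle, roughly (i) $\Rightarrow$ (ii) $\Rightarrow$ (iii) $\Rightarrow$ (iv) $\Rightarrow$ (i), and then slot (v) and (vi) into the cycle via the lemmas already proved. The implications among (ii)--(v) are mostly the easy part: (ii) $\Rightarrow$ (iii) is immediate once one notes that a model realising only supported types is atomic by definition, (iii) $\Rightarrow$ (iv) is trivial, and (ii) $\Rightarrow$ (v) is exactly \thref{lem:supported-n-types-implies-omega-saturated}. For (v) $\Rightarrow$ (vi) I would argue that an $\omega$-saturated countable e.c.\ model exists (build it as a union of a countable chain realising finitely many parameters at each stage, using JEP and countability of $T$ to keep it countable), and that $\omega$-saturation plus countability makes it universal and homogeneous among countable e.c.\ models, hence prime by \thref{fact:prime-iff-countable-and-atomic} once we also know it is atomic; alternatively, note that $\omega$-saturated e.c.\ models of a countable theory are atomic because any type is realised together with a witnessing saturated configuration, so one can read off a support. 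For (vi) $\Rightarrow$ (i): a prime model is countable and atomic by \thref{fact:prime-iff-countable-and-atomic}, and if additionally it is saturated then any two countable e.c.\ models embed into each other and a back-and-forth (using $\omega$-saturation on both sides, which follows once (v) is in the cycle) gives an isomorphism.

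The implication I expect to be the main obstacle is (i) $\Rightarrow$ (ii): deducing that every $n$-type is supported from $\omega$-categoricity. The natural strategy is the contrapositive. Suppose some $n$-type $p(\bar x)$ is not supported. Then for every $\phi(\bar x) \in p(\bar x)$ there is, by the failure of condition (ii) of \thref{def:supported-type}, some $\psi_\phi(\bar x) \notin p(\bar x)$ with $T$ consistent together with $\exists \bar x(\phi(\bar x)\wedge\psi_\phi(\bar x))$. I want to use this to build a countable e.c.\ model omitting $p$, and another one realising $p$, contradicting categoricity. Realising $p$ is easy (take any e.c.\ model containing a realisation, then pass to a countable e.c.\ elementary-ish substructure — more precisely, a countable e.c.\ continuation of $\langle \bar a\rangle$). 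The delicate half is constructing a countable e.c.\ model that omits $p$: one wants an omitting-types argument in positive logic, which is where the non-supportedness is exactly the hypothesis needed. I would run a Henkin-style construction of a countable structure, at each step using the witnesses $\psi_\phi$ to ensure that no tuple ends up satisfying all of $p$, while simultaneously throwing in enough existential witnesses and eventually completing to an e.c.\ model; the subtlety is checking that completing to an e.c.\ model (via \thref{fact:complete-to-ec-model}) does not reintroduce a realisation of $p$, which should follow because $p$ being non-supported means that for the tuple we built, some $\psi_\phi \notin p$ already holds and is preserved by homomorphisms.

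For (iv) $\Rightarrow$ (i), assuming all countable e.c.\ models are atomic: any two countable e.c.\ models are then atomic, and JEP plus countability of $T$ lets one run a back-and-forth between them — at each step a finite partial isomorphism is extended using that the relevant type over the current finite parameter set is supported, so its support is satisfiable on the other side by existential closedness, exactly as in the proof of \thref{lem:supported-n-types-implies-omega-saturated}. This yields an isomorphism, so there is only one countable e.c.\ model and $T$ is $\omega$-categorical. The remaining small items — fitting (v) between (ii)/(iii) and (vi), and checking (vi) $\Rightarrow$ one of the earlier conditions — are routine bookkeeping given \thref{fact:prime-iff-countable-and-atomic}, \thref{lem:supported-n-types-implies-omega-saturated} and \thref{lem:supported-type}. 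In summary, everything reduces to one genuine positive-logic omitting-types argument for (i) $\Rightarrow$ (ii); the rest is assembling the cycle from the lemmas already in hand.
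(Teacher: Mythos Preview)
Your overall architecture is reasonable, and your sketch of (i) $\Rightarrow$ (ii) via an omitting-types construction is essentially what the paper outsources to \cite{haykazyan_spaces_2019}: the paper simply cites that reference for the equivalence of (i)--(iv) and only adds (v) and (vi) to the picture. One caution on your omitting-types sketch: the step ``completing to an e.c.\ model does not reintroduce a realisation of $p$'' is not handled by the remark that some $\psi_\phi \notin p$ is preserved by homomorphisms, because that only controls tuples already present in your Henkin structure; the e.c.\ completion adds \emph{new} tuples, and those could realise $p$. The fix is to interleave the e.c.\ property into the Henkin construction itself rather than completing afterwards, which is more delicate but doable.

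The genuine gap is in how you slot (v) and (vi) into the cycle. Your (v) $\Rightarrow$ (vi) needs the countable $\omega$-saturated model to be atomic, and your ``alternatively'' claim that $\omega$-saturated e.c.\ models are automatically atomic is false: an $\omega$-saturated model realises \emph{every} type, so if any type is unsupported the model is not atomic. Your (vi) $\Rightarrow$ (i) then compounds this by invoking $\omega$-saturation ``on both sides'', i.e.\ assuming (v), which is circular since (v) has only been derived from (ii), not from (vi). The paper resolves both issues cleanly: it proves (v) $\Rightarrow$ (i) directly by back-and-forth between two countable $\omega$-saturated e.c.\ models (no atomicity needed), and for (vi) $\Rightarrow$ (iii) it observes that the prime saturated model $M$ is atomic by \thref{fact:prime-iff-countable-and-atomic}, and by JEP plus saturation $M$ realises every type, so every type is supported. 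The reverse direction (i)--(v) $\Rightarrow$ (vi) is then immediate: the unique countable e.c.\ model is atomic by (iii) and $\omega$-saturated by (v), hence prime and saturated.
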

\begin{proof}
The equivalence between (i), (ii), (iii) and (iv) is \cite[Corollary 6.4 and Theorem 6.5]{haykazyan_spaces_2019}. \thref{lem:supported-n-types-implies-omega-saturated} yields (ii) $\Rightarrow$ (v), while (v) $\Rightarrow$ (i) easily follows from back-and-forth. We are left to prove that (vi) is equivalent to properties (i) to (v).

\underline{(vi) $\Rightarrow$ (iii)} Let $M$ be prime and saturated. By \thref{fact:prime-iff-countable-and-atomic} $M$ is atomic. Let $p$ be a type that is realised in some e.c.\ model $N$. By JEP and saturation $p$ is also realised in $M$ and is thus supported. As $p$ and $N$ were arbitrary we conclude that indeed all e.c.\ models are atomic.

\underline{(i)--(v) $\Rightarrow$ (vi)} Let $M$ be the unique countable model. By (iii) $M$ is atomic and by (v) $M$ is $\omega$-saturated, and hence saturated. Finally, by \thref{fact:prime-iff-countable-and-atomic} $M$ is prime.
\end{proof}
\begin{corollary}
\thlabel{cor:bilinear-spaces-omega-categorical}
Let $K$ be any field. Every e.c.\ model of $T_K^*$ is $\omega$-saturated. If $K$ is at most countable then $T_K^*$ is $\omega$-categorical.
\end{corollary}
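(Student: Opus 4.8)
The plan is to derive both assertions from the results already in place, via the $\omega$-categoricity criterion of Section~\ref{sec:omega-categoricity}. The key observation I would record first is that \thref{cor:all-types-isolated} already tells us that, modulo $(T_K^*)^\ec$, every type in finitely many variables without parameters --- i.e.\ every $n$-type --- is equivalent to a single formula $\phi(\bar x)$. Unwinding what ``equivalent to a formula'' means here (using \thref{thm:quantifier-free-type-determines-type} to see that the formula produced there pins down not just the quantifier-free part but the whole type, so that any realisation of $\phi$ in an e.c.\ model realises the type), this is precisely condition~(i) of \thref{def:supported-type}: $\phi(\bar x)$ is a support for the type. Hence every $n$-type of $T_K^*$ is supported.

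For the first assertion I would then simply feed this into \thref{lem:supported-n-types-implies-omega-saturated}, which immediately yields that every e.c.\ model of $T_K^*$ is $\omega$-saturated. Note that this half needs no hypothesis on $K$, since \thref{cor:all-types-isolated} holds for an arbitrary field.

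For the second assertion, suppose $K$ is at most countable. Then the signature $\L_K$ is countable (the vector-space part is finite, and $K$ contributes one scalar-multiplication function symbol and one binary relation $[x,y]=\lambda$ per element), so $T_K^*$ is a countable theory; and it has JEP by \thref{cor:jep}. Since every $n$-type is supported, condition~(ii) of \thref{thm:omega-categoricity} holds, and the implication (ii)$\Rightarrow$(i) of that theorem gives that $T_K^*$ is $\omega$-categorical. I do not expect a genuinely hard step here: all the real work was done in establishing \thref{cor:all-types-isolated} (which rests on \thref{prop:formula-for-linear-independence}, \thref{thm:quantifier-free-type-determines-type} and the characterisation of e.c.\ models) and in \thref{thm:omega-categoricity}; the only points requiring attention are the bookkeeping identification of ``equivalent to a formula modulo $(T_K^*)^\ec$'' with ``supported'', and checking that the countability hypothesis of \thref{thm:omega-categoricity} is met, which reduces to $\L_K$ being countable when $K$ is.
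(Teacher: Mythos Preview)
Your proposal is correct and follows essentially the same route as the paper: invoke \thref{cor:all-types-isolated} to see that every $n$-type is supported, then apply \thref{lem:supported-n-types-implies-omega-saturated} for $\omega$-saturation and \thref{thm:omega-categoricity} (together with \thref{cor:jep} and countability of $\L_K$) for $\omega$-categoricity. The only remark is that the appeal to \thref{thm:quantifier-free-type-determines-type} in your ``unwinding'' step is superfluous --- the conclusion of \thref{cor:all-types-isolated} is already exactly condition~(i) of \thref{def:supported-type}.
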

\begin{proof}
By \thref{cor:all-types-isolated} every type in finitely many variables is supported and \thref{cor:jep} gives us JEP. So \thref{lem:supported-n-types-implies-omega-saturated} and \thref{thm:omega-categoricity} apply.
\end{proof}
\section{Comparison to different approaches}
\label{sec:comparison-to-different-approaches}
We consider two other model-theoretic approaches to certain bilinear spaces: Hilbert spaces and the two-sorted approach in full first-order logic. The former is known to be stable, while the latter is known to be non-simple (but is NSOP$_1$). This is in contrast to our main results \thref{thm:category-of-bilinear-spaces-is-simple-aecat} and \thref{thm:positive-theory-simple-unstable}, which claim simplicity and non-stability. In each of these two cases we point out precisely where the difference lies, in terms of the canonical independence relation.
\subsection{Hilbert spaces}
\label{subsec:hilbert-spaces}
From \cite[section 15]{ben-yaacov_model_2008} we know that Hilbert spaces (over the real numbers), studied in the framework of continuous logic, are stable. The canonical independence relation is given by orthogonality. So linear independence cannot be a simple independence relation on the category of Hilbert spaces, as it would have to coincide with orthogonality by canonicity. It is then natural to ask: what property fails? The answer to this question turns out to be \textsc{3-amalgamation}, as we will show in \thref{ex:linear-independence-fails-in-hilbert-spaces}.
\begin{example}
\thlabel{ex:linear-independence-fails-in-hilbert-spaces}
We write $\ind$ for linear independence (as in \thref{def:linear-independence}), and we will show that \textsc{3-amalgamation} fails for $\ind$ in the category of Hilbert spaces. We work in the Hilbert space $\R^3$ with the usual inner product as bilinear form. Let $a = (1, 0, 0)$, $a' = (0,1,0)$, $b = (1, 0, 1)$, $c = (0, 1, 1)$ and $d = (\frac{1}{2}, \frac{1}{2}, 2)$. Set $A = \linspan{ad}$, $A' = \linspan{a'd}$, $B = \linspan{bd}$, $C = \linspan{cd}$ and $D = \linspan{d}$. So we have a commuting diagram as below (ignoring the dashed arrows), where every arrow is an inclusion, except for $f$, which is defined by $f(d) = d$ and $f(a) = a'$, and then extend linearly.
\[
% https://tikzcd.yichuanshen.de/#N4Igdg9gJgpgziAXAbVABwnAlgFyxMJZARgBoAGAXVJADcBDAGwFcYkQAdDgJQD0BmEAF9S6TLnyEUAFgrU6TVuwBqw0SAzY8BIuVLF5DFm0QgAgmrFbJRfvsOKTnHgMsbx2qSVIAmB8fYAYTdNCR0ZX38lUy4+QRErMK89fiinABEQjxsUO1SaI2iQACFheRgoAHN4IlAAMwAnCABbJD0QHAgkMgUA0wALEBpGegAjGEYABWzwkEYYOpwhuawwJyh6OH6Kt0aWpB8aTqQ7ObGJ6etZ+cXdptbEQ46uxHbCpzrlkfGpmak5hZLBIgPYPU7HRA9d7sSpfc6-K7-G5LYardabbZQO77RCyZ5tAqOJBgZiMRjDeGXJLsZHYh54iEAVkJfRJZIpPypnhpgLpSGZ+MhLKKWDhnL+PNuqLW7A2Wx2wNBSAAbEcXgB2YUmNnks7ixGSoHqJWIVWCvHQxA6jkXCWmWmK+4qtUHLXE0m6762g323mOnGawUCy3WvXe6m+27+h6BiGnEMem0IiMAqOUIRAA
\begin{tikzcd}[row sep=tiny]
                                    & \R^3 \arrow[rrr, "h", dashed] &  &                              & V                            \\
A \arrow[rrr] \arrow[ru, "f"]       &                               &  & \R^3 \arrow[ru, "g", dashed] &                              \\
                                    & C \arrow[uu] \arrow[rrr]      &  &                              & \R^3 \arrow[uu, "i", dashed] \\
D \arrow[rrr] \arrow[ru] \arrow[uu] &                               &  & B \arrow[ru] \arrow[uu]      &                             
\end{tikzcd}
\]
Furthermore, we have $A \ind_D^{\R^3} B$, $B \ind_D^{\R^3} C$ and $A' \ind_D^{\R^3} C$. Noting that $A' = f(A)$ \textsc{3-amalgamation} would give us the dashed arrows such that everything commutes. We will view $i$ as a genuine inclusion and write $a^* = g(a) = h(a')$.

Set $v = 2(b+c - d)$, so in $\R^3$ this is just $(1, 1, 0)$. We calculate:
\begin{align*}
[a^* - v, a^* - v] &= [a^*, a^* - v] - [v, a^* - v] \\
&= ([a^*, a^*] - [a^*, v]) - ([v, a^*] - [v, v]) \\
&= (1 - ([a^*, 2b] + [a^*, 2c] - [a^*, 2d])) - ([v, a^*] - [v, v]) \\
&= (1 - (2 + 2 - 1)) - ([2b, a^*] + [2c, a^*] - [2d, a^*] - 2) \\
&= -2 - (2 + 2 - 1 -2) \\
&= -3,
\end{align*}
here we have used the definition of $a^*$ and commutativity of the above diagram multiple times for simplifications like $[a^*, 2b] = [g(a), i(2b)] = [g(a), g(2b)] = [a, 2b] = 2$.

So we have found an element of $V$, namely $a^* - v$, such that $[a^* - v, a^* - v] = -3$. This means that the form on $V$ is not positive definite. So $V$ cannot be a Hilbert space and we conclude that \textsc{3-amalgamation} fails for linear independence $\ind$ in the category of Hilbert spaces.
\end{example}
Of course, we could apply \thref{thm:3-amalgamation} to the diagram in \thref{ex:linear-independence-fails-in-hilbert-spaces} to find $V$ together with the dashed arrows. We will just get a bilinear form that is not positive definite. In fact, we can give an explicit description of $V$ and $a^*$ (which completely determines the diagram). We take $V = \R^4$, and let the bilinear form be defined by $[(x,y,z,w), (x',y',z',w')] = xx' + yy' + zz' - ww'$. Now we can take $a^* = (1\frac{1}{2}, 1\frac{1}{2}, -\frac{1}{2}, \frac{\sqrt{15}}{2})$. It is then straightforward to verify that this does indeed form a solution to the \textsc{3-amalgamation} problem.
\subsection{Failure of simplicity when the field varies}
\label{subsec:failure-of-simplicity-when-the-field-varies}
Bilinear spaces over some infinite field $K$ have been studied in the framework of full first-order logic by considering a two-sorted theory $T_\infty^K$: one sort for the vector space and one for the field, which has to be elementarily equivalent to $K$ in the language of rings. This comes with two disadvantages: the theory is going to be at least as complicated as the theory of the field and the field varies between different models of the theory.

Even when we restrict ourselves to algebraically closed fields, for which we write $T_\infty^\ACF$, the resulting theory will be non-simple, as was established in \cite[Proposition 7.4.1]{granger_stability_1999}. Later, in \cite[Corollary 6.4]{chernikov_model-theoretic_2016}, it was shown that $T_\infty^\ACF$ is NSOP$_1$. In terms of independence relations this means that the canonical independence relation has all properties that a simple independence relation has, except for \textsc{Base-Monotonicity}. We give an example of how precisely \textsc{Base Monotonicity} fails.
\begin{definition}
\thlabel{def:first-order-theory-of-bilinear-spaces}
Let $T_\infty^\ACF$ be the full first-order theory of an infinite dimensional non-degenerate bilinear space, either symmetric or alternating, over an algebraically closed field of characteristic other than $2$. We have two sorts, $V$ and $K$, for the vector space and the field respectively. The sort $V$ has the language of abelian groups on it, and $K$ has the language of rings on it. Furthermore, we have a function symbol $K \times V \to V$ for scalar multiplication and a function symbol $[\cdot, \cdot]: V \times V \to K$ for the bilinear form. 
\end{definition}
We use $T_\infty^\ACF$ to refer both to the symmetric and alternating version, as it makes no difference in what follows. We introduce some notation. For any set $A$ in some model $M$ we write $K(A)$ for the restriction of $A$ to the field sort, so $K(A) = A \cap K(M)$. Similarly $V(A) = A \cap V(M)$. As the field can now vary with the models we need to include it in the notation of the linear span as well: for a field $K_0 \subseteq K(M)$ we write $\linspan{A}_{K_0}$ for the $K_0$-linear span of $V(A)$.

The canonical independence relation in $T_\infty^\ACF$ was first described in \cite[Proposition 9.37]{kaplan_kim-independence_2020}, where the base was restricted to models. In \cite[Corollary 8.13]{dobrowolski_sets_2020} some corrections were made and the independence relation was extended to arbitrary sets, resulting in the following fact. We write $\dcl(X)$ for the model-theoretic definable closure of $X$ and $\ind^\ACF$ is algebraic independence.
\begin{fact}
The canonical independence relation $\ind^K$ (the $K$ here stands for \emph{Kim-independence}) for $T_\infty^\ACF$ is as follows: for $A, B, C \subseteq M$ we have $A \ind_C^{K, M} B$ if and only if $\linspan{AC}_{K(M)} \cap \linspan{BC}_{K(M)} = \linspan{C}_{K(M)}$ and $K(\dcl(AC)) \ind_{K(\dcl(C))}^{\ACF, M} K(\dcl(BC))$.
\end{fact}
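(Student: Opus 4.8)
This statement is \cite[Proposition 9.37]{kaplan_kim-independence_2020}, corrected in \cite[Corollary 8.13]{dobrowolski_sets_2020}, so strictly speaking it is a citation; I only indicate the shape of the argument one would give. Since $T_\infty^\ACF$ is NSOP$_1$ by \cite[Corollary 6.4]{chernikov_model-theoretic_2016}, it carries a canonical Kim-independence relation, and by the Kim-Pillay-style criterion for NSOP$_1$ theories it suffices to check that the displayed relation $\ind^K$ is $\mathrm{Aut}$-invariant, symmetric, monotone, of finite character, has existence and extension, and satisfies the independence theorem over models; any relation with these properties coincides with Kim-forking-independence over models, and one then propagates the description to arbitrary small sets.

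The first step is to pin down definable closure in the two-sorted language: for $A$ in a monster model $M$, the field part $K(\dcl(A))$ is, up to perfect closure in positive characteristic, the subfield of $K(M)$ generated by $K(A)$ together with all bilinear products $[v,w]$ for $v,w \in \linspan{A}_{K(A)}$, and the vector part is $\linspan{A}_{K(\dcl(A))}$. Granting this, invariance, symmetry, monotonicity and finite character of $\ind^K$ follow at once from the corresponding properties of its two ingredients: linear disjointness of spans over the (now fixed) ambient field of $M$, which behaves as in \thref{fact:linear-independence-facts}, and algebraic independence $\ind^\ACF$ in the stable theory $\ACF$. For existence and extension over a model $M$ one first chooses a conjugate of $A$ whose span is linearly disjoint from that of $B$ over $M$, and then moves it once more, using extension for $\ACF$, so that the relevant new scalars become algebraically independent from $K(\dcl(B))$ over $K(\dcl(M))$; saturation of the field sort is what allows both adjustments to be made simultaneously.

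The crux, and the step I expect to be the main obstacle, is the independence theorem over a model $M$: from $A \equiv_M A'$ with $A \ind^K_M B$, $A' \ind^K_M C$ and $B \ind^K_M C$ one must produce $A''$ with $A'' \equiv_{MB} A$, $A'' \equiv_{MC} A'$ and $A'' \ind^K_M BC$. This splits into a vector-space amalgamation, carried out essentially as in \thref{thm:3-amalgamation} (the span component being modular), and the independence theorem of $\ACF$ over the algebraically closed base $K(\dcl(M))$, available because $\ACF$ is stable and thus stationary over algebraically closed sets. The real work is making the two cohere: the field-sort elements of $\dcl(A''BC)$ include bilinear products $[v,w]$ with $v$ and $w$ lying in different parts of the amalgam, and the values of these genuinely new scalars are constrained only by bilinearity, so they must be chosen so that the induced field extension realises precisely the algebraically independent amalgam demanded on the $\ACF$ side. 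Keeping exact track of which products are forced and which are free --- and choosing the free ones compatibly --- is where \cite{dobrowolski_sets_2020} had to amend \cite{kaplan_kim-independence_2020}. Note that \textsc{Base-Monotonicity} is deliberately absent from the list: as explained above it genuinely fails here, and that failure is exactly what makes $T_\infty^\ACF$ NSOP$_1$ but not simple.
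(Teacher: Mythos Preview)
The paper gives no proof of this statement at all: it is stated as a \emph{Fact}, preceded only by the sentence ``The canonical independence relation in $T_\infty^\ACF$ was first described in \cite[Proposition 9.37]{kaplan_kim-independence_2020}, where the base was restricted to models. In \cite[Corollary 8.13]{dobrowolski_sets_2020} some corrections were made and the independence relation was extended to arbitrary sets, resulting in the following fact.'' Your proposal correctly identifies exactly these references and the role of \cite{chernikov_model-theoretic_2016}, so as far as matching the paper goes you are already done in your first sentence.

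Everything after that sentence is a bonus sketch that the paper does not attempt. As an outline it is reasonable and hits the right ingredients: the Kim--Pillay criterion for NSOP$_1$, the description of $\dcl$ in the two-sorted structure, the splitting of the independence theorem into a modular vector-space amalgamation plus the $\ACF$ independence theorem, and the observation that the delicate point is choosing the ``free'' bilinear products so that the resulting field extension matches the $\ACF$ amalgam. One small caution: your description of $K(\dcl(A))$ is slightly circular as written (you generate the field from products of vectors in $\linspan{A}_{K(A)}$, but the span one actually needs is over the field being defined), and in the cited papers this is handled by an iterated closure; this is exactly the kind of bookkeeping that made the correction in \cite{dobrowolski_sets_2020} necessary, so if you ever flesh this out it is worth being precise there.
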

\begin{example}
\thlabel{ex:failure-of-base-monotonicity-in-first-order}
We show that \textsc{Base Monotonicity} fails for $\ind^K$ in $T_\infty^\ACF$. The relation $\ind^K$ consists of two parts: linear independence in the vector space sort and algebraic independence in the field sort. The failure will take place in the algebraic independence, and comes from taking the definable closure of $AC$.

Fix some model $M = (V_0, K_0)$. Let $M \preceq N$ with $v,w \in V(N)$ be such that:
\begin{enumerate}[label=(\roman*)]
\item $v$ and $w$ are $K(N)$-linearly independent over $V_0$,
\item $[v, w] = b$ is transcendental over $K_0$,
\item $[v, v] = [w, w] = [v, x] = [x, v] = [w, x] = [x, w] = 0$ for all $x \in V_0$.
\end{enumerate}
Let $K_b$ be the smallest algebraically closed field containing $K_0 b$. Set $A = (\linspan{V_0 v}_{K_0}, K_0)$, $B = (\linspan{V_0 w}_{K_b}, K_b)$ and $C = (\linspan{V_0 w}_{K_0}, K_0)$. Each of $A, B, C$ is algebraically closed. We quickly see that $A \ind_M^{K, N} B$ and $M \subseteq C \subseteq B$. We also have that $b = [v, w] \in \dcl(AC)$ and so $K(\dcl(AC)) \nind_{K(\dcl(C))}^{\ACF, N} K(\dcl(BC))$, because $b$ is transcendental over $K_0 = K(\dcl(C))$ and by construction $b \in K(\dcl(BC))$. We thus conclude that $A \nind_C^{K,N} B$, so \textsc{Base Monotonicity} fails.
\end{example}

% References
\bibliographystyle{alpha}
\bibliography{bibfile}

% Index of all the notes
%\printindex[notes]

\end{document}